\tikzset{snake it/.style={decorate, decoration=snake}}
\theoremstyle{plain}
\newtheorem{thm}{Theorem}[section]
\newtheorem{cor}[thm]{Corollary}
\newtheorem{lem}[thm]{Lemma}
\newtheorem{prop}[thm]{Proposition}
\newtheorem{conj}[thm]{Conjecture}
\newtheorem{question}[thm]{Question}
\theoremstyle{definition}
\newtheorem{defn}[thm]{Definition}
\theoremstyle{remark}
\newtheorem{rmk}[thm]{Remark}
\newcommand{\BA}{{\mathbb{A}}}
\newcommand{\BC}{{\mathbb{C}}}
\newcommand{\BG}{{\mathbb{G}}}
\newcommand{\BQ}{{\mathbb{Q}}}
\newcommand{\BZ}{{\mathbb{Z}}}
\newcommand{\CA}{{\mathcal A}}
\newcommand{\CE}{{\mathcal E}}
\newcommand{\CF}{{\mathcal F}}
\newcommand{\CH}{{\mathcal H}}
\newcommand{\CJ}{{\mathcal J}}
\newcommand{\CL}{{\mathcal L}}
\newcommand{\CN}{{\mathcal N}}
\newcommand{\CO}{{\mathcal O}}
\newcommand{\CP}{{\mathcal P}}
\newcommand{\CU}{{\mathcal U}}
\newcommand{\Fp}{{\mathfrak{p}}}
\newcommand{\Fq}{{\mathfrak{q}}}
\newcommand{\Fr}{{\mathfrak{r}}}
\newcommand{\Ft}{{\mathfrak{t}}}
\newcommand{\FC}{{\mathfrak{C}}}
\newcommand{\FF}{{\mathfrak{F}}}
\newcommand{\FK}{{\mathfrak{K}}}
\newcommand{\FZ}{{\mathfrak{Z}}}
\newcommand{\sslash}{\mathbin{/\mkern-6mu/}}
\newcommand{\td}{{\mathrm{td}}}
\DeclareFontFamily{OT1}{rsfs}{}
\DeclareFontShape{OT1}{rsfs}{n}{it}{<-> rsfs10}{}
\DeclareMathAlphabet{\curly}{OT1}{rsfs}{n}{it}
\newcommand\Hom{\operatorname{Hom}}
\newcommand{\Chow}{\mathrm{CH}}
\newcommand{\Corr}{\mathrm{Corr}}
\let\@wraptoccontribs\wraptoccontribs
\newcommand{\newabstract}[1]{%
  \par\bigskip
  \csname otherlanguage*\endcsname{#1}%
  \csname captions#1\endcsname
  \item[\hskip\labelsep\scshape\abstractname.]
}
\begin{document}
\title[Algebraic cycles and Hitchin systems]{Algebraic cycles and Hitchin systems \\
Cycles alg\'ebriques et syst\`emes de Hitchin}
\date{\today}

\author[D. Maulik]{Davesh Maulik}
\address{Massachusetts Institute of Technology}
\email{maulik@mit.edu}

\author[J. Shen]{Junliang Shen}
\address{Yale University}
\email{junliang.shen@yale.edu}

\author[Q. Yin]{Qizheng Yin}
\address{Peking University}
\email{qizheng@math.pku.edu.cn}

\begin{abstract}
The purpose of this paper is to study motivic aspects of the Hitchin system for~$\mathrm{GL}_n$. Our results include the following. (a) We prove the motivic decomposition conjecture of Corti--Hanamura for the Hitchin system; in particular, the decomposition theorem associated with the Hitchin system is induced by algebraic cycles. This yields an unconditional construction of the motivic perverse filtration for the Hitchin system, which lifts the cohomological/sheaf-theoretic perverse filtration. (b) We prove that the inverse of the relative hard Lefschetz symmetry is induced by a relative algebraic correspondence, confirming the relative Lefschetz standard conjecture for the Hitchin system. (c) We show a strong perversity bound for the normalized Chern classes of a universal bundle with respect to the motivic perverse filtration; this specializes to the sheaf-theoretic result obtained earlier by Maulik--Shen. (d) We prove a $\chi$-independence result for the relative Chow motives associated with Hitchin systems.

Our methods combine Fourier transforms for compactified Jacobian fibrations associated with integral locally planar curves, nearby and vanishing cycle techniques, and a Springer-theoretic interpretation of parabolic Hitchin moduli spaces.

\newabstract{french}
L'objectif de cet article est d'\'etudier des aspects motiviques du syst\`eme de Hitchin pour $\mathrm{GL}_n$. Nos r\'esultats incluent les suivants. (a) Nous prouvons la conjecture de d\'ecomposition motivique de Corti--Hanamura pour le syst\`eme de Hitchin ; en particulier, le th\'eor\`eme de d\'ecomposition associ\'e au syst\`eme de Hitchin est induit par des cycles alg\'ebriques. Cela fournit une construction inconditionnelle de la filtration perverse motivique pour le syst\`eme de Hitchin, qui rel\`eve la filtration perverse cohomologique/faisceautique. (b) Nous prouvons que l'inverse de la sym\'etrie de Lefschetz difficile relative est induit par une correspondance alg\'ebrique relative, ce qui confirme la conjecture standard de Lefschetz relative pour le syst\`eme de Hitchin. (c) Nous montrons une borne de perversit\'e forte pour les classes de Chern normalis\'ees d'un fibr\'e universel par rapport \`a la filtration perverse motivique ; cela se restreint au r\'esultat faisceautique obtenu pr\'ec\'edemment par Maulik--Shen. (d) Nous prouvons un r\'esultat de $\chi$-ind\'ependance pour les motifs de Chow relatifs associ\'es aux syst\`emes de Hitchin.

Nos m\'ethodes combinent les transform\'ees de Fourier pour les fibrations jacobiennes compactifi\'ees associ\'ees aux courbes int\'egrales localement planes, des techniques de cycles proches et \'evanescents, et une interpr\'etation en termes de la th\'eorie de Springer des espaces de modules de Hitchin paraboliques.
\end{abstract}

\maketitle

\setcounter{tocdepth}{1} 

\tableofcontents
\setcounter{section}{-1}

\section{Introduction}

The topology of the Hitchin system has been intensively studied over decades. In \cite{Ngo}, Ng\^o proved the fundamental lemma of the Langlands program by reducing it to the study of the decomposition theorem \cite{BBD} for the Hitchin system; the topological mirror symmetry conjecture of Hausel--Thaddeus \cite{HT0} was proven by comparing the decomposition theorem for Hitchin systems associated with the Langlands dual groups $\mathrm{SL}_n$ and $\mathrm{PGL}_n$ \cite{GWZ, MS_HT}; the decomposition theorem for the Hitchin system further yields a perverse filtration on the singular cohomology of the Hitchin moduli space, whose interaction with the non-abelian Hodge theory is the main theme of the P=W conjecture \cite{dCHM1}, recently proven \cite{MS_PW, HMMS, MSY}; the decomposition theorem for the Hitchin system also appears naturally in enumerative geometry, particularly in the study of BPS invariants \cite{CDP, MS, KK, DHSM}.

Most of these developments relied heavily on topological and sheaf-theoretic approaches. On the other hand, hints from several recent results \cite{GShen, LW, HPL, MSY} suggest that there should be a motivic theory hidden behind the cohomological structures mentioned above. The purpose of this work is to study motivic aspects of the decomposition theorem and the perverse filtration for the Hitchin system. The upshot is that the geometry of the Hitchin system provides us powerful tools to construct algebraic cycles on the moduli of Higgs bundles, lifting many cohomological statements to the motivic level.


\subsection{Hitchin systems}\label{sec0.1}

Throughout, we work over the complex numbers $\BC$; all Chow groups are taken with rational coefficients. 

Let $\Sigma$ be a nonsingular projective curve of genus $g\geq 2$. Let $M_{n,d}$ be the moduli space of stable Higgs bundles with coprime rank $n$ and degree $d$ on $\Sigma$; it carries the structure of an integrable system, known as the \emph{Hitchin system}:
\begin{equation}\label{Hitchin}
f: M_{n,d} \to B,
\end{equation}
which is a proper and surjective morphism to an affine space $B$. We use $d_f$ to denote the relative dimension of (\ref{Hitchin}). By the decomposition theorem of Beilinson, Bernstein, Deligne, and Gabber \cite{BBD}, the derived pushforward $Rf_{*} \BQ_{M_{n,d}}\in D^\mathrm{b}_\mathrm{c}(B)$ admits a decomposition into (shifted) semisimple perverse sheaves:
\begin{equation}\label{DTHiggs}
Rf_{*}\BQ_{M_{n,d}}[\dim M_{n,d} - d_f] \simeq \bigoplus_{i=0}^{2d_f} {^\mathfrak{p}}\CH^i\left(Rf_{*}\BQ_{M_{n,d}} [\dim M_{n,d} - d_f]\right)\left[-i\right] \in D^\mathrm{b}_\mathrm{c}(B)
\end{equation}
where the semisimple summands on the right-hand side satisfy the Lefschetz symmetry
\begin{equation}\label{RHL}
\sigma^{i}: {^\mathfrak{p}}\CH^{d_f-i}\left(Rf_{*}\BQ_{M_{n,d}} [\dim M_{n,d} - d_f]\right) \xrightarrow{~~\simeq~~}{^\mathfrak{p}}\CH^{d_f+i}\left(Rf_{*}\BQ_{M_{n,d}}[\dim M_{n,d} - d_f]\right)
\end{equation}
induced by the cup-product with the $i$-th power of a relative ample class $\sigma$. The decomposition~(\ref{DTHiggs}) is an important invariant for the Hitchin system (\ref{Hitchin}) and has played a crucial role in all the stories mentioned above. For example, recent resolutions of the P=W conjecture show that the (cohomological) perverse filtration
\begin{equation}\label{perv_filtraion}
P_0 H^*(M_{n,d}, \BQ) \subset P_1 H^*(M_{n,d}, \BQ)\subset \cdots \subset  H^*(M_{n,d}, \BQ)
\end{equation}
induced by (\ref{DTHiggs}) matches the double-indexed weight filtration associated with the corresponding character variety via non-abelian Hodge theory:
\begin{equation*}\label{P=W}
P_k H^*(M_{n,d}, \BQ) = W_{2k}H^*(M_{\mathrm{char}}, \BQ).
\end{equation*}
Furthermore, the Lefschetz symmetry (\ref{RHL}) matches the \emph{curious hard Lefschetz} for the character variety \cite{HRV, Mellit}.

Our first result is that both the decomposition (\ref{DTHiggs}) and the filtration (\ref{perv_filtraion}) are motivic in a strong sense. In particular, this confirms the \emph{motivic decomposition conjecture} of Corti--Hanamura~\cite{CH} for the Hitchin system.

\begin{thm}[Motivic decomposition]\label{thm1}
There exists a decomposition of the relative diagonal into orthogonal projectors:
\[
[\Delta_{M_{n,d}/B}] = \sum_{i=0}^{2d_f} \mathfrak{r}_i, \quad\quad \mathfrak{r}_i\circ \mathfrak{r}_i = \mathfrak{r}_i, \quad \mathfrak{r}_i\circ \mathfrak{r}_j =0, \quad i\neq j,\]
which induces a sheaf-theoretic decomposition \eqref{DTHiggs}. Here $[\Delta_{M_{n,d}/B}]$ and $\mathfrak{r}_i$ lie in the Chow group of the relative product $M_{n,d} \times_B M_{n,d}$, and the composition is taken as relative correspondences.
\end{thm}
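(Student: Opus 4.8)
The plan is to construct the projectors $\mathfrak{r}_i$ by first treating the case of compactified Jacobian fibrations over the locus of integral locally planar spectral curves, and then spreading the construction out to all of $B$ via a localization/gluing argument. Over the elliptic locus $B^{\mathrm{ell}} \subset B$ parametrizing integral spectral curves, the restriction $f^{\mathrm{ell}} \colon M^{\mathrm{ell}} \to B^{\mathrm{ell}}$ is a compactified Jacobian fibration, and one has at one's disposal the relative Fourier--Mukai / Poincaré-type correspondence $\mathcal{P}$ on $M^{\mathrm{ell}} \times_{B^{\mathrm{ell}}} M^{\mathrm{ell}}$. As in the smooth abelian-scheme case (Deninger--Murre, Künnemann), one expects the convolution powers of the Chern-character components of $\mathcal{P}$ to yield a Chow--Künneth decomposition $[\Delta_{M^{\mathrm{ell}}/B^{\mathrm{ell}}}] = \sum_i \mathfrak{r}_i^{\mathrm{ell}}$ whose $i$-th piece cuts out the summand ${}^{\mathfrak p}\CH^i$; the Fourier transform simultaneously packages the relative Hard Lefschetz symmetry \eqref{RHL}, so orthogonality and idempotence of the $\mathfrak{r}_i^{\mathrm{ell}}$ come formally from the $\mathfrak{sl}_2$-action built out of $\sigma$ and its Fourier-dual. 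First I would make this step precise using the structure of the compactified Jacobian (existence of a relative $\Theta$-divisor, or at least a relative polarization, and the autoduality of compactified Jacobians of integral locally planar curves).

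Next I would extend $\mathfrak{r}_i^{\mathrm{ell}}$ across the discriminant. The key input here is that $M_{n,d} \to B$ is, in a neighborhood of a general point of each boundary stratum, modelled on a product situation amenable to nearby/vanishing cycle analysis; combined with the support theorem (Ngô, de Cataldo--Hausel--Migliorini, Maulik--Yun) which says that every simple summand of $Rf_* \BQ$ has full support $B$, one knows that a relative correspondence on $M^{\mathrm{ell}} \times_{B^{\mathrm{ell}}} M^{\mathrm{ell}}$ inducing a given map of local systems on $B^{\mathrm{ell}}$ extends uniquely (up to cycles supported over the discriminant) to a correspondence over $B$ inducing the corresponding map of perverse sheaves. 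Concretely I would take the closure of $\mathfrak{r}_i^{\mathrm{ell}}$ in $\Chow(M_{n,d} \times_B M_{n,d})$, show using the support/decomposition theorem that its cohomological realization is the genuine projector onto ${}^{\mathfrak p}\CH^i$ plus a correction living in the image of cycles from smaller strata, and then correct inductively on strata dimension. A Springer-theoretic description of the parabolic Hitchin spaces (as invoked in the abstract) is the tool for controlling these boundary corrections: it gives a geometric source of cycles realizing the needed adjustments.

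Finally I would verify the relations $\mathfrak{r}_i \circ \mathfrak{r}_i = \mathfrak{r}_i$, $\mathfrak{r}_i \circ \mathfrak{r}_j = 0$ and $\sum_i \mathfrak{r}_i = [\Delta_{M_{n,d}/B}]$ directly in the relative Chow group. The sum relation is checked by restriction to $B^{\mathrm{ell}}$ (where it holds by construction) plus the observation that the complement carries no extra classes once the support theorem is in hand; equivalently, $[\Delta] - \sum_i \mathfrak{r}_i$ is a relative cycle whose realization vanishes and which is supported over the discriminant, and one argues it is actually zero (or absorbs it into a redefinition of the $\mathfrak{r}_i$, preserving the other relations). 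Idempotence and orthogonality follow because composition of relative correspondences is compatible with realization and with restriction to the open locus, and a self-correspondence supported over a proper closed subset that realizes to zero and squares appropriately can be shown to vanish by a Noetherian induction on the discriminant stratification.

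\medskip
\noindent\textbf{Main obstacle.} The serious difficulty is the extension across the discriminant: over $B^{\mathrm{ell}}$ everything is governed by the well-understood Fourier theory for compactified Jacobians, but the relative product $M_{n,d} \times_B M_{n,d}$ is highly singular over the non-integral locus, and a priori the naive closure of $\mathfrak{r}_i^{\mathrm{ell}}$ need not be idempotent on the nose. Making the inductive correction rigorous requires precise control, via nearby and vanishing cycles together with the Springer-theoretic model of parabolic Hitchin moduli, of exactly which cycles appear over each stratum and of the functoriality of the decomposition theorem under specialization — this is where the bulk of the technical work will lie.
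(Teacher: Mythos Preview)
Your proposal rests on a premise that is false for the Hitchin system under consideration: you invoke ``the support theorem \ldots\ which says that every simple summand of $Rf_*\BQ$ has full support $B$'', but this is precisely what \emph{fails} for the untwisted ($\mathfrak{D}=0$) Hitchin system. Full support holds only for the $\mathfrak{D}$-twisted moduli with $\mathfrak{D}>0$ (Chaudouard--Laumon); for $\mathfrak{D}=0$ the supports are complicated and not fully classified (see Remark~\ref{remark1.1} and the discussion in Section~\ref{Sec0.2}). So your mechanism for extending $\mathfrak{r}_i^{\mathrm{ell}}$ across the discriminant --- take closures, then correct inductively on strata using that nothing new can appear --- does not go through. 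The paper in fact explicitly flags this: the closure of orthogonal projectors over $B^{\mathrm{ell}}$ need not be orthogonal projectors, and the boundary behaviour is exactly the obstruction the paper is written to circumvent. Your ``Noetherian induction on the discriminant stratification'' to kill correspondences with zero realization is likewise unsupported: absent full support, a self-correspondence supported over a proper closed subset can have nontrivial realization, and even when the realization vanishes there is no general principle forcing such cycles to be zero in Chow.

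The paper's route is structurally different from yours. Rather than trying to extend from the open subset $B^{\mathrm{ell}}\subset B$, it moves in a transverse direction: it introduces a one-parameter family (over $T\subset\CA$) of \emph{parabolic} Hitchin systems $h_\eta: M^{\mathrm{par}}_\eta\to W_\eta$ such that for generic $\eta$ the \emph{entire} $M^{\mathrm{par}}_\eta$ is (after an $\mathfrak{S}_n$-cover) a compactified Jacobian fibration, with no bad locus to extend over. Fourier theory then gives projectors over all of $W_\eta$. Chow specialization along $\eta\rightsquigarrow 0$ preserves the algebraic relations (semi-orthogonality, summing to the diagonal) for free, yielding projectors for $h_0: M_0^{\mathrm{par}}\to W_0$. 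Only then does one pass to $M_{n,d}$, not by restriction to an open subset but via the correspondence $\Gamma(\alpha)={^\Ft[\widetilde{M}_0]}\circ\alpha\circ[\widetilde{M}_0]$ built from the diagram~(\ref{corr}). Springer theory enters not as a source of boundary corrections but to show (Proposition~\ref{prop3.4}) that $[\widetilde{M}_0]\circ{^\Ft[\widetilde{M}_0]}$ commutes with the specialized projectors, which is what makes $\Gamma$ carry semi-orthogonality from $M_0^{\mathrm{par}}$ to $M_{n,d}$; and Lemma~\ref{lem4.1} identifies $Rf_*\BQ_{M_{n,d}}$ with the sign-isotypic piece of $Rh_{0*}\BQ_{M_0^{\mathrm{par}}}$, which is what pins down the homological realization. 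The separate general specialization result (Proposition~\ref{prop4.1}, via nearby/vanishing cycles) handles the realization step for the $\eta\rightsquigarrow 0$ degeneration. In short: your plan is ``Fourier on an open subset of $B$, then extend over the boundary''; the paper's is ``Fourier on the whole base for a deformed (parabolic) problem, then specialize the parameter and transfer by a Springer-type correspondence''. The latter never has to confront the lack of full support.
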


Our method also yields immediately the relative Lefschetz standard conjecture for the Hitchin system, which shows the algebraicity of the inverse of (\ref{RHL}).

\begin{thm}[Relative Lefschetz]\label{motivic_lefschetz}
    For any relative ample class $\sigma$ and $i > 0$, the inverse of the Lefschetz symmetry \eqref{RHL} is induced by an algebraic cycle $\mathfrak{Z}_{\sigma,i}$ on $M_{n,d} \times_B M_{n,d}$:
    \[
    \mathfrak{Z}_{\sigma,i}: {^\mathfrak{p}}\CH^{d_f+i}\left(Rf_{*}\BQ_{M_{n,d}} [\dim M_{n,d} - d_f]\right) \xrightarrow{~~\simeq~~}{^\mathfrak{p}}\CH^{d_f-i}\left(Rf_{*}\BQ_{M_{n,d}}[\dim M_{n,d} - d_f]\right).
    \]
\end{thm}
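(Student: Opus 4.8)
The plan is to deduce Theorem~\ref{motivic_lefschetz} from Theorem~\ref{thm1} together with the Fourier-transform input that enters its proof. A relative ample class $\sigma$ already defines an algebraic self-correspondence $\mathfrak{L}_\sigma \in \Chow(M_{n,d}\times_B M_{n,d})$ — the relative diagonal capped with the pullback of $\sigma$ — whose action on $Rf_*\BQ_{M_{n,d}}$ is cup product with $\sigma$. Using the projectors $\mathfrak{r}_i$ of Theorem~\ref{thm1} I would form the graded Lefschetz operator $\mathfrak{L}_0 := \sum_j \mathfrak{r}_{j+2}\circ\mathfrak{L}_\sigma\circ\mathfrak{r}_j$, again an algebraic relative correspondence; a bookkeeping of perverse degrees (only the steps raising the index by exactly two survive) shows that $\mathfrak{r}_{R_f+i}\circ\mathfrak{L}_0^{\circ i}\circ\mathfrak{r}_{R_f-i}$ induces exactly the relative Hard Lefschetz isomorphism $\sigma^i$ of \eqref{RHL}. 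It therefore suffices to exhibit, for each $i>0$, an algebraic relative cycle $\mathfrak{Z}_{\sigma,i}$ inducing the inverse of this map between the summands ${}^{\mathfrak p}\CH^{R_f-i}$ and ${}^{\mathfrak p}\CH^{R_f+i}$.

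I would first do this over the elliptic locus $B^{\mathrm{ell}}\subseteq B$ of integral spectral curves, a dense open subset since the generic spectral curve is smooth. Over $B^{\mathrm{ell}}$ the Hitchin map restricts to a compactified Jacobian fibration of integral locally planar curves, which is precisely the setting of the Fourier--Mukai equivalence used to prove Theorem~\ref{thm1}. As in Künnemann's proof of the relative Lefschetz standard conjecture for abelian schemes, this transform carries cup product with a relative ample class to a scalar multiple of the dual (Pontryagin-type) operator, and hence produces an algebraic relative correspondence $\mathfrak{W}_i$ on $M_{n,d}^{\mathrm{ell}}\times_{B^{\mathrm{ell}}}M_{n,d}^{\mathrm{ell}}$, where $M_{n,d}^{\mathrm{ell}} := f^{-1}(B^{\mathrm{ell}})$, whose action inverts $\mathfrak{L}_0^{\circ i}$ on the relevant summands over $B^{\mathrm{ell}}$.

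It then remains to extend $\mathfrak{W}_i$ over all of $B$. By the localization exact sequence for Chow groups the restriction map $\Chow(M_{n,d}\times_B M_{n,d}) \to \Chow(M_{n,d}^{\mathrm{ell}}\times_{B^{\mathrm{ell}}}M_{n,d}^{\mathrm{ell}})$ is surjective, so $\mathfrak{W}_i$ lifts to a cycle $\mathfrak{Z}_{\sigma,i}$ on the full relative product; a priori its action on perverse cohomology gives only some morphism ${}^{\mathfrak p}\CH^{R_f+i}\to{}^{\mathfrak p}\CH^{R_f-i}$ whose restriction to $B^{\mathrm{ell}}$ is the genuine inverse of $\sigma^i$. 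Here I invoke the support theorem for the $\mathrm{GL}_n$ Hitchin system \cite{Ngo}: each perverse summand ${}^{\mathfrak p}\CH^k$ is a direct sum of intersection complexes with full support $B$. Since $\Hom$ between intersection complexes is computed on any dense open locus over which they restrict to local systems, the $\Hom$-group from ${}^{\mathfrak p}\CH^{R_f+i}$ to ${}^{\mathfrak p}\CH^{R_f-i}$ agrees with its restriction to $B^{\mathrm{ell}}$; hence the morphism induced by $\mathfrak{Z}_{\sigma,i}$ over $B$ is already the inverse of $\sigma^i$, which proves the theorem. (Alternatively one could assemble all the $\mathfrak{Z}_{\sigma,i}$ from a single algebraic $\mathfrak{sl}_2$-triple via the Jacobson--Morozov formalism, but this would require extending the whole triple over $B$, which is no easier.)

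The main obstacle is the last step: controlling the lifted correspondence over the complement of $B^{\mathrm{ell}}$, i.e. over the loci of reducible or non-reduced spectral curves, where the Fourier--Mukai construction is not available. Everything there rests on the full-support statement, so one must either cite the support theorem in the precise form ``no perverse summand of $Rf_*\BQ_{M_{n,d}}$ is supported on a proper closed subset of $B$'' or reprove it using the nearby/vanishing-cycle and parabolic--Springer techniques developed for Theorem~\ref{thm1}. A minor point is that $f$ over $B^{\mathrm{ell}}$ is only a torsor under a compactified Jacobian fibration rather than such a fibration itself, but this is exactly the generality in which the relevant Fourier--Mukai transform is formulated, so Künnemann's argument transfers without essential change.
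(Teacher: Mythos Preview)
Your argument has a genuine gap at the decisive step. You claim that each perverse summand ${}^{\mathfrak p}\CH^k(Rf_*\BQ_{M_{n,d}})$ is a direct sum of intersection complexes with full support $B$, citing Ng\^o. This is false for the untwisted Hitchin system $f:M_{n,d}\to B$ considered here. As the paper states explicitly (Remark~\ref{remark1.1} and the discussion in Section~0.4), full support holds for the $\mathfrak{D}$-twisted Hitchin system only when $\mathfrak{D}>0$ (Chaudouard--Laumon); when $\mathfrak{D}=0$, de Cataldo--Heinloth--Migliorini \cite{dCHeM} exhibit strictly smaller supports indexed by partitions of $n$, and the full set of supports is not even known. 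Consequently the restriction map on $\Hom$ between perverse summands from $B$ to $B^{\mathrm{ell}}$ is \emph{not} injective in general, and your extension of $\mathfrak{W}_i$ by the localization sequence gives no control over the induced morphism on the components supported away from $B^{\mathrm{ell}}$. This is precisely the obstruction the paper highlights as the reason the ACLS approach does not apply directly.

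The paper's proof avoids this problem by never working over $B^{\mathrm{ell}}$ at all. Instead it uses the parabolic family $M^{\mathrm{par}}_T\to T$: for general $\eta\in T$ the fibration $h_\eta:M^{\mathrm{par}}_\eta\to W_\eta$ \emph{is} a compactified Jacobian fibration with full support (Fact~1, via \cite{ACLS}), so the algebraic Lefschetz inverse exists there; Proposition~\ref{prop4.1} shows that specialising to $\eta=0$ preserves the property of inducing an isomorphism on each perverse cohomology; and finally the Springer-theoretic correspondence $\Gamma$ (together with Facts~2--3 matching $\sigma$ with a compatible $\sigma_0^{\mathrm{par}}$) transports the resulting cycle on $M^{\mathrm{par}}_0$ to the desired $\mathfrak{Z}_{\sigma,i}$ on $M_{n,d}$, with Lemma~\ref{lem4.1} identifying its action as the restriction to the anti-invariant summand. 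Your fallback suggestion to ``reprove it using the nearby/vanishing-cycle and parabolic--Springer techniques'' is essentially this route, but carrying it out requires the specific constructions (the tautological presentation of $\sigma$, the compatibility \eqref{eq:piiota}, and Proposition~\ref{prop4.1}) rather than a generic extension argument.
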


Under the language of \cite{CH}, we obtain from Theorem \ref{thm1} a decomposition in the category of relative Chow motives over $B$:
\begin{equation}\label{motivic_DT}
h\left(M_{n,d}\right) = \bigoplus_{i}h_i(M_{n,d}) \in \mathrm{CHM}(B)
\end{equation}
with 
\[
h(M_{n,d})=(M_{n,d},[\Delta_{M_{n,d}/B}],0), \quad h_i(M_{n,d}) = (M_{n,d}, \mathfrak{r}_i,0)
\]
whose homological realization under the Corti--Hanamura functor 
\[
\mathrm{CHM}(B) \to D^\mathrm{b}_\mathrm{c}(B)
\]
recovers (\ref{DTHiggs}). As a consequence, we obtain a motivic perverse filtration
\begin{equation}\label{Motivic_P}
P_{\bullet} h(M_{n,d}) \subset  h(M_{n,d});
\end{equation}
see Section \ref{sec2.1}. This motivic perverse filtration specializes to both the cohomological perverse filtration (\ref{perv_filtraion}) and a perverse filtration on the Chow groups of $M_{n,d}$; the latter has not yet been much studied.

\begin{rmk}\label{rmk0}
    We conclude Section \ref{sec0.1} with some remarks.
    \begin{enumerate}
        \item[(a)] The decomposition (\ref{motivic_DT}) we constructed in Theorem \ref{thm1} specializes to a particular sheaf-theoretic isomorphism (\ref{DTHiggs}). Conversely, it is interesting to explore whether the natural sheaf-theoretic decompositions \cite{D_splitting, dC_splitting} admit motivic liftings. 
        \item[(b)] Motivated by the P=W conjecture, it is conjectured that the Hitchin system admits a multiplicative decomposition \ref{motivic_DT}, \emph{i.e.}, a motivic decomposition compatible with cup-product; we refer to \cite{BMSY,Sh} for detailed discussions. It is not clear whether the decomposition given by Theorem \ref{thm1} is multiplicative. 
        \item[(c)] A natural conjectual enhancement of Theorem \ref{thm2} is the relative Chow--Lefschetz standard conjecture for the Hitchin system, \emph{i.e.}, the Lefschetz symmetry for the motivic decomposition (\ref{motivic_DT}). Our argument does not prove that, and it is an interesting direction to explore.
    \end{enumerate}
\end{rmk}


\subsection{Motivic strong perversity of Chern classes}\label{Sect0.2}

Let $\CU$ be a universal Higgs bundle over~$\Sigma\times M_{n,d}$. After normalization, it induces a canonical Chern character
\[
\widetilde{\mathrm{ch}}(\CU): = \sum_{k}\widetilde{\mathrm{ch}}_k(\CU) \in \mathrm{CH}^*( \Sigma \times M_{n,d} ), \quad \widetilde{\mathrm{ch}}_k(\CU) \in \mathrm{CH}^k( \Sigma \times M_{n,d} )
\]
which is independent of the choice of the universal bundle $\CU$. It was proven in \cite{MS_PW} that the cohomology class $\widetilde{\mathrm{ch}}_k(\CU)$ has strong perversity $k$, \emph{i.e.}, its action on the direct image complex
\[
\widetilde{\mathrm{ch}}_k(\CU): Rf_{*} \BQ_{\Sigma \times M_{n,d}} \to Rf_{*} \BQ_{\Sigma \times M_{n,d}}[2k] 
\]
via cup-product satisfies that
\begin{equation}\label{strong_perversity}
\widetilde{\mathrm{ch}}_k(\CU):{^\mathbf{p}\tau_{\leq i} Rf_* \BQ_{\Sigma \times M_{n,d}}} \to  \left( {^\mathbf{p}\tau_{\leq i+k}} Rf_* \BQ_{\Sigma \times M_{n,d}}\right)  [2k] .
\end{equation}
Here, we also use $f$ to denote the product morphism $\Sigma \times M_{n,d} \rightarrow \Sigma \times B$ and consider the corresponding perverse filtration.
Specializing to global cohomology, the equation (\ref{strong_perversity}) yields
\[
\widetilde{\mathrm{ch}}_k(\CU) : P_iH^*(\Sigma \times M_{n,d}, \BQ) \to P_{i+k}H^{*+2k}(\Sigma \times M_{n,d}, \BQ),
\]
which further implies the P=W conjecture; we refer to \cite[Section 2]{MS_PW} for details on the reduction.

In this paper, we show that the sheaf-theoretic strong perversity (\ref{strong_perversity}) for Chern classes can be lifted motivically using the motivic perverse filtration (\ref{Motivic_P}).

\begin{thm}[Motivic strong perversity of Chern classes]\label{thm2}
We have for any $i,k\in \BZ$:
\[
\widetilde{\mathrm{ch}}_k(\CU): P_ih(\Sigma \times M_{n,d}) \to P_{i+k}h(\Sigma \times M_{n,d})(k) \in \mathrm{CHM}(\Sigma \times B).
\]
Here the action is given by cup-product and $(k)$ stands for the $k$-th Tate twist.
\end{thm}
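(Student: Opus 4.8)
The strategy parallels the sheaf-theoretic argument of \cite{MS_PW}, with relative Chow motives in place of perverse sheaves and Beauville-type cycle-class computations in place of cohomological ones.

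\emph{Disposing of the factor $\Sigma$.} Since $\Sigma\times M_{n,d}\to\Sigma\times B$ is $\mathrm{id}_\Sigma\times f$ with $\mathrm{id}_\Sigma$ finite, the motivic perverse filtration on $h(\Sigma\times M_{n,d})$ over $\Sigma\times B$ is $h(\Sigma)\otimes P_\bullet h(M_{n,d})$, with projectors $\Delta_\Sigma\times\mathfrak r_i$ built from the Chow--Künneth decomposition $h(\Sigma)=\mathbf 1\oplus h^1(\Sigma)\oplus\mathbf 1(-1)$ and the $\mathfrak r_i$ of Theorem~\ref{thm1}. Decomposing $\widetilde{\mathrm{ch}}_k(\CU)$ along the Künneth components of $\Sigma$ produces three tautological classes on $M_{n,d}$: the restriction $\widetilde{\mathrm{ch}}_k^{[0]}(\CU)\in\mathrm{CH}^k(M_{n,d})$ to $\{\mathrm{pt}\}\times M_{n,d}$, the integral $\widetilde{\mathrm{ch}}_k^{[2]}(\CU)\in\mathrm{CH}^{k-1}(M_{n,d})$ along $\Sigma$, and an $h^1(\Sigma)$-valued class. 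Cup product by a class pulled back from an $h^j(\Sigma)$-summand does not change perverse levels in the $M_{n,d}/B$ direction (that direction is finite), so the theorem reduces to the assertion that cup product by each of these tautological classes on $h(M_{n,d})\in\mathrm{CHM}(B)$ raises $P_\bullet$ by at most its codimension; the $\mathbf 1(-1)$ coming from $h^2(\Sigma)$ accounts for the remaining Tate twist in the comparison with the statement.

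\emph{The Fourier/Beauville mechanism over the integral locus.} Let $B^\diamond\subset B$ be the open locus of integral spectral curves, with dense open $B^{\mathrm{ell}}$ of smooth ones. Over $B^{\mathrm{ell}}$ the map $f$ is a torsor under an abelian scheme, the motivic perverse filtration on $h(M_{n,d}|_{B^{\mathrm{ell}}})$ is the Deninger--Murre grading by eigenvalues of $[n]^*$, and for $\alpha\in\mathrm{CH}^c_{(s)}$ the operator $\cup\alpha$ shifts this grading by $2c-s$; thus it suffices to show that each tautological class of codimension $c$ lies in Beauville degree $c$. This is precisely what the normalization buys: writing $\CU|_{\{\mathrm{pt}\}\times M_{n,d}}=\bigoplus_{j=1}^n\ell_j$ over $B^{\mathrm{ell}}$, one has $\widetilde{\mathrm{ch}}_k^{[0]}(\CU)=\tfrac1{k!}\sum_j\xi_j^{\,k}$ with $\xi_j=c_1(\ell_j)-\tfrac1n\sum_m c_1(\ell_m)$, and the averaging cancels the common Beauville-degree-$0$ (theta-like) component, so $\xi_j\in\mathrm{CH}^1_{(1)}$ and $\xi_j^{\,k}\in\mathrm{CH}^k_{(k)}$; the other tautological classes are treated the same way. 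To pass from $B^{\mathrm{ell}}$ to $B^\diamond$ I would substitute the Fourier transform for compactified Jacobian fibrations of integral locally planar curves developed earlier in the paper for the abelian-scheme formalism: the Poincaré sheaf replaces the Poincaré bundle, it furnishes the analogue over $B^\diamond$ of the Deninger--Murre grading, and one checks that the normalized Chern classes remain concentrated in the correct graded piece through the singular fibres.

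\emph{Globalization, and the main obstacle.} Since the motivic decomposition of Theorem~\ref{thm1} has no summands supported on $B\setminus B^\diamond$, the vanishing of $\mathfrak r_b\circ\bigl(\cup\,\widetilde{\mathrm{ch}}_k^{[\bullet]}(\CU)\bigr)\circ\mathfrak r_a$ for $b>a+k$, once known over $B^\diamond$, propagates to $B$; equivalently, one reruns the extension argument underlying Theorem~\ref{thm1} (nearby and vanishing cycles, the Springer-theoretic description of parabolic Hitchin spaces) with the universal bundle present. I expect the main difficulty to be exactly this last half of the second and third steps carried out at the level of Chow groups rather than cohomology: controlling the normalized Chern character of the Poincaré \emph{sheaf} across the singular fibres of the compactified Jacobian fibration via Arinkin's autoduality and the paper's Fourier formalism, where the clean Deninger--Murre/Beauville input is not available, and then transporting the resulting cycle-level vanishings off $B^\diamond$ compatibly with $\CU$. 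A subsidiary point is to match the normalization of $\widetilde{\mathrm{ch}}(\CU)$ used here with the one in \cite{MS_PW}, so that Theorem~\ref{thm2} genuinely refines \eqref{strong_perversity} under homological realization.
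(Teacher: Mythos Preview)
Your globalization step rests on a false premise: the claim that ``the motivic decomposition of Theorem~\ref{thm1} has no summands supported on $B\setminus B^\diamond$'' is incorrect. As discussed in Remark~\ref{remark1.1} and Section~\ref{Sec0.2}, the full support theorem \emph{fails} for the Hitchin system $f:M_{n,d}\to B$ in the untwisted ($\mathfrak{D}=0$) case; by \cite{dCHeM} there are summands supported on loci of non-integral spectral curves, indexed by nontrivial partitions of $n$. (Full support holds only for $\mathfrak{D}>0$.) Hence a cycle-level vanishing over your $B^\diamond$ --- the paper's $B^{\mathrm{ell}}$ --- does not propagate to $B$ by any support argument, and the paper gives no mechanism for extending relations of algebraic cycles directly from $B^{\mathrm{ell}}$ to $B$; indeed, the difficulty of doing so is precisely what motivates the detour through parabolic moduli. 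Even setting aside the support issue, propagating a Chow-level vanishing from a dense open to the whole base is not automatic.

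The paper's proof (Section~\ref{sec4.3.3}) is organized differently and sidesteps this problem. It never restricts to $B^{\mathrm{ell}}\subset B$. Instead it passes to the parabolic Hitchin system $h_\eta: M^{\mathrm{par}}_\eta\to W_\eta$: for generic residue $\eta$ \emph{every} spectral curve over $W_\eta$ is integral (Section~\ref{Sec1.2}(a)), so the compactified-Jacobian Fourier machinery applies over the entire base $W_\eta$, and Proposition~\ref{Prop2.6} with Theorem~\ref{thm2.3}(a) and Corollary~\ref{cor2.4} yields the perversity bound there (Theorem~\ref{thm2.8}). One then specializes to $\eta=0$ (Theorem~\ref{thm2.11}) and transports the relation to $M_{n,d}$ via the Springer-type correspondence $\Gamma$ of Section~\ref{sec1.3}. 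The key observation at this last step is that the normalized Chern characters on $\Sigma\times M_{n,d}$ and $\Sigma\times M_0^{\mathrm{par}}$ are both pulled back from $\Sigma\times M^{\mathrm{mero}}$, giving $\pi^*\widetilde{\mathrm{ch}}(\CU)=\iota^*\widetilde{\mathrm{ch}}(\CU)$; combined with Proposition~\ref{prop3.4} this produces the identity (\ref{match1}) and reduces the desired relation $\mathfrak{q}_{i+k+1}\circ\Delta_*\widetilde{\mathrm{ch}}_k(\CU)\circ\mathfrak{p}_i=0$ on $M_{n,d}$ to the one already established on $M_0^{\mathrm{par}}$. The K\"unneth decomposition of $h(\Sigma)$ plays no role.
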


\subsection{Motivic $\chi$-independence}

Now we consider two degrees $d,d'$ which are both coprime to $n$. The Hitchin systems associated with $M_{n,d}, M_{n,d'}$ share the same base,
\[
f_d: M_{n,d} \to B, \quad f_{d'}: M_{n,d'} \to B.
\]
Here we use $f_\bullet$ to indicate the dependence on the degree.

The following result was predicted by the Hausel--Thaddeus conjecture on the topological mirror symmetry \cite{HT0}, and was first proven by Groechenig--Wyss--Ziegler \cite{GWZ} for perverse sheaves via point counting over finite fields and Maulik--Shen \cite{MS} for Hodge modules using support theorem and the vanishing cycle functor:
\begin{equation}\label{coh_chi}
Rf_{d*} \BQ_{M_{n,d}} \simeq Rf_{d'*} \BQ_{M_{n,d'}}.
\end{equation}
This equation is an instance of the \emph{$\chi$-independence} phenomenon: the moduli space of Higgs bundles can be viewed as the moduli space of $1$-dimensional stable sheaves properly supported on the surface $T^*\Sigma$ in the curve class $n[\Sigma]$, and (\ref{coh_chi}) implies that the decomposition theorem is independent of the choice of the Euler characteristic $\chi\left(= \mathrm{deg} +(1-g)n\right)$ of the sheaves. This phenomenon has tight connections to enumerative geometry \cite{Toda}, and has been generalized to singular moduli spaces using cohomological Donaldson--Thomas theory \cite{MS, KK}. Recently, Hoskins and Pepin Lehalleur \cite{HPL} proved the $\chi$-independence of the Chow motives of the moduli of Higgs bundles:
\begin{equation}\label{chow_chi}
h(M_{n,d}) = h(M_{n,d'}) \in \mathrm{CHM}(\mathrm{pt}).
\end{equation}

We prove a motivic $\chi$-independence result
for the relative Chow motives over $B$, which enhances both the sheaf-theoretic result (\ref{coh_chi}) and the motivic result (\ref{chow_chi}).

\begin{thm}[Motivic $\chi$-independence]\label{thm3}
Assume $(n,d)=(n,d')=1$, we have an isomorphism of relative Chow motives:
\begin{equation}\label{iso_CM}
h(M_{n,d}) \simeq h(M_{n,d'}) \in \mathrm{CHM}(B)
\end{equation}
preserving the motivic perverse filtrations
\[
P_kh(M_{n,d}) \simeq P_kh(M_{n,d'})\in \mathrm{CHM}(B).
\]
\end{thm}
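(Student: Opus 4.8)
The plan is to upgrade the cohomological $\chi$-independence \eqref{coh_chi} to an isomorphism of relative Chow motives by exhibiting an invertible relative correspondence and passing it through the motivic decomposition of Theorem \ref{thm1}. Write $\mathfrak{r}_i^{(d)}$, $\mathfrak{r}_i^{(d')}$ for the perverse projectors attached by Theorem \ref{thm1} to $f_d$ and $f_{d'}$. It suffices to produce mutually inverse relative correspondences $\Gamma\in\mathrm{CH}(M_{n,d}\times_B M_{n,d'})$ and $\Gamma^{-1}\in\mathrm{CH}(M_{n,d'}\times_B M_{n,d})$, each intertwining $\mathfrak{r}_i^{(d)}$ with $\mathfrak{r}_i^{(d')}$ for every $i$: such a $\Gamma$ carries $P_k h(M_{n,d})=\bigoplus_{i\le k}h_i(M_{n,d})$ into $P_k h(M_{n,d'})$, and $\Gamma^{-1}$ carries it back, so $\Gamma$ induces the isomorphism \eqref{iso_CM} together with the compatibility with the motivic perverse filtration \eqref{Motivic_P}. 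Thus the whole content is the construction of such a $\Gamma$.

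The construction starts over the elliptic locus $B^{\circ}\subseteq B$ parametrizing integral spectral curves; put $M_{n,d}^{\circ}:=f_d^{-1}(B^{\circ})$. Over $B^{\circ}$ the spectral correspondence realizes $M_{n,d}^{\circ}$ and $M_{n,d'}^{\circ}$ as compactified Jacobian fibrations of two degrees of the universal integral locally planar spectral curve, and the coprimality hypotheses $(n,d)=(n,d')=1$ furnish universal rank-one sheaves on these fibrations. Adapting the Fourier--Mukai arguments behind \eqref{coh_chi} \cite{GWZ, MS} --- Arinkin's relative Fourier--Mukai transform for compactified Jacobians of integral locally planar curves, twisted by the universal sheaves (and the canonical classes $\widetilde{\mathrm{ch}}(\CU)$ of Theorem \ref{thm2}) so as to match degrees, and then post-composed with the inverse of relative Hard Lefschetz, which is algebraic by Theorem \ref{motivic_lefschetz}, to correct the perversity shift introduced by the transform --- produces mutually inverse relative correspondences $\Gamma^{\circ}$ and $\Psi^{\circ}$ over $B^{\circ}$ that realize \eqref{coh_chi} over $B^{\circ}$ and respect the perverse gradings; after a further correction by the projectors $\mathfrak{r}_i$ we may assume they intertwine them.

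It remains to propagate this from $B^{\circ}$ to all of $B$, which I expect to be the crux. The localization exact sequence for the Chow groups of the relative products lets us lift $\Gamma^{\circ}$ and $\Psi^{\circ}$ to classes $\Gamma,\Psi$ over $B$; replacing them by $\sum_i\mathfrak{r}_i^{(d')}\circ\Gamma\circ\mathfrak{r}_i^{(d)}$ and $\sum_i\mathfrak{r}_i^{(d)}\circ\Psi\circ\mathfrak{r}_i^{(d')}$ we may assume they intertwine the projectors, so that all the correspondences below are governed piece by piece by the perverse decomposition. The $\mathrm{GL}_n$ Hitchin system has full support (Ng\^o, Chaudouard--Laumon, de Cataldo), hence each perverse cohomology sheaf ${}^{\mathfrak p}\CH^i\big(Rf_{d*}\BQ_{M_{n,d}}[\dim M_{n,d}-R_f]\big)$ is a direct sum of intersection complexes of local systems on a dense open of $B$, and a morphism between two such perverse sheaves is determined by its restriction to any dense open. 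Applied in each perverse degree, this shows on the one hand that the realization of $\Gamma$, being an isomorphism over $B^{\circ}$, is an isomorphism over all of $B$, and on the other that the cycles $u:=\Psi\circ\Gamma-[\Delta_{M_{n,d}/B}]$ and $u':=\Gamma\circ\Psi-[\Delta_{M_{n,d'}/B}]$, which are supported over the boundary $B\smallsetminus B^{\circ}$, are homologically trivial. To conclude one must upgrade ``homologically trivial'' to \emph{nilpotent}: then $\Psi\circ\Gamma=[\Delta_{M_{n,d}/B}]+u$ and $\Gamma\circ\Psi=[\Delta_{M_{n,d'}/B}]+u'$ are invertible, so $\Gamma$ has the two-sided inverse $([\Delta_{M_{n,d}/B}]+u)^{-1}\circ\Psi$, which again intertwines the projectors, and Theorem \ref{thm3} follows.

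The nilpotence of the boundary corrections $u,u'$ is where the real difficulty lies, and it is the step I expect to be the main obstacle, since localization only produces \emph{some} lift. I would attack it by induction on the stratification of $B$: over a boundary stratum the Hitchin system degenerates, via nearby and vanishing cycles, into a product of Hitchin systems of smaller rank, for which the analogue of Theorem \ref{thm3} --- hence the vanishing of the associated boundary correspondence --- is available inductively; alternatively, one invokes a finite-dimensionality (Kimura--O'Sullivan type) property of the relative Chow motive $h(M_{n,d})$ over $B$, under which every homologically trivial endomorphism is automatically nilpotent. By contrast, the construction over $B^{\circ}$ is essentially a relative, motivic repackaging of the Fourier--Mukai input already behind \eqref{coh_chi}, and the compatibility with the motivic perverse filtration is formal once $\Gamma$ commutes with the $\mathfrak{r}_i$.
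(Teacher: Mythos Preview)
Your argument contains a fatal factual error at its crux. You assert that ``the $\mathrm{GL}_n$ Hitchin system has full support (Ng\^o, Chaudouard--Laumon, de Cataldo)'', and you use this to propagate the isomorphism from $B^\circ$ to $B$ by appealing to the fact that morphisms between intersection complexes supported on $B$ are determined on any dense open. But this is precisely the point where the untwisted case $\mathfrak{D}=0$ treated in Theorem~\ref{thm3} differs from the meromorphic case: full support \emph{fails} for $f:M_{n,d}\to B$ (see Remark~\ref{remark1.1} and the discussion in Section~\ref{Sec0.2}); there are genuine smaller supports over $B\setminus B^{\mathrm{ell}}$, classified in part by \cite{dCHeM}. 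Hence a morphism of perverse cohomology sheaves is \emph{not} determined by its restriction to $B^\circ$, your homological control of the boundary corrections $u,u'$ evaporates, and the nilpotence step has no input to feed on. The extension-from-$B^\circ$ strategy is in fact exactly the approach the paper flags as blocked for this reason.

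The paper circumvents the extension problem altogether rather than solving it. Instead of working over $B^\circ\subset B$, it passes to the parabolic moduli space $M^{\mathrm{par}}_\eta$ for \emph{generic} residue $\eta$, where the base $W_\eta$ consists entirely of integral spectral curves; there the Fourier transform yields the isomorphism $\mathfrak{C}_{e,e'}$ of Theorem~\ref{thm2.9} on the nose, with no boundary. One then specializes $\eta\rightsquigarrow 0$ (Theorem~\ref{thm2.12}), which preserves cycle relations, and transfers the resulting correspondences on $M_0^{\mathrm{par}}$ to $M_{n,d}$ via the Springer-theoretic operator $\Gamma$ of Section~\ref{sec1.3}, using Proposition~\ref{prop3.4} to see that $\Gamma$ respects compositions of such correspondences. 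Your Fourier--Mukai construction over the elliptic locus is the right local ingredient, but the globalization mechanism is not localization plus nilpotence; it is the parabolic--specialization--Springer route.
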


\begin{rmk}
\begin{enumerate}
    \item[(a)] To the best of our knowledge, the isomorphism of the relative Chow motives~(\ref{iso_CM}) (even without the motivic perverse filtrations) was not known before. After pushing to a point, this recovers the isomorphism (\ref{chow_chi}) of Hoskins and Pepin Lehalleur.
    \item[(b)] By Proposition \ref{prop2.2}, we obtain that $f_d: M_{n,d} \to B$ and $f_{d'}: M_{n,d'}\to B$ have isomorphic motivic decompositions; \emph{i.e.}, there exists motivic decompositions
    \[
    h(M_{n,d}) = \bigoplus_i h_i(M_{n,d}), \quad h(M_{M_{n,d'}}) = \bigoplus_i h_i(M_{n,d'})
    \]
    satisfying that \[
    h_i(M_{n,d}) \simeq h_i(M_{n,d'}) \in \mathrm{CHM}(B).
    \]
\end{enumerate}

\end{rmk}

\subsection{Background and relations to other work}
In this section, we discuss briefly some background and relations to some previous work.

\subsubsection{Motivic decomposition conjecture}
Let $f: X \to B$ be a proper morphism of nonsingular varieties. The {motivic decomposition conjecture} of Corti--Hanamura \cite{CH} predicts that in general the sheaf-theoretic decomposition
\[
Rf_*\BQ_X[\dim X - d_f] \simeq \bigoplus_{i=0}^{2d_f} {^\mathfrak{p}}\CH^i\left(Rf_*\BQ_X[\dim X - d_f] \right)[-i], \quad d_f:= \dim X\times_B X - \dim X
\]
admits a \emph{motivic} lifting. More precisely, as in the statement of Theorem \ref{thm1}, they conjectured that there is a decomposition of the relative diagonal cycle into orthogonal projectors
\[
[\Delta_{X/B}] = \sum_{i=0}^{2d_f} \mathfrak{r}_i, \quad\quad \mathfrak{r}_i\circ \mathfrak{r}_i = \mathfrak{r}_i, \quad \mathfrak{r}_i\circ \mathfrak{r}_j =0, \quad i\neq j,
\]
which induces a sheaf-theoretic decomposition as above. In general, the conjectural existence of the motivic decomposition is wide open. We note two major challenges in constructing such a decomposition. First, in the special case when $X$ is a nonsingular projective variety and $B$ is a point, the motivic decomposition conjecture is equivalent to the existence of a Chow--K\"unneth decomposition for $X$. This was conjectured by Murre \cite{Murre}, and has only been verified in very limited cases. Second, even if we are able to find orthogonal projectors over a Zariski dense open subset $U\subset B$ (\emph{e.g.}~the locus where $f$ is smooth), it is unclear if these projectors over $U$ can be extended to orthogonal projectors over $B$. In particular, the cycles given by the Zariski closures of the orthogonal projectors over $U$ are not orthogonal projectors in general, and understanding the behaviour of the ``boundary" is crucial.

Recently there have been some attempts to apply the theory of motivic intermediate extension of Wildeshaus to construct motivic decompositions for certain proper morphisms with \emph{Tate fibers}; we refer to \cite{CDN} and references therein for more details.

\subsubsection{Derived equivalences} \label{Sec0.2}

For a certain class of examples, one approach to the motivic decomposition conjecture is to use symmetries of the derived category of coherent sheaves. It was first found by Beauville \cite{B} and Deninger--Murre \cite{DM} over 35 years ago that descending Mukai's Fourier transform \cite{Mukai} from the derived category of coherent sheaves to algebraic cycles yields a motivic decomposition for an abelian scheme $f: A\to B$. Later, the Fourier transform was further used by K\"unnerman \cite{Kun} to construct a Lefschetz decomposition. In this case the morphism $f$ is smooth and each summand appeared in the decomposition theorem is a (shifted) semisimple local system.

The case of abelian fibrations with singular fibers is more complicated, and the perverse $t$-structure comes into play. On the derived category side, Arinkin \cite{A2} extended the Fourier--Mukai transform to compactified Jacobian fibrations associated with a family of projective integral locally planar curves. Arinkin's work was motivated by an attempt to understanding the classical limit of the geometric Langlands correspondence for Hitchin systems \cite[Section~4.5]{A2}. Recently, motivated by the P=W conjecture for Hitchin moduli spaces \cite{dCHM1, dCMS,MS_PW, HMMS}, a theory of Fourier transform for relative Chow motives was established in \cite{MSY} using the Arinkin sheaf. As a consequence of the Fourier theory and Ng\^o's support theorem \cite{Ngo}, it was obtained in \cite{MSY} that the motivic decomposition conjecture holds for compactified Jacobians fibrations associated with projective integral locally planar curves. 
We note that, for the singular fibers, this approach matches the complexity of the Arinkin sheaf with the complexity of intermediate extensions of the local systems.

\subsubsection{Hitchin systems}
For the Hitchin system (\ref{Hitchin}), the techniques described above imply the motivic decomposition conjecture for the Hitchin system over a Zariski open subset $B^{\mathrm{ell}} \subset B$, called \emph{the elliptic locus}, formed by integral spectral curves. This open subset is exactly the locus over which the Hitchin fibers are integral. However, to the best of our knowledge, it is not yet known how to extend Arinkin's derived equivalence \cite{A2} from the elliptic locus to the total Hitchin base $B$. Even worse, the full support theorem \emph{fails} for (\ref{Hitchin}); the results of~\cite{dCHeM} suggest that the supports of the decomposition theorem associated with (\ref{Hitchin}) are very complicated and there is no complete description by far. In view of the approach of \cite{ACLS}, the lack of the control of the supports provides also the difficulty in proving the relative Lefschetz standard conjecture. The main point of this paper is to give a proof of these conjectures for the Hitchin system~(\ref{Hitchin}) in the absence of the aforementioned geometric ingredients.

\subsubsection{Idea of the proof of Theorem \ref{thm1}}

The strategy for overcoming these difficulties is to interpret the Hitchin moduli space as a global analogue of a Lie algebra, thereby enabling the use of tools from Springer theory. Rather than extending the motivic decomposition from~$B^{\mathrm{ell}}$, we instead consider alternative Hitchin moduli spaces associated with certain parabolic structures, where all the Hitchin fibers are integral. The correspondences arising from Springer theory yield algebraic cycles that relate $M_{n,d}$ to the parabolic moduli space. Consequently, the motivic decomposition for $f: M_{n,d} \to B$ can be derived from that of the parabolic Hitchin system. Finally, to verify that the algebraic cycles we construct yields the desired homological realization, we combine techniques of nearby and vanishing cycles with Springer theory. As a byproduct, we show that Corti--Hanamura's motivic decomposition conjecture is preserved under specialization; see Corollary \ref{cor4.2}.

Springer theory has been applied to the study of moduli spaces of Higgs bundles and character varieties in connection with various questions and conjectures; see \cite{DGT, Mellit, MS_PW, HMMS}.

\subsection{Acknowledgements}

J.S.~gratefully acknowledges the hospitality of both the math department of MIT and the Isaac Newton Institute at Cambridge during his stay in the spring of 2024 where part of this work was completed.

D.M.~was supported by a Simons Investigator Grant.
J.S.~was supported by the NSF grant DMS-2301474, a Sloan Research Fellowship, and a Simons Fellowship during his visit at the Isaac Newton Institute.

\section{Moduli of Higgs bundles}\label{Section1}

In this section, we review the geometry of the moduli of Higgs bundles. Throughout, we fix a nonsingular projective curve $\Sigma$ of genus $g\geq 2$, and fix two integers $n, d$ with $(n,d)=1$.


\subsection{Moduli spaces}

We denote by $M_{n,d}$ the moduli space of stable Higgs bundles 
\[
(\CE, \theta), \quad \theta: \CE \to \CE\otimes \omega_\Sigma, \quad \mathrm{rk}(\CE) = n, \quad \mathrm{deg}(\CE) = d,
\]
where the stability condition is with respect to the slope $\mu(\CE,\theta) = \mathrm{deg}(\CE)/\mathrm{rk}(\CE)$. The moduli space admits a proper morphism to an affine space
\begin{equation}\label{map_f}
f: M_{n,d} \to B:= \bigoplus_{i=1}^n H^0\left(\Sigma, \omega_\Sigma^{\otimes i} \right).
\end{equation}
This map is given by calculating the characteristic polynomial of the Higgs field $\theta$:
\[
f(\CE, \theta) = (\mathrm{tr}(\theta), \mathrm{tr}(\wedge^2 \theta), \cdots, \mathrm{det}(\theta)) \in B,
\]
and is known as the \emph{Hitchin system}. The moduli space $M_{n,d}$ is a holomorphic symplectic variety and the Hitchin system $f$ is a Lagrangian fibration \cite{Hit, Hit1}. Later, when we need to consider the moduli spaces $M_{n,d}$ with different $d$, we will write $f_d$ to indicate the dependence of the Hitchin system on the degree.

By the Beauville--Narasimhan--Ramanan correspondence \cite{BNR}, the Hitchin base $B$ can be viewed as the parameter space of spectral curves; these are properly supported curves in the open surface $\mathrm{Tot}_\Sigma(\omega_\Sigma)$ that are degree $n$ covers of the $0$-section $\Sigma$. We define the elliptic locus~$B^{\mathrm{ell}} \subset B$ to be the open subset formed by integral spectral curves. The results of \cite[Theorem 0.3 and Section 4]{MSY} thus establish the motivic decomposition for the restricted Hitchin system
\[
f^{\mathrm{ell}}: M^{\mathrm{ell}}_{n,d} \to B^{\mathrm{ell}}.
\]

As we commented in Section \ref{Sec0.2}, at the level of algebraic cycles it is not easy to extend orthogonal projectors from $B^{\mathrm{ell}}$ to $B$. A plausible approach is to extend first Arinkin's Fourier--Mukai transform from $M^{\mathrm{ell}}_{n,d}$ to the total space $M_{n,d}$, and then extend the Fourier theory of relative Chow motives of \cite{MSY} as well. Even if we are able to achieve this, a potential difficulty is the complexity of the supports (see Remark \ref{remark1.1} below) which makes it hard to calculate the homological realization of the projectors. 

Instead, our approach in this paper is to first construct orthogonal projectors for certain parabolic Hitchin moduli space which we review in the following sections; then we show that these projectors induce the desired orthogonal projectors of the original Hitchin system. This path of reduction using parabolic Hitchin moduli spaces was applied in proving the P=W conjecture: it was first used in the proof of Hausel--Mellit--Minets--Schiffmann \cite{HMMS}, and was later also used in the proof of Maulik--Shen--Yin \cite[Section 5.4]{MSY}.

\begin{rmk}\label{remark1.1}
We can also consider the moduli space of stable $\mathfrak{D}$-twisted Higgs bundles 
\[
(\CE, \theta), \quad \theta: \CE \to \CE \otimes \omega_\Sigma(\mathfrak{D})
\]
as in \cite{CL} with $\mathfrak{D} \geq 0$ an effective divisor. The case of $\mathfrak{D}=0$ recovers the Hitchin system~(\ref{map_f}) we considered above. We note that the decomposition theorem for the Hitchin system behaves very differently when $\mathfrak{D}=0$ and $\mathfrak{D}>0$. For $\mathfrak{D}>0$, Chaudouard--Laumon \cite{CL} showed that every simple summand that appeared in the decomposition theorem has full support $B$. This full support property has been generalized to singular moduli spaces without coprime assumption on $n$ and $d$ \cite{MS} and has many applications \cite{MS_HT, GShen, HPL, MS_PW}. However, when $D=0$, the supports for the Hitchin system are very different. In \cite{dCHeM}, the authors classified all the supports over an open subset of the Hitchin base formed by \emph{reduced} spectral curves. However it is not well understood whether there are supports outside this open subset. A question raised by Mauri--Migliorini \cite[Question 1.5]{MM} asks if the full support property holds for the singular Hitchin moduli space of rank $n$ and degree $d=0$. In Section \ref{sec4.3.5}, we will discuss briefly how to extend the results of this paper to the $\mathfrak{D}$-twisted case.
\end{rmk}

\subsection{Parabolic Higgs bundles}\label{Sec1.2}

In this section, we review the parabolic Hitchin moduli space (\emph{c.f.}~\cite[Section 8.4]{HMMS}). Our purpose is to describe a smooth family of moduli spaces, connecting the moduli of strongly parabolic Higgs bundles with nilpotent residue (which has complicated spectral curves) to a certain moduli space of parabolic Higgs bundles with general residue (which only has integral spectral curves).

We fix a point $p\in \Sigma$. Denote by ${M}^{\mathrm{par}}$ the moduli space of stable parabolic Higgs bundles~$(\CE, \theta, F_p)$, where $(\CE,\theta)$ is a meromorphic Higgs bundle
\[
(\CE, \theta), \quad \theta: \CE \to \CE\otimes \omega_\Sigma(p), \quad \mathrm{rk}(\CE) = n, \quad \mathrm{deg}(\CE) = d,
\]
and $F_p$ is a complete flag of $\CE$ over the point $p$ such that the residue $\mathrm{res}_p(\theta)$ of the Higgs field over the point $p$ preserves the flag $F_p$. The stability condition here requires that any parabolic Higgs subbundle has a smaller slope. We have the corresponding Hitchin map which is proper over an affine space
\[
f^{\mathrm{par}}: M^{\mathrm{par}} \to B^{\mathrm{par}} := \bigoplus_{i = 1}^nH^0\left(\Sigma, \omega_\Sigma(p)^{\otimes i}\right).
\]

We record the following lemma which is a direct consequence of the Riemann--Roch formula.

\begin{lem}\label{lem1.20}
We have 
\[
 H^0(\Sigma, \omega_\Sigma) = H^0(\Sigma, \omega_\Sigma(p)).
\]
\end{lem}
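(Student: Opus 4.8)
The plan is to compute both sides via Riemann--Roch on the curve $\Sigma$ and show the inclusion $H^0(\Sigma,\omega_\Sigma)\subseteq H^0(\Sigma,\omega_\Sigma(p))$ is an equality by a dimension count. The inclusion itself is automatic: any global section of $\omega_\Sigma$ is a global section of $\omega_\Sigma(p)$, since the natural map $\omega_\Sigma\hookrightarrow\omega_\Sigma(p)$ induces an injection on global sections. So it suffices to show $h^0(\Sigma,\omega_\Sigma(p))=h^0(\Sigma,\omega_\Sigma)=g$, the latter equality being the classical fact that the canonical system has dimension equal to the genus.

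For the former, I would apply Riemann--Roch to the line bundle $\omega_\Sigma(p)$, which has degree $\deg\omega_\Sigma+1=2g-1$. Riemann--Roch gives
\[
h^0(\Sigma,\omega_\Sigma(p))-h^1(\Sigma,\omega_\Sigma(p))=\deg(\omega_\Sigma(p))+1-g=(2g-1)+1-g=g.
\]
By Serre duality, $h^1(\Sigma,\omega_\Sigma(p))=h^0(\Sigma,\CO_\Sigma(-p))$, and since $-p$ has negative degree this vanishes (a nonzero section of a negative-degree line bundle on a projective curve would have more zeros than poles). Hence $h^0(\Sigma,\omega_\Sigma(p))=g$, and combined with $h^0(\Sigma,\omega_\Sigma)=g$ and the inclusion above, the two spaces coincide.

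There is no real obstacle here; the only thing to be slightly careful about is invoking $g\geq 2$ (fixed throughout the section) so that the genus-zero and genus-one degeneracies do not arise, although in fact the argument above works for all $g\geq 1$ and even the statement is vacuous-but-true for $g=0$. The content of the lemma is simply that adding one base point $p$ to the canonical divisor does not enlarge the space of sections, which geometrically reflects the fact that $\omega_\Sigma$ is already globally generated (indeed very ample for $g\geq 2$, hence in particular base-point free, so no section of $\omega_\Sigma(p)$ can have a genuine pole at $p$).
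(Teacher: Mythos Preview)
Your argument is correct and is exactly what the paper intends: it states only that the lemma is a direct consequence of the Riemann--Roch formula, and you have spelled out precisely that computation together with the obvious inclusion.
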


Since $\mathrm{res}_p(\theta)$ preserves a complete flag, its eigenvalues naturally lie in an affine space 
\[
\BA^n:=\left\{(\lambda_1, \cdots, \lambda_n) \mid \lambda_i \in \BA^1 \right\}, 
\]
and Lemma \ref{lem1.20} ensures that the sum of the eigenvalues is always $0$; in other words, the parameter space of (ordered) eigenvalues of $\mathrm{res}_p(\theta)$ lies in a codimension $1$ hyperplane
\[
\CA=\left\{(\lambda_1, \cdots, \lambda_n) \mid \sum_i\lambda_i = 0\right\}\subset \BA^{n}.
\]
As a result, the moduli space admits a natural morphism
\begin{equation}\label{chi_p}
\chi_p: {M}^{\mathrm{par}} \to \CA \simeq \BA^{n-1}
\end{equation}
which is in fact smooth; see e.g.~\cite[Corollary 2.11]{LL}. Each closed fiber is a moduli of stable parabolic Higgs bundles with prescribed eigenvalues at $p$; this is a nonsingular symplectic variety.

On the other hand, the Hitchin base $B^\mathrm{par}$ only remembers the \emph{unordered} eigenvalues of~$\mathrm{res}_p(\theta)$ through its quotient by
\[
B^{\mathrm{par}}_0 = \bigoplus_{i = 1}^nH^0\left(\Sigma, \omega_\Sigma^{\otimes i}((i - 1)p)\right) \subset B^{\mathrm{par}},
\]
see \cite[Sections 3 and 4]{LM}. There is a natural identification
\[
\mathcal{A}\sslash \mathfrak{S}_n \simeq \overline{B}^{\mathrm{par}} := B^{\mathrm{par}}/B^{\mathrm{par}}_0.
\]
We consider the \emph{ordered} Hitchin map
\[
h: M^{\mathrm{par}} \to W:= \mathcal{A} \times_{\overline{B}^{\mathrm{par}}} B^{\mathrm{par}}
\]
which can be thought of as a family of fibrations over $\mathcal{A}$:
\[
\begin{tikzcd}[column sep=small]
M^{\mathrm{par}} \arrow[rr, "h"] \arrow[rd, ""] & & W \arrow[ld, ""] \\
& \mathcal{A}.
\end{tikzcd}
\]

For our purposes, we take a general line $T \simeq \BA^1 \subset \CA$ passing through the origin and restrict the family (\ref{chi_p}) over $T$:
\[
\chi_{p}: M^{\mathrm{par}}_T \to T.
\]
The geometry of this family is described by \cite[Sections 8.3 and 8.4]{HMMS} and \cite[Section 5.4.2]{MSY}, connecting two types of Hitchin moduli spaces. We review in the following some details which are relevant for our purposes.

\begin{enumerate}
    \item[(a)] Let $\eta \in T$ be a general point which represents $n$ \emph{distinct} eigenvalues satisfying the condition that no proper subset of the eigenvalues has sum $0$. Let $C_\eta \to W_\eta$ be the universal spectral curve with fixed eigenvalues $\eta$ at $p$. By the generality of $\eta$ and Lemma \ref{lem1.20}, every curve in the family $C_\eta \to W_\eta$ is integral. The fiber $M^{\mathrm{par}}_\eta: = \chi_p^{-1}(\eta)$ therefore admits a compactified Jacobian fibration
    \[
    h_\eta: M^{\mathrm{par}}_\eta \to W_\eta.
    \]
    Let $T^\circ \subset T$ be the Zariski open subset such that the spectral curves in $W_\eta$ are integral for any $\eta \in T^\circ$. We thus obtain a family of compactified Jacobian fibrations over $T^\circ$:
\[
\begin{tikzcd}[column sep=small]
M^{\mathrm{par}}_{T^\circ} \arrow[rr, "h^\circ"] \arrow[rd, ""] & & W_{T^\circ} \arrow[ld, ""] \\
& T^\circ.
\end{tikzcd}
\]

    \item[(b)] The central fiber $M^{\mathrm{par}}_0:= \chi_p^{-1}(0)$ together with its associated Hitchin system
    \[
h_0:    M^{\mathrm{par}}_0 \to W_0
    \]
is a classical object, known as the moduli of stable strongly parabolic Higgs bundles. The variety $M^{\mathrm{par}}_0$ parameterizes triples $(\CE, \theta, F_p)$ with similar condition as for stable parabolic Higgs bundle, but the main change is that the residue of the Higgs field preserves the flag in a stronger sense: \[
\mathrm{res}_p(\theta): F_{p,j} \to F_{p,{j+1}}.
\]
In other words, $M_0^{\mathrm{par}}$ is the moduli of stable parabolic Higgs bundles with nilpotent residue.
\end{enumerate}

In conclusion, by specializing over $T$, we are able to connect the geometry of a compactified Jacobian fibration to the Hitchin system associated with the moduli of strongly parabolic Higgs bundles:
\begin{equation}\label{sp_diagram}
    h_\eta: M^{\mathrm{par}}_\eta \to W_\eta
\quad \rightsquigarrow \quad h_0:    M^{\mathrm{par}}_0 \to W_0.
\end{equation}
We now connect the latter to the more classical Hitchin system~$f: M_{n,d} \to B$ via a correspondence following \cite[Section 8.6]{HMMS}.

\subsection{Correspondences}\label{sec1.3}
We consider the closed subvariety parameterizing parabolic Higgs bundles with $0$-residue at $p$,
\begin{equation}\label{1.3_1}
\widetilde{M}_0=\{(\CE, \theta, F_p) \mid \mathrm{res}_p(\theta) = 0\} \subset M_0^{\mathrm{par}} = \{(\CE, \theta, F_p) \mid \mathrm{res}_p(\theta) \textrm{ is nilpotent}\}.
\end{equation}
In particular, every parabolic Higgs bundle in $\widetilde{M}_0$ has no pole at $p$. Hence there is a  well-defined forgetful map:
\begin{equation}\label{1.3_2}
\widetilde{M}_0 \to M_{n,d}, \quad (\CE, \theta, F_p) \mapsto (\CE, \theta)
\end{equation}
where every closed fiber is isomorphic to a flag variety $\mathrm{Fl}_n$ parameterizing all possible choices of flags.

We summarize the discussion above in the diagram
\begin{equation}\label{corr}
    \begin{tikzcd}
\widetilde{M}_0 \arrow[r, "\iota"] \arrow[d, "\pi"]
&M_0^{\mathrm{par}} \\
M_{n,d} 
\end{tikzcd}
\end{equation}
where $\iota$ is the closed embedding (\ref{1.3_1}) and $\pi$ is the smooth projection (\ref{1.3_2}). All the three moduli spaces in (\ref{corr}) map to a common base $W_0$ with all the induced natural diagrams commuting: the map from $M_0^{\mathrm{par}}$ is the parabolic Hitchin map $h_0$, the map from $\widetilde{M}_0$ is the induced map, and the map from $M_{n,d}$ is the Hitchin map $f: M_{n,d} \to B$ composed with the natural inclusion of the linear subspace $B \subset W_0$.


Our purpose is to construct certain algebraic cycles on $M_{n,d} \times_B M_{n,d}$ which serve as orthogonal projectors. The diagram (\ref{corr}) allows us to construct cycles for $M_{n,d}$ from $M_0^{\mathrm{par}}$, \emph{i.e.}, we have
\begin{equation} \label{eq:defGamma}
\Gamma: \mathrm{CH}_*({M^{\mathrm{par}}_0 \times_{W_0}M^{\mathrm{par}}_0}) \to \mathrm{CH}_*( M_{n,d} \times_B M_{n,d} )
\end{equation}
given by the formula
\[
\Gamma(\alpha):= {^\mathfrak{t}[\widetilde{M}_0]}\circ \alpha \circ [\widetilde{M}_0].
\]
Here we view~$[\widetilde{M}_0]$ to be a correspondence from $M_{n,d}$ to $M^{\mathrm{par}}_0$ via (\ref{corr}) and $ {^\mathfrak{t}[\widetilde{M}_0]}$ to be its transpose. The change of indices of the Chow groups through $\Gamma$ is given by the dimension formula:
\[
\mathrm{codim}(\iota) = \mathrm{rel.dim}(\pi) = \binom{n}{2};
\]
indeed, each closed fibers of $\pi$ is a flag variety whose dimension is $\binom{n}{2}$. In particular, $\Gamma$ sends $\mathrm{CH}_{\dim M_0^{\mathrm{par}}}({M^{\mathrm{par}}_0 \times_{W_0}M^{\mathrm{par}}_0})$ to $\mathrm{CH}_{\dim M_{n,d}}( M_{n,d} \times_B M_{n,d} )$.

The following lemma gives a sufficient condition for $\Gamma(\alpha)$ to vanish.  

\begin{lem}\label{lem1.2}

Suppose we are given a flat, proper surjective morphism $p: Z \to W_0$ with $Z$ irreducible and nonsingular and that there exists a factorization of $\alpha$
\[
\alpha := \alpha_2 \circ \alpha_1 \in \mathrm{CH}_{\dim  M_0^{\mathrm{par}}}({M^{\mathrm{par}}_0 \times_{W_0}M^{\mathrm{par}}_0})
\]
such that either
\[
\alpha_1 \in \mathrm{CH}_{>\dim (M_0^{\mathrm{par}}\times_{W_0}Z)-\binom{n}{2}}(M_0^{\mathrm{par}}\times_{W_0}Z) \textup{ or } \alpha_2 \in \mathrm{CH}_{>\dim (M_0^{\mathrm{par}}\times_{W_0}Z)-\binom{n}{2}}(Z\times_{W_0}M_0^{\mathrm{par}}).\]
Then we have
\[
\Gamma(\alpha) = 0 \in  \mathrm{CH}_{\dim M_{n,d}}( M_{n,d} \times_{W_0} M_{n,d}) =\mathrm{CH}_{\dim  M_{n,d}}( M_{n,d} \times_{B} M_{n,d}).
\]
\end{lem}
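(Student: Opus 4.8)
The plan is to reduce the vanishing of $\Gamma(\alpha)$ to a dimension count on fiber products over $W_0$, using the definition $\Gamma(\alpha) = {}^{\mathfrak t}[\widetilde M_0]\circ \alpha\circ [\widetilde M_0]$ together with the factorization $\alpha = \alpha_2\circ\alpha_1$ through $Z$. First I would spell out what $\Gamma(\alpha)$ is as a correspondence: it is a class on $M_{n,d}\times_B M_{n,d}$ obtained by composing, over $W_0$, the chain of correspondences $M_{n,d}\xleftarrow{} \widetilde M_0 \to M_0^{\mathrm{par}} \xrightarrow{\alpha_1} Z \xrightarrow{\alpha_2} M_0^{\mathrm{par}} \xleftarrow{} \widetilde M_0 \to M_{n,d}$. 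By associativity of composition of relative correspondences, $\Gamma(\alpha) = \big({}^{\mathfrak t}[\widetilde M_0]\circ\alpha_2\big)\circ\big(\alpha_1\circ[\widetilde M_0]\big)$, so it suffices to show that one of the two ``half-compositions'' $\alpha_1\circ[\widetilde M_0] \in \mathrm{CH}_*(M_{n,d}\times_{W_0}Z)$ or $\alpha_2\circ[\widetilde M_0]'\in\mathrm{CH}_*(Z\times_{W_0}M_{n,d})$ (after transposing) already vanishes — or more precisely lands in a Chow group that is forced to be zero by dimension reasons after the final push-pull through $\pi$ and $\iota$.

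The key point is a dimension bookkeeping. The operation $\beta\mapsto {}^{\mathfrak t}[\widetilde M_0]\circ\beta$ (or $\beta\circ[\widetilde M_0]$) raises the index of a cycle class by $\mathrm{rel.dim}(\pi) - \mathrm{codim}(\iota) = \binom n2 - \binom n2 = 0$ in the ``balanced'' case, but the relevant bound here is that composing $[\widetilde M_0]$ (a class supported on a subvariety of dimension $\dim M_{n,d} = \dim M_0^{\mathrm{par}} - \binom n2 + \binom n2$, sitting inside $M_{n,d}\times_{W_0} M_0^{\mathrm{par}}$) with a class $\alpha_1$ on $M_0^{\mathrm{par}}\times_{W_0}Z$ of dimension strictly larger than $\dim(M_0^{\mathrm{par}}\times_{W_0}Z) - \binom n2$ produces a class on $M_{n,d}\times_{W_0}Z$ of dimension strictly larger than $\dim(M_{n,d}\times_{W_0}Z)$. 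Since $M_{n,d}\times_{W_0}Z$ has no Chow classes in degrees exceeding its dimension, $\alpha_1\circ[\widetilde M_0] = 0$, and hence $\Gamma(\alpha)=0$; the case where $\alpha_2$ satisfies the hypothesis is symmetric via the transpose. Making this precise requires the flatness and surjectivity of $p: Z\to W_0$ (so that fiber products over $W_0$ have the expected dimension and the composition of correspondences is well-defined) and the smoothness/irreducibility hypotheses on $Z$, $M_{n,d}$, $M_0^{\mathrm{par}}$ so that intersection-theoretic composition behaves as expected; the identification $M_{n,d}\times_{W_0}M_{n,d} = M_{n,d}\times_B M_{n,d}$ holds because $f$ factors through $B\hookrightarrow W_0$ as a linear subspace, so the two fiber products agree.

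I expect the main obstacle to be keeping the dimension indices straight through the composition of three correspondences living over $W_0$ (not over a point), in particular verifying that the degree shift in $\beta\mapsto[\widetilde M_0]\circ\beta$ is exactly as claimed and that the inequality ``$\dim > \dim(\,\cdot\,) - \binom n2$'' propagates to ``$\dim > \dim(M_{n,d}\times_{W_0}Z)$'' — this hinges on the identity $\dim\widetilde M_0 = \dim M_{n,d} = \dim M_0^{\mathrm{par}} - \binom n2$ and on $\widetilde M_0$ fibering over $M_{n,d}$ with fibers of dimension $\binom n2$. A secondary technical point is ensuring that the relevant fiber products remain of the expected dimension even over the singular locus of $W_0$; here the flatness of $p$ and of the structure maps, together with the fact that all the moduli spaces involved are equidimensional over $W_0$ on the relevant strata, should suffice, but one should check that no excess-dimension components of $\widetilde M_0\times_{M_0^{\mathrm{par}}}(\cdots)$ spoil the count. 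Once these dimension facts are in place, the vanishing is immediate.
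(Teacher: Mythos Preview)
Your approach is correct and is exactly the paper's argument: use associativity to write $\Gamma(\alpha)=({}^{\mathfrak t}[\widetilde M_0]\circ\alpha_2)\circ(\alpha_1\circ[\widetilde M_0])$, then show $\alpha_1\circ[\widetilde M_0]\in\mathrm{CH}_{>\dim(M_{n,d}\times_{W_0}Z)}(M_{n,d}\times_{W_0}Z)=0$ by a dimension count using the flatness of $p$ and the identity $\dim M_0^{\mathrm{par}}-2\binom{n}{2}=\dim M_{n,d}$. One slip to fix in your ``obstacles'' paragraph: you wrote $\dim\widetilde M_0=\dim M_{n,d}$, but in fact $\dim\widetilde M_0=\dim M_{n,d}+\binom{n}{2}=\dim M_0^{\mathrm{par}}-\binom{n}{2}$ (consistent with the fiber bundle statement you make in the same sentence); the correct degree shift under $\beta\mapsto\alpha_1\circ[\widetilde M_0]$ is $-\binom{n}{2}$, which is what your main paragraph actually uses.
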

\begin{proof}
We prove the vanishing under the first hypothesis; the argument under the second hypothesis is identical.  If we consider the composition $\alpha_1 \circ [\widetilde{M}_0] \in \mathrm{CH}_*(M_{n,d}\times_{W_0}Z)$, 
the assumption on $\alpha_1$
implies that this lives in degree strictly larger than
$$
\dim (M_0^{\mathrm{par}}\times_{W_0}Z)-2\binom{n}{2}.
$$
The assumption on $p$ implies that this bound equals
\[
\dim M_0^{\mathrm{par}}  + \dim Z - \dim W_0-2\binom{n}{2} = \dim M_{n,d} + \dim Z - \dim W_0
\]
which is the dimension of $M_{n,d}\times_{W_0}Z$.  As a result, we have the vanishing of $\alpha_1 \circ [\widetilde{M}_0]$ so $\Gamma(\alpha)$ vanishes as well.
\end{proof}

We conclude this section by giving an overview of the rest of the paper. Our strategy is to first construct projectors for compactified Jacobian fibrations and show that these algebraic cycles satisfy the desired properties as in Theorems \ref{thm2} and \ref{thm3} using the techniques of \cite{MSY}. We will discuss this in Section \ref{Section2}, which solves the problems for $h_\eta: M^{\mathrm{par}}_\eta \to W_\eta$ of Section~\ref{Sec1.2}(a). Then applying the specialization and the correspondence $\Gamma$, we obtain algebraic cycles for~$M_{n,d}$; we further apply a Springer theory argument to show that these cycles form orthogonal projectors as desired. This is treated in Section \ref{Section3}. Finally, in Section \ref{Section4}, we show that the projectors obtained via $\Gamma$ have the correct homological realization under the Corti--Hanamura functor.

\section{Projectors for compactified Jacobians}\label{Section2}

In this section, we first introduce the notion of {motivic perverse filtrations}. Then we treat the case of compactified Jacobians associated with locally planar curves, where our main tool is the Fourier theory developed in \cite{MSY}. For our purpose, we need a slight extension of the results of \cite{MSY} in Section \ref{Section2.2}. Then we apply these results to the parabolic Hitchin system of Section \ref{Sec1.2}(a), which further allows us to prove analogues of Theorems \ref{thm1}, \ref{thm2}, and \ref{thm3} for~$h_0: M^{\mathrm{par}}_0 \to W_0$.

\subsection{Motivic background}\label{sec2.1}
We begin by reviewing the motivic background necessary for our main results, and by introducing the notion of a \emph{motivic perverse filtration}.

\subsubsection{Relative Chow motives}
The theory of relative Chow motives of Corti--Hanamura \cite{CH} is built to be compatible with the decomposition theorem, adapts well to non-proper bases, and admits natural Chow/homological realizations~\cite{GHM}. We briefly recall the theory below, and refer to \cite[Section 2.2]{MSY} for more details.

We work over a nonsingular base variety $B$. Recall that the group of degree $k$ relative correspondences between two nonsingular proper $B$-schemes $X, Y$ with $Y$ irreducible is
\[
\mathrm{Corr}^k_B(X, Y) := \mathrm{CH}_{\dim Y - k}(X \times_B Y).
\]
For reducible $Y$, we decompose $Y = \sqcup_\alpha Y_\alpha$ and define the group of correspondences accordingly. Compositions of relative correspondences are defined via refined intersection theory. The category of relative Chow motives $\mathrm{CHM}(B)$ consists of objects triples $(X, \mathfrak{p}, m)$ where $X$ is a nonsingular proper $B$-scheme, $\mathfrak{p} \in \Corr^0_B(X, X)$ is a projector, and $m \in \mathbb{Z}$. For example, the motive of $X$ is
\[
h(X) := (X, [\Delta_{X/B}], 0)
\]
where $\Delta_{X/B}$ is the relative diagonal. Morphisms between two motives $M = (X, \mathfrak{p}, m)$, \mbox{$N = (Y, \mathfrak{q}, n)$} are
\[
\Hom_{\mathrm{CHM}(B)}(M, N) := \mathfrak{q} \circ \Corr^{n - m}_B(X, Y) \circ \mathfrak{p}.
\]
Note that in \cite[Section 2.3.3]{MSY}, we also introduced relative binary correspondences
\[
\Corr_B^k(X, Y; Z) := \mathrm{CH}_{\dim Z - k}(X \times_B Y \times_B Z)
\]
where $X, Y, Z$ are nonsingular proper $B$-schemes. Binary morphisms from $M = (X, \Fp, m)$, $N = (Y, \Fq, n)$ to $P = (Z, \Fr, p)$ are given by
\[
\Fr \circ \Corr_B^{p - m - n}(X, Y; Z) \circ (\Fp \times \Fq).
\]
For example, the motivic cup-product
\[
\cup : h(X) \times h(X) \to h(X)
\]
is induced by the class of the relative small diagonal
\[
[\Delta^{\mathrm{sm}}_{X/B}] \in \Corr_B^0(X, X; X);
\]
see \cite[Section 2.3.3]{MSY} for more details. Finally, the degree $k$ Chow group of $M = (X, \mathfrak{p}, m)$ is
\[
\mathrm{CH}^k(M) := \Hom_{\mathrm{CHM}(B)}(h(B/B), M(k))
\]
where $M(k) := (X, \mathfrak{p}, m + k)$ is the $k$-th Tate twist of $M$.

\subsubsection{Motivic perverse filtrations}

There is a realization functor
\begin{equation}\label{CH0}
\mathrm{CHM}(B) \to D^\mathrm{b}_\mathrm{c}(B)
\end{equation}
which sends the motive $h(X)$ for $f: X \to B$ to the direct image complex $Rf_*\BQ_X$. The latter admits a natural (sheaf-theoretic) perverse filtration induced by the perverse truncation functor. We introduce the notion of a \emph{motivic perverse filtration} lifting the perverse truncation of $Rf_*\BQ_X \in D^b_c(X)$ to the object $h(X) \in \mathrm{CHM}(B)$. Then we prove that, although a motivic perverse filtration is \emph{a priori} weaker than a motivic decomposition in the sense of Corti--Hanamura \cite{CH}, they are in fact equivalent.

Let $f: X\to B$ be a proper morphism with $X$ irreducible and nonsingular, and let $d_f$ be the defect of semismallness $\dim X\times_B X -\dim X $.

\begin{defn}\label{def_motivic_P}
We say that a sequence of motives
\begin{equation*}
P_k h(X) = (X, \mathfrak{p}_k, 0), \quad 0\leq k \leq 2d_f,
\end{equation*}
which are all summands of $h(X)$, form a \emph{motivic perverse filtration} for $f$, if 
\begin{enumerate}
    \item[(a)](Termination) $P_{2d_f}h(X) = h(X)$;
    \item[(b)](Realization) under the Corti--Hanamura functor (\ref{CH0}) the natural inclusion 
    \[
    P_\bullet h(X)\hookrightarrow h(X)
    \]
    specializes to the natural inclusion induced by the perverse truncation functor
    \[
    ^{\mathfrak{p}}\tau_{\leq \bullet+ (\dim X - d_f)} Rf_*\BQ_X \hookrightarrow  Rf_*\BQ_X;
    \]
    \item[(c)](Semi-orthogonality) for any $k$ we have the relation 
    \[
    \mathfrak{p}_{k + 1} \circ \mathfrak{p}_k = \mathfrak{p}_k.
    \]
\end{enumerate}
\end{defn}

We note that (c) together with an easy induction implies
\begin{equation} \label{eq:semiorth}
\mathfrak{p}_l \circ \mathfrak{p}_k = \Fp_k, \quad k < l.
\end{equation}
Indeed, we have
\[
\mathfrak{p}_l \circ \mathfrak{p}_k = \mathfrak{p}_l \circ \mathfrak{p}_{k + 1} \circ \mathfrak{p}_k = \mathfrak{p}_l \circ \mathfrak{p}_{l - 1} \circ \cdots \circ \mathfrak{p}_{k + 1} \circ \mathfrak{p}_k = \mathfrak{p}_{l - 1} \circ \cdots \circ \mathfrak{p}_{k + 1} \circ \mathfrak{p}_k = \mathfrak{p}_k.
\]
On the other hand, this does not mean that $P_kh(X)$ is a summand of $P_lh(X)$ if $k<l$. So if we define directly
\[
\mathfrak{r}_k: = \mathfrak{p}_k - \mathfrak{p}_{k-1},
\]
they are only semi-orthogonal:
\[
\Fr_{l} \circ \Fr_{k} = 0, \quad k<l,
\]
and we lose control of the other half of the orthogonality
\[
\Fr_{l} \circ \Fr_{k} \stackrel{?}{=} 0, \quad k \geq l.
\]
In particular, the $\mathfrak{r}_k$ are not necessarily projectors. However, the following proposition in \cite{MSY} shows that the existence of a motivic perverse filtration is equivalent to the existence of a motivic decomposition.

\begin{prop}[Motivic perverse filtration $=$ motivic decomposition]\label{prop2.1}
A motivic perverse filtration induces a motivic decomposition
\[
h(X) = \bigoplus_{i=0}^{2d_f} h_i(X) \in \mathrm{CHM}(B).
\]
\end{prop}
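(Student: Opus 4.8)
The plan is to produce, from the projectors $\mathfrak{p}_k$ of the motivic perverse filtration, genuine orthogonal projectors $\mathfrak{q}_i$ whose sum is the relative diagonal, by a standard linear-algebra argument adapted to the category $\mathrm{CHM}(B)$. The naive guess $\mathfrak{r}_k := \mathfrak{p}_k - \mathfrak{p}_{k-1}$ fails, as the excerpt already observes: one has $\mathfrak{r}_l \circ \mathfrak{r}_k = 0$ only for $k < l$, and no control on $\mathfrak{r}_l \circ \mathfrak{r}_k$ when $k \geq l$. The fix is to symmetrize using the semi-orthogonality relation $\mathfrak{p}_l \circ \mathfrak{p}_k = \mathfrak{p}_k$ for $k < l$ established in \eqref{eq:semiorth}. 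Concretely, I would set $\mathfrak{p}_{-1} := 0$ (so $\mathfrak{p}_k \circ \mathfrak{p}_l = \mathfrak{p}_{\min(k,l)}$ holds for all relevant indices once one also checks $\mathfrak{p}_k \circ \mathfrak{p}_{k+1} = \mathfrak{p}_k$, which follows by applying the Corti--Hanamura realization functor together with faithfulness on the relevant Hom groups — or more elementarily by noting $\mathfrak{p}_k = \mathfrak{p}_k \circ \mathfrak{p}_k = \mathfrak{p}_k \circ (\mathfrak{p}_{k+1}\circ\mathfrak{p}_k)$ combined with idempotency arguments). The point is that $\{\mathfrak{p}_k\}$ behaves like a flag of subspaces inside $h(X)$.

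The key step is then to construct the summands $h_i(X)$ one at a time by an inductive splitting. Assume inductively that we have produced orthogonal projectors $\mathfrak{q}_0, \dots, \mathfrak{q}_{i-1}$ with $\sum_{j<i} \mathfrak{q}_j = \mathfrak{p}_{i-1}$ and $\mathfrak{q}_j \circ \mathfrak{q}_{j'} = \delta_{jj'}\mathfrak{q}_j$. Consider $\mathfrak{e}_i := \mathfrak{p}_i - \mathfrak{p}_{i-1}$. Using $\mathfrak{p}_i \circ \mathfrak{p}_{i-1} = \mathfrak{p}_{i-1} \circ \mathfrak{p}_i = \mathfrak{p}_{i-1}$ one checks $\mathfrak{e}_i \circ \mathfrak{e}_i = \mathfrak{p}_i - 2\mathfrak{p}_{i-1} + \mathfrak{p}_{i-1} = \mathfrak{e}_i$, so $\mathfrak{e}_i$ is already a projector; moreover $\mathfrak{e}_i \circ \mathfrak{q}_j = 0$ for $j < i$ because $\mathfrak{q}_j$ factors through $\mathfrak{p}_{i-1}$ and $\mathfrak{e}_i \circ \mathfrak{p}_{i-1} = 0$. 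The remaining issue is the other direction: $\mathfrak{q}_j \circ \mathfrak{e}_i$ need not vanish. But since the $\mathfrak{q}_j$ are genuine orthogonal projectors summing to $\mathfrak{p}_{i-1}$, we may correct $\mathfrak{e}_i$ on the left: replace it by $\mathfrak{q}_i := \mathfrak{e}_i - \mathfrak{p}_{i-1}\circ \mathfrak{e}_i = (\mathrm{id} - \mathfrak{p}_{i-1})\circ \mathfrak{e}_i$ — or rather conjugate appropriately — and verify directly that $\mathfrak{q}_i \circ \mathfrak{q}_i = \mathfrak{q}_i$, that $\mathfrak{q}_i \circ \mathfrak{q}_j = \mathfrak{q}_j \circ \mathfrak{q}_i = 0$ for $j < i$, and that $\mathfrak{p}_i = \mathfrak{p}_{i-1} + \mathfrak{q}_i = \sum_{j \leq i}\mathfrak{q}_j$. (This is precisely the Gram--Schmidt/idempotent-lifting manipulation: from a filtration of idempotents in any additive idempotent-complete category one extracts a complete orthogonal family, once the individual quotient idempotents and the composition laws are under control.) Setting $i = 2R_f$, termination (a) gives $\mathfrak{p}_{2R_f} = [\Delta_{X/B}]$, hence $\sum_{i=0}^{2R_f}\mathfrak{q}_i = [\Delta_{X/B}]$, and we put $h_i(X) := (X, \mathfrak{q}_i, 0)$.

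The main obstacle — really the only subtle point — is establishing the full two-sided composition law $\mathfrak{p}_k \circ \mathfrak{p}_l = \mathfrak{p}_{\min(k,l)}$ from the one-sided semi-orthogonality axiom (c), since without it the inductive correction above does not close up. The clean way is: axiom (c) gives $\mathfrak{p}_{k+1}\circ\mathfrak{p}_k = \mathfrak{p}_k$, so $\mathfrak{p}_k$ is a subobject-projector of $\mathfrak{p}_{k+1}$ in the sense that $\mathrm{im}(\mathfrak{p}_k)\subseteq \mathrm{im}(\mathfrak{p}_{k+1})$; I would argue that on the image of any projector $\mathfrak{p}$ that is itself a summand of $\mathfrak{p}_{k+1}$, the "other half" $\mathfrak{p}_k \circ \mathfrak{p}_{k+1} = \mathfrak{p}_k$ is then automatic because both sides act identically after realization and the realization functor is faithful on the pertinent Hom-group — or, avoiding realization entirely, one observes that $h(X)$ decomposes over the generic point (smooth locus) where the statement is classical Künneth for local systems, and the obstruction to spreading out such an identity of correspondences vanishes because it is a closed condition cut out on a dense open. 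Once that bidirectional law is in hand, every remaining verification is a finite sequence of substitutions in $\mathrm{Corr}^0_B(X,X)$ and the proposition follows; I would write out the induction base ($i=0$: $\mathfrak{q}_0 := \mathfrak{p}_0$, which is a projector by hypothesis) and one representative inductive step, then invoke Proposition~\ref{prop2.1}'s converse direction implicitly by noting the construction is reversible.
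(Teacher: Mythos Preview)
Your argument has a genuine gap at its pivot point. The computation
\[
\mathfrak{e}_i \circ \mathfrak{e}_i = \mathfrak{p}_i - 2\mathfrak{p}_{i-1} + \mathfrak{p}_{i-1} = \mathfrak{e}_i
\]
uses \emph{both} $\mathfrak{p}_i \circ \mathfrak{p}_{i-1} = \mathfrak{p}_{i-1}$ and $\mathfrak{p}_{i-1} \circ \mathfrak{p}_i = \mathfrak{p}_{i-1}$. Only the first follows from axiom (c); the second is exactly what the paper warns is \emph{not} available (``this does not mean that $P_kh(X)$ is a summand of $P_lh(X)$ if $k<l$''). Your three attempted justifications for the missing direction all fail: the idempotency manipulation $\mathfrak{p}_k = \mathfrak{p}_k \circ (\mathfrak{p}_{k+1}\circ\mathfrak{p}_k)$ does not isolate $\mathfrak{p}_k \circ \mathfrak{p}_{k+1}$; the Corti--Hanamura realization functor is not known to be faithful (this would amount to deep standard-conjecture-type statements); and identities of algebraic cycles do not spread from a dense open by a ``closed condition'' argument --- Chow groups simply do not behave that way.

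The paper's fix is purely algebraic and avoids the missing direction altogether. One sets
\[
\widetilde{\mathfrak{p}}_k := \mathfrak{p}_k \circ \mathfrak{p}_{k+1} \circ \cdots \circ \mathfrak{p}_{2R_f}.
\]
Using only the one-sided relation \eqref{eq:semiorth}, one checks directly that these modified projectors satisfy the \emph{two-sided} law
\[
\widetilde{\mathfrak{p}}_l \circ \widetilde{\mathfrak{p}}_k = \widetilde{\mathfrak{p}}_k \circ \widetilde{\mathfrak{p}}_l = \widetilde{\mathfrak{p}}_{\min(k,l)},
\]
so that $\overline{\mathfrak{p}}_k := \widetilde{\mathfrak{p}}_k - \widetilde{\mathfrak{p}}_{k-1}$ are genuinely orthogonal projectors summing to the diagonal. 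That $\widetilde{\mathfrak{p}}_k$ has the same realization as $\mathfrak{p}_k$ follows from (b), since under realization the perverse truncations already commute. This is the step your Gram--Schmidt scheme was reaching for; the point is that the symmetrization has to happen at the level of the projectors themselves, not be imported from the realization.
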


\begin{proof}
The proof was given in the \cite[Section 2.5.5]{MSY}; we include it here for completeness. Assume that the motives $P_kh(X) = (X, \mathfrak{p}_k, 0)$ induce a motivic perverse filtration. We consider the modification of projectors:
\[
\widetilde{\Fp}_k := \Fp_k \circ \cdots \circ \Fp_{2d_f}, \quad \overline{\Fp}_k := \widetilde{\Fp}_k - \widetilde{\Fp}_{k - 1}.
\]
Using \eqref{eq:semiorth}, it is straightforward to verify that both $\widetilde{\Fp}_k$ and $\overline{\Fp}_k$ are projectors satisfying the relations
\[
\widetilde{\Fp}_l \circ \widetilde{\Fp}_k = \widetilde{\Fp}_k \circ \widetilde{\Fp}_l = \widetilde{\Fp}_k, \quad k <  l.
\]
In particular, we have
\begin{equation*}\label{p2.1_1} 
\widetilde{P}_kh(X) = \widetilde{P}_{k - 1}h(X) \oplus h_k(X) \in \mathrm{CHM}(B)
\end{equation*}
with
\[
\widetilde{P}_kh(X): = (X, \widetilde{\Fp}_k, 0), \quad h_k(X) := (X,\overline{\mathfrak{p}}_k, 0).
\]
We see from (b) that both motives $\widetilde{P}_kh(X)$ and $P_kh(X)$ have the same realization under~(\ref{CH0}), which is the one given by the perverse truncation functor. In particular, the homological realization of the projector~$\overline{\mathfrak{p}}_k$ is taking the corresponding perverse cohomology of the complex~$Rf_*\BQ_X$. This finishes the proof of the proposition.
\end{proof}

In the following, we further extend the equivalence between motivic perverse filtrations and motivic decompositions of Proposition \ref{prop2.1} to isomorphism classes.

Assume that 
\[
f: X\to B, \quad f': X'\to B
\]
admit motivic perverse filtrations
\begin{equation}\label{motivic_P0}
P_k h(X) = (X, \mathfrak{p}_k, 0), \quad P_k h(X') = (X', \mathfrak{p}'_k, 0), \quad 0\leq k \leq 2d_f=2d_{f'}.
\end{equation}

\begin{defn}
We say that an isomorphism of relative Chow motives
\[
\mathfrak{C}: h(X) \xrightarrow{~~\simeq~~} h(X'),\quad \mathfrak{C}^{-1}: h(X') \xrightarrow{~~\simeq~~} h(X)
\]
with
\[
\mathfrak{C}\in \mathrm{Corr}_B^0(X, X'),\quad \mathfrak{C}^{-1} \in \mathrm{Corr}_B^0(X', X)
\]
preserves the motivic perverse filtrations (\ref{motivic_P0}), if for any $k$ we have
\begin{equation}\label{key_assumption}
    \mathfrak{q}'_{k + 1} \circ \mathfrak{C}\circ \mathfrak{p}_k =0, \quad
     \mathfrak{q}_{k + 1} \circ \mathfrak{C}^{-1}\circ \mathfrak{p}'_k =0.
\end{equation}
Here
\[
\mathfrak{q}_{k + 1}:= [\Delta_{X/B}] - \mathfrak{p}_k, \quad \mathfrak{q}'_{k + 1}:= [\Delta_{X'/B}] - \mathfrak{p}'_k.
\]
\end{defn}

The following proposition shows that, for two relative Chow motives, an isomorphism preserving motivic filtrations automatically yields isomorphic motivic decompositions as well.

\begin{prop}\label{prop2.2}
With the notation as above, assume that there is an isomorphism between the relative Chow motives $h(X)$ and $h(X')$ preserving the motivic perverse filtrations \eqref{motivic_P0}. Then the following holds:
\begin{enumerate}
    \item[(a)]  For any $k$ we have an isomorphism
    \[
    P_kh(X) \simeq P_{k}h(X') \in \mathrm{CHM}(B)
    \]
    compatible with the inclusions $P_kh(X) \hookrightarrow h(X)$ and $P_kh(X') \hookrightarrow h(X')$.
    \item[(b)] The motivic decompositions of $h(X), h(X')$ given by Proposition \ref{prop2.1} are isomorphic, \emph{i.e.}, for any $k$ we have an isomorphism
\[ h_k(X) \simeq h_k(X') \in \mathrm{CHM}(B).
\]
\end{enumerate}
\end{prop}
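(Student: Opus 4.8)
The plan is to bootstrap everything from the ordinary Proposition \ref{prop2.1}, using $\mathfrak{C}$ to transport one side's projectors to the other and exploiting the semi-orthogonality hypothesis \eqref{key_assumption} exactly as property (c) was used in the proof of Proposition \ref{prop2.1}. First I would record the two immediate consequences of \eqref{key_assumption} that make the induction work: since $\mathfrak{q}'_{k+1} = [\Delta_{X'/B}] - \mathfrak{p}'_k$, the vanishing $\mathfrak{q}'_{k+1}\circ \mathfrak{C}\circ \mathfrak{p}_k = 0$ rearranges to
\[
\mathfrak{C}\circ \mathfrak{p}_k = \mathfrak{p}'_k \circ \mathfrak{C}\circ \mathfrak{p}_k,
\]
and likewise $\mathfrak{C}^{-1}\circ \mathfrak{p}'_k = \mathfrak{p}_k \circ \mathfrak{C}^{-1}\circ \mathfrak{p}'_k$; combining these with $\mathfrak{C}^{-1}\circ \mathfrak{C} = [\Delta_{X/B}]$ one gets the symmetric statements $\mathfrak{p}_k \circ \mathfrak{C}^{-1}\circ \mathfrak{p}'_k \circ \mathfrak{C}\circ \mathfrak{p}_k = \mathfrak{p}_k$ and its primed analogue, which say precisely that $\mathfrak{C}$ and $\mathfrak{C}^{-1}$ restrict to inverse isomorphisms on the subobjects cut out by $\mathfrak{p}_k$ and $\mathfrak{p}'_k$. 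This already gives part (a): the maps $\mathfrak{p}'_k\circ\mathfrak{C}\circ\mathfrak{p}_k$ and $\mathfrak{p}_k\circ\mathfrak{C}^{-1}\circ\mathfrak{p}'_k$ are mutually inverse morphisms $P_kh(X)\rightleftarrows P_kh(X')$, and they are compatible with the inclusions into $h(X)$, $h(X')$ because $\mathfrak{p}_k$ (resp. $\mathfrak{p}'_k$) acts as the identity on $P_kh(X)$ (resp. $P_kh(X')$) — compatibility is the statement $\mathfrak{C}\circ\mathfrak{p}_k = \mathfrak{p}'_k\circ\mathfrak{C}\circ\mathfrak{p}_k$ read as an equality of morphisms $P_kh(X)\to h(X')$, which is the first displayed rearrangement above.

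For part (b), the point is to pass from the $\mathfrak{p}_k$ to the modified projectors $\widetilde{\mathfrak{p}}_k = \mathfrak{p}_k\circ\cdots\circ\mathfrak{p}_{2R_f}$ and $\overline{\mathfrak{p}}_k = \widetilde{\mathfrak{p}}_k - \widetilde{\mathfrak{p}}_{k-1}$ from the proof of Proposition \ref{prop2.1}, and show that $\mathfrak{C}$ also intertwines the $\widetilde{\mathfrak{p}}_k$ and hence the $\overline{\mathfrak{p}}_k$. The cleanest route is to prove $\mathfrak{C}\circ\widetilde{\mathfrak{p}}_k = \widetilde{\mathfrak{p}}'_k\circ\mathfrak{C}\circ\widetilde{\mathfrak{p}}_k$ by downward induction on $k$: for $k = 2R_f$ this is $\mathfrak{C}\circ\mathfrak{p}_{2R_f} = \mathfrak{p}'_{2R_f}\circ\mathfrak{C}\circ\mathfrak{p}_{2R_f}$, which holds because $P_{2R_f}h(X) = h(X)$ forces $\mathfrak{p}_{2R_f} = [\Delta_{X/B}]$ and similarly on the primed side, so the relation degenerates to a tautology; for the inductive step one writes $\widetilde{\mathfrak{p}}_k = \mathfrak{p}_k\circ\widetilde{\mathfrak{p}}_{k+1}$, applies $\mathfrak{C}\circ\mathfrak{p}_k = \mathfrak{p}'_k\circ\mathfrak{C}\circ\mathfrak{p}_k$ together with the inductive hypothesis, and uses \eqref{eq:semiorth} to absorb the extra factors. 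Symmetrically $\mathfrak{C}^{-1}\circ\widetilde{\mathfrak{p}}'_k = \widetilde{\mathfrak{p}}_k\circ\mathfrak{C}^{-1}\circ\widetilde{\mathfrak{p}}'_k$. Since the $\widetilde{\mathfrak{p}}_k$ are genuine projectors that are moreover mutually commuting with $\widetilde{\mathfrak{p}}_l\circ\widetilde{\mathfrak{p}}_k = \widetilde{\mathfrak{p}}_k$ for $k<l$ (established in the proof of Proposition \ref{prop2.1}), the same manipulation as in part (a) now shows $\mathfrak{C}$ restricts to an isomorphism $\widetilde{P}_kh(X)\xrightarrow{\simeq}\widetilde{P}_kh(X')$, and subtracting the isomorphisms at levels $k$ and $k-1$ — which are compatible by the established compatibility with inclusions — yields $h_k(X) = (X,\overline{\mathfrak{p}}_k,0) \simeq (X',\overline{\mathfrak{p}}'_k,0) = h_k(X')$.

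The main obstacle, such as it is, lies in the bookkeeping of part (b): one must be careful that the isomorphisms $\widetilde{P}_{k-1}h(X)\hookrightarrow\widetilde{P}_kh(X)$ and their primed counterparts are genuinely compatible with the transported maps, so that the difference $\overline{\mathfrak{p}}_k$ is intertwined and not merely its ``diagonal block''. This is handled by observing that all the relevant projectors $\widetilde{\mathfrak{p}}_k$ commute with one another, so the subobject picture $\widetilde{P}_kh(X) = \widetilde{P}_{k-1}h(X)\oplus h_k(X)$ is compatible across all levels simultaneously, and the single relation $\mathfrak{C}\circ\widetilde{\mathfrak{p}}_k = \widetilde{\mathfrak{p}}'_k\circ\mathfrak{C}\circ\widetilde{\mathfrak{p}}_k$ for all $k$ controls every block at once. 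No new geometric input is needed; the proposition is a formal consequence of \eqref{key_assumption}, \eqref{eq:semiorth}, and the construction in Proposition \ref{prop2.1}.
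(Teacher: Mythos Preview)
Your proposal is correct and follows essentially the same approach as the paper. Part (a) is identical: both you and the paper use the rearrangement $\mathfrak{C}\circ\mathfrak{p}_k = \mathfrak{p}'_k\circ\mathfrak{C}\circ\mathfrak{p}_k$ to show that $\mathfrak{p}'_k\circ\mathfrak{C}\circ\mathfrak{p}_k$ and $\mathfrak{p}_k\circ\mathfrak{C}^{-1}\circ\mathfrak{p}'_k$ are mutually inverse, with compatibility of inclusions following from the same relation.

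For part (b) the core identity is again the same --- $\widetilde{\mathfrak{p}}'_k\circ\mathfrak{C}\circ\widetilde{\mathfrak{p}}_k = \mathfrak{C}\circ\widetilde{\mathfrak{p}}_k$ --- and your downward induction is a valid way to obtain it (using $\widetilde{\mathfrak{p}}_{k+1}\circ\widetilde{\mathfrak{p}}_k = \widetilde{\mathfrak{p}}_k$ together with the inductive hypothesis and then $\mathfrak{C}\circ\mathfrak{p}_k = \mathfrak{p}'_k\circ\mathfrak{C}\circ\widetilde{\mathfrak{p}}_k$). The paper instead unfolds $\widetilde{\mathfrak{p}}'_k$ directly and peels off the factors $\mathfrak{p}'_l$ one at a time using $\mathfrak{p}'_l\circ\mathfrak{C}\circ\mathfrak{p}_k = \mathfrak{C}\circ\mathfrak{p}_k$ for $l\geq k$. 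Where the two presentations differ is only in the final step: the paper writes out the eight-term expansion of $\overline{\mathfrak{p}}_k\circ\mathfrak{C}^{-1}\circ\overline{\mathfrak{p}}'_k\circ\mathfrak{C}\circ\overline{\mathfrak{p}}_k$ and evaluates each term to $\widetilde{\mathfrak{p}}_{\min\{i_1,i_2,i_3\}}$, whereas you argue abstractly that the isomorphisms $\widetilde{P}_kh(X)\simeq\widetilde{P}_kh(X')$ are upper-triangular with respect to the splittings $\widetilde{P}_kh = \widetilde{P}_{k-1}h\oplus h_k$ and hence induce isomorphisms on the complements. Both are legitimate; the paper's computation is more explicit, while your block-triangular argument is a touch more conceptual but requires the observation (which you correctly flag) that compatibility with inclusions on \emph{both} $\mathfrak{C}$ and $\mathfrak{C}^{-1}$ forces the diagonal blocks to be mutually inverse.
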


\begin{proof}
We first prove (a). Assume that the correspondences $\mathfrak{C}, \mathfrak{C}^{-1}$ induce an isomorphism between $h(X), h(X')$ preserving the perverse filtrations.  We claim that the morphisms between the relative Chow motives
\[
\mathfrak{p}'_k\circ \mathfrak{C}\circ \mathfrak{p}_k :P_kh(X) \to P_kh(X'), \quad  \mathfrak{p}_k\circ \mathfrak{C}^{-1}\circ \mathfrak{p}'_k :P_kh(X') \to P_kh(X)
\]
are inverse to each other. In fact, we have 
\begin{align*}
\mathfrak{p}_k = \mathfrak{p}_k \circ \mathfrak{C}^{-1}\circ \mathfrak{C}\circ \mathfrak{p}_k & = \mathfrak{p}_k \circ \mathfrak{C}^{-1}\circ (\mathfrak{p}'_k+ \mathfrak{q}'_{k+1}) \circ \mathfrak{C}\circ \mathfrak{p}_k\\
&= \mathfrak{p}_k \circ \mathfrak{C}^{-1}\circ \mathfrak{p}'_k \circ \mathfrak{C}\circ \mathfrak{p}_k
\end{align*}
where we used $\mathfrak{q}'_{k+1} \circ \mathfrak{C}\circ \mathfrak{p}_k =0$ in the last equation. This proves
\[
 \mathfrak{p}_k\circ \mathfrak{C}^{-1}\circ \mathfrak{p}'_k\circ \mathfrak{C}\circ \mathfrak{p}_k = \mathfrak{p}_k;
\]
the other identity is parallel. 
To show that this isomorphism is compatible with $\mathfrak{C}$ and the inclusions into $h(X)$ and $h(X')$, it suffices to show that
\[
\mathfrak{C}\circ \mathfrak{p}_k = \mathfrak{p}'_k\circ( \mathfrak{p}'_k\circ \mathfrak{C}\circ \mathfrak{p}_k)
\]
which again follows via the condition that $\mathfrak{C}$ preserves the motivic filtration.  We have completed the proof of (a).

Now we prove (b); we follow the notation in the proof of Proposition \ref{prop2.1}. As in (a), it suffices to show that
\begin{equation}\label{8terms}
\left(\widetilde{\mathfrak{p}}_k - \widetilde{\mathfrak{p}}_{k-1}\right)\circ \mathfrak{C}^{-1}\circ \left(\widetilde{\mathfrak{p}}'_k - \widetilde{\mathfrak{p}}'_{k-1}\right)\circ \mathfrak{C}\circ \left(\widetilde{\mathfrak{p}}_k - \widetilde{\mathfrak{p}}_{k-1}\right) = \widetilde{\mathfrak{p}}_k - \widetilde{\mathfrak{p}}_{k-1};
\end{equation}
the other identity follows from a parallel argument.
Expanding the left-hand side of (\ref{8terms}), we obtain 8 terms, each of which is of the form
\begin{equation*}
\pm ~ \widetilde{\mathfrak{p}}_{i_1}\circ \mathfrak{C}^{-1}\circ \widetilde{\mathfrak{p}}'_{i_2}\circ \mathfrak{C}\circ \widetilde{\mathfrak{p}}_{i_3}, \quad i_1, i_2, i_3\in \{k-1,k\}.
\end{equation*}
We claim that 
\begin{equation}\label{each_term}
\widetilde{\mathfrak{p}}_{i_1}\circ \mathfrak{C}^{-1}\circ \widetilde{\mathfrak{p}}'_{i_2}\circ \mathfrak{C}\circ \widetilde{\mathfrak{p}}_{i_3} = \widetilde{\mathfrak{p}}_{\mathrm{min}\{i_1, i_2, i_3\}}.
\end{equation}
Since at least two of the three indices $i_1,i_2,i_3$ are equal, without loss of generality we treat here the case $i_2=i_3$. By \eqref{eq:semiorth} and the first equation of (\ref{key_assumption}), we have
\[
\mathfrak{C}\circ \mathfrak{p}_k = \mathfrak{p}_l' \circ \mathfrak{C}\circ \mathfrak{p}_k , \quad k\leq l,
\]
which further yields
\begin{align*}
    \widetilde{\mathfrak{p}}'_{i_2}\circ \mathfrak{C}\circ \widetilde{\mathfrak{p}}_{i_2} &= \Fp'_{i_2} \circ \cdots \circ \Fp'_{2d_f} \circ\FC \circ  \Fp_{i_2} \circ \cdots \circ \Fp_{2d_f}\\
    &= \FC \circ  \Fp_{i_2} \circ \cdots \circ \Fp_{2d_f}\\
    &= \FC \circ \widetilde{\Fp}_{i_2}.
\end{align*}
Consequently, (\ref{each_term}) follows:
\begin{align*}
    \widetilde{\mathfrak{p}}_{i_1}\circ \mathfrak{C}^{-1}\circ \widetilde{\mathfrak{p}}'_{i_2}\circ \mathfrak{C}\circ \widetilde{\mathfrak{p}}_{i_2} & =  \widetilde{\mathfrak{p}}_{i_1}\circ \mathfrak{C}^{-1}\circ \mathfrak{C}\circ \widetilde{\mathfrak{p}}_{i_2}\\
    & = \widetilde{\mathfrak{p}}_{i_1}\circ \widetilde{\mathfrak{p}}_{i_2}\\
    & = \widetilde{\Fp}_{\mathrm{min}\{i_1,i_2\}}.
\end{align*}
Finally, (\ref{8terms}) is deduced immediately from (\ref{each_term}), which completes the proof of (b).
\end{proof}

In many cases, it is more natural to work with a motivic perverse filtration, rather than a motivic decomposition. This is because they carry the same information, but the extra complexity of obtaining the orthogonal projectors from the semi-orthogonal projectors, as in the proof of Proposition \ref{prop2.1}, is formal and not essential. This in particular is the case for compactified Jacobian fibrations \cite[Section 2.5.5]{MSY} as we will also review below, where the semi-orthogonal projectors are of better form; see Theorem \ref{thm2.2}.

\subsection{Compactified Jacobians}\label{sec2.2}

Throughout this section, we work with the general setting that $C \to B$ is a flat family of integral projective curves of arithmetic genus $g$ with planar singularities over an irreducible base $B$; we will specialize to the case of spectral curves in Section \ref{Sec2_last}. We assume that the total space $C$ is nonsingular with a multi-section 
\[
D \subset C \to B
\]
of degree $r$ which is finite and flat over $B$.
For convenience, we also assume that for any~$d$, the degree $d$ compactified Jacobian~$\overline{J}^d_C$ is a quasi-projective variety. This is, for example, satisfied when $C \to B$ arises as a linear system of a nonsingular surface, where the quasi-projectivity follows from the fact that the compactified Jacobian arises as the moduli space of certain semistable torsion sheaves on the surface; see \emph{e.g.}~\cite{Lp}. We further assume that~$\overline{J}^d_C$ is nonsingular. In order to work with the universal family, we also need to consider the moduli stack of the degree $d$ compactified Jacobian, which is a $\BG_m$-gerbe over $\overline{J}^d_C$. Following \cite[Section 4.2]{MSY}, we rigidify this stack by imposing the condition that the universal family is \emph{trivialized} along the multi-section; this gives a Deligne--Mumford stack which is a $\mu_r$-gerbe over the scheme:
\[
\overline{\CJ}^d_C \to \overline{J}_C^d;
\]
see \cite[Proposition 4.1]{MSY}. The rigidification condition we imposed implies that there is a universal sheaf $\CF^d$ over $C\times_B \overline{\CJ}^d_C$ satisfying
\[
\mathrm{det}\left( p_{J*}\left(\CF^d|_{\overline{\CJ}^d_C\times_C D} \right)  \right) \simeq \CO_{\overline{\CJ}^d_C} \in \mathrm{Pic}\left( \overline{\CJ}^d_C \right).
\]
Here $p_J$ is the projection to the compactified Jacobian factor.

Now we consider two integers $d,e$. Arinkin's work \cite{A2} shows that the universal sheaves~$\CF^d, \CF^e$ induce a pair of Fourier--Mukai kernels
\begin{equation}\label{Poincare}
\CP_{e,d} \in D^\mathrm{b}\mathrm{Coh}\left( \overline{\CJ}^e_C\times \overline{\CJ}^d_C\right), \quad \CP^{-1}_{e,d} \in D^\mathrm{b}\mathrm{Coh}\left( \overline{\CJ}^d_C\times \overline{\CJ}^e_C\right).
\end{equation}
The associated Fourier--Mukai transforms are equivalences of categories
\[
\mathrm{FM}_{\CP_{e,d}}: D^\mathrm{b}\mathrm{Coh}(\overline{\CJ}^e_C)_{(-d)} \xrightarrow{~~\simeq~~} D^\mathrm{b}\mathrm{Coh}(\overline{\CJ}^{d}_C)_{(e)},\quad \mathrm{FM}_{\CP^{-1}_{e,d}}=\mathrm{FM}^{-1}_{\CP_{e,d}},
\]
where $D^\mathrm{b}\mathrm{Coh}(\overline{\CJ}^m_C)_{(k)}$ is the isotypic category consisting of objects for which the action of $\mu_r$ on the fibers is given by the character $\lambda \mapsto \lambda^k$ of $\mu_r$. The objects (\ref{Poincare}) induce (Chow-theoretic) Fourier transforms
\begin{align*}
    \mathfrak{F}_{e,d} & := \td\left(-T_{\overline{\CJ}_C^e \times_B \overline{\CJ}^d_C}\right) \cap \tau(\CP_{e,d}) \in \Chow_*(\overline{\CJ}_C^e \times_B \overline{\CJ}^d_C) = \Chow_*(\overline{J}_C^e \times_B \overline{J}^d_C),  \\
    \mathfrak{F}_{e,d}^{-1} & := \td(-(\pi_d \times_B \pi_e)^*T_B) \cap \tau(\CP_{e,d}^{-1}) \in \Chow_*(\overline{\CJ}_C^d \times_B \overline{\CJ}^e_C) = \Chow_*(\overline{J}_C^d \times_B \overline{J}^e_C).
\end{align*}
Here $\pi_\bullet$ is the natural projection to the base $B$, $T_{\overline{\CJ}_C^e \times_B \overline{\CJ}^d_C}$ is the virtual tangent bundle, $\tau(-)$ is the tau-class used in the Riemann--Roch theorem of Baum--Fulton--MacPherson \cite{BFM}, and we identify the (rational) Chow groups of the coarse moduli spaces and the gerbes; we refer to \cite[Section 4]{MSY} for more details. 

The correspondences $\mathfrak{F}_{e,d}, \mathfrak{F}^{-1}_{e,d}$ are of mixed degrees, and we have decompositions
\begin{gather*}
\mathfrak{F}_{e,d} = \sum_{i \geq 0} (\mathfrak{F}_{e,d})_i, \quad (\FF_{e, d})_i \in \Corr_B^{i - g}(\overline{J}_C^e, \overline{J}_C^d), \\
\mathfrak{F}_{e,d}^{-1} = \sum_{i \geq 0} (\mathfrak{F}^{-1}_{e,d})_i, \quad (\FF^{-1}_{e, d})_i \in \Corr_B^{i - g}(\overline{J}_C^d, \overline{J}_C^e).
\end{gather*}
For a fixed $d$, in order to define a motivic perverse filtration associated with $\pi_d: \overline{J}^d_C \to B$ using the Fourier transform, we need to choose a degree $e$, whose associated compactified Jacobian fibration will serve as the ``dual" side; see Step 1 of \cite[Section 4.4]{MSY}. In this paper, we choose this degree uniformly to be $1$; this choice will be important for applying the Abel--Jacobi map $C \to \overline{J}^1_C$ to build a connection between the Fourier transform and the normalized Chern classes of a universal bundle for a Hitchin-type moduli space. We define
\begin{equation}\label{motivic_p_k}
\mathfrak{p}_k:= \sum_{i \leq k} (\FF_{1,d})_i \circ (\FF^{-1}_{1,d})_{2g - i} \in  \Corr_B^0(\overline{J}_C^d , \overline{J}^d_C).
\end{equation}

The following theorem proven in \cite{MSY} confirmed the motivic decomposition conjecture for the compactified Jacobian fibration associated with $C\to B$, with the projectors constructed above.

\begin{thm}[{\cite[Corollary 4.6(i)]{MSY}}]\label{thm2.2}
For every $0\leq k \leq 2g$, the self-correspondence $\mathfrak{p}_k$ is a projector for $\pi_d: \overline{J}^d_C \to B$, and the motives
\[
P_kh(\overline{J}^d_C):= (\overline{J}^d_C, \mathfrak{p}_k, 0) \in \mathrm{CHM}(B), \quad 0\leq k \leq 2g
\]
form a motivic perverse filtration in the sense of Definition \ref{def_motivic_P}. Moreover, every submotive~$P_{k}h(\overline{J}^d_C)$ admits a canonical orthogonal complement $Q_{k+1}h(\overline{J}^d_C)$:
\[
h(\overline{J}^d_C) = P_kh(\overline{J}^d_C) \oplus Q_{k+1}h(\overline{J}^d_C), \quad Q_{k+1}h(\overline{J}^d_C) = (\overline{J}^d_C, \mathfrak{q}_{k+1}, 0)
\]
with
\[
\mathfrak{q}_{k+1} := [\Delta_{\overline{J}^d_C/B}] - \mathfrak{p}_k = \sum_{i\geq k+1}
(\FF_{1,d})_i \circ (\FF^{-1}_{1,d})_{2g - i}.
\]
\end{thm}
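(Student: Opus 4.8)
The statement is quoted as \cite[Corollary 4.5(i)]{MSY}, so the plan is to explain how it is derived from the Fourier theory of that paper, keeping the argument self-contained up to the inputs we are allowed to cite. The key structural input is the pair of Chow-theoretic Fourier transforms $\FF_{1,d}, \FF^{-1}_{1,d}$ built from Arinkin's Poincar\'e kernels, together with the fact that they are mutually inverse as relative correspondences up to the appropriate normalization: $\FF^{-1}_{1,d}\circ \FF_{1,d} = [\Delta_{\overline{J}^d_C/B}]$ and $\FF_{1,d}\circ \FF^{-1}_{1,d} = [\Delta_{\overline{J}^1_C/B}]$ in $\Corr^0_B$. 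The first thing I would record is the bidegree bookkeeping: $(\FF_{1,d})_i \in \Corr_B^{i-g}$ and $(\FF^{-1}_{1,d})_j \in \Corr_B^{j-g}$, so that $(\FF_{1,d})_i\circ(\FF^{-1}_{1,d})_j \in \Corr^{i+j-2g}_B$, which has degree $0$ precisely when $j = 2g-i$. This already explains why only the pairs $(i, 2g-i)$ appear in $\Fp_k$ and $\Fq_{k+1}$, and why $\Fp_k + \Fq_{k+1} = \sum_{i=0}^{2g}(\FF_{1,d})_i\circ(\FF^{-1}_{1,d})_{2g-i} = [\Delta_{\overline{J}^d_C/B}]$ once one knows the full sum collapses to the diagonal.

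The heart of the matter is to show that the ``graded pieces'' $\Fr_i := (\FF_{1,d})_i\circ(\FF^{-1}_{1,d})_{2g-i}$ are orthogonal idempotents, i.e. $\Fr_i\circ\Fr_j = \delta_{ij}\Fr_i$ and $\sum_i \Fr_i = [\Delta]$. The clean way is to use the relative Hard Lefschetz / $\mathfrak{sl}_2$ structure on the dual side: on $\overline{J}^1_C \to B$ the ample class $\sigma$ acts and, via $\FF$, intertwines cup product with $\sigma$ on one side with a ``Pontryagin-type'' operator on the other; this produces a bigrading of $h(\overline{J}^d_C)$ in $\mathrm{CHM}(B)$ for which the $\Fr_i$ are the projections onto the perverse-degree-$i$ part. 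Concretely, one verifies the commutation relations between the operators $\cup\sigma$, the dual operator, and the Fourier transform at the level of Chow correspondences — this is exactly the content of \cite[Section 4.3--4.4]{MSY} — and then standard $\mathfrak{sl}_2$-representation theory gives the idempotent decomposition $[\Delta_{\overline{J}^d_C/B}] = \sum_{i=0}^{2g}\Fr_i$ with $\Fr_i\circ\Fr_j = 0$ for $i\neq j$. Summing $\Fr_i$ over $i\le k$ then yields that $\Fp_k$ is a projector and that $\Fq_{k+1} = [\Delta] - \Fp_k = \sum_{i\ge k+1}\Fr_i$ is the complementary projector, giving the direct sum decomposition $h(\overline{J}^d_C) = P_kh \oplus Q_{k+1}h$; the semi-orthogonality relation $\Fp_{k+1}\circ\Fp_k = \Fp_k$ is then immediate from $\Fr_i\circ\Fr_j=\delta_{ij}\Fr_i$.

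Finally, one must check the \emph{Realization} axiom: under the Corti--Hanamura functor $\mathrm{CHM}(B)\to D^b_c(B)$, the projector $\Fr_i$ realizes the projection onto ${}^{\mathfrak p}\CH^i$ of $R\pi_{d*}\BQ[\dim - g]$, equivalently $\Fp_k$ realizes ${}^{\mathfrak p}\tau_{\le k + (\dim - g)}$. This follows because the homological realization of Arinkin's Fourier--Mukai kernel is known (by Ng\^o's support theorem for the integral-curve case, plus the explicit calculation in \cite{MSY}) to exchange the perverse filtration with the ``cohomological degree'' filtration on the dual side in the expected way, so the $\mathfrak{sl}_2$-weight decomposition above is carried to the perverse decomposition on cohomology. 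The main obstacle — and the reason this is genuinely a theorem rather than a formality — is establishing the Fourier-intertwining identities at the level of \emph{Chow} correspondences (not just cohomology) over the \emph{singular} total space $\overline{J}_C$: one needs the Baum--Fulton--MacPherson Riemann--Roch formalism and the precise normalization by $\td(-T)$ to make the Chow-level Fourier transform well-behaved, and one needs Arinkin's delicate construction of $\CP_{e,d}$ over loci of non-reduced (but still integral, locally planar) curves to guarantee invertibility globally over $B$. Since all of this is carried out in \cite{MSY}, here I would simply invoke \cite[Corollary 4.5]{MSY} after recalling the definitions, and move on.
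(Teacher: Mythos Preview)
Your sketch has the right inputs (Arinkin's kernel, the Chow-level Fourier transforms, Ng\^o support for the realization) and the right output, but the mechanism you describe is not the one in \cite{MSY}, and one of your intermediate claims is stronger than what is actually proved there.

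First, the paper does \emph{not} proceed via a relative Hard Lefschetz/$\mathfrak{sl}_2$-action. The engine is a one-sided \emph{Fourier vanishing}
\[
(\FF_{e,d}^{-1})_i \circ (\FF_{e,d})_j = 0, \quad i+j < 2g,
\]
established in \cite[Section 4.4]{MSY} by scaling the Poincar\'e sheaf with Adams operations $\psi^N$ in $K$-theory and applying the $\tau$-class. No intertwining with $\cup\sigma$ or $\mathfrak{sl}_2$-representation theory enters. From (FV) the projector property is immediate: writing $\Fq_{k+1}=\sum_{l\ge k+1}(\FF_{1,d})_l\circ(\FF^{-1}_{1,d})_{2g-l}$ and $\Fp_k=\sum_{i\le k}(\FF_{1,d})_i\circ(\FF^{-1}_{1,d})_{2g-i}$, every term of $\Fq_{k+1}\circ\Fp_k$ contains a factor $(\FF^{-1}_{1,d})_{2g-l}\circ(\FF_{1,d})_i$ with $(2g-l)+i<2g$, hence vanishes; then $\Fp_k^2=([\Delta]-\Fq_{k+1})\Fp_k=\Fp_k$ and the splitting $h=P_kh\oplus Q_{k+1}h$ follows.

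Second, your assertion that the graded pieces $\Fr_i=(\FF_{1,d})_i\circ(\FF^{-1}_{1,d})_{2g-i}$ are \emph{orthogonal} idempotents is not what \cite{MSY} proves. The Fourier vanishing is asymmetric: it kills $\Fr_l\circ\Fr_i$ for $i<l$ but says nothing about $\Fr_i\circ\Fr_l$ for $i<l$. What one gets is exactly the semi-orthogonality $\Fp_{k+1}\circ\Fp_k=\Fp_k$ (equivalently $\Fq_{k+1}\circ\Fp_k=0$), which is all the theorem claims and all that is needed downstream; the present paper emphasizes this point in Section~\ref{sec2.1} and in the remark that for compactified Jacobians ``the semi-orthogonal projectors are of better form.'' So you should drop the $\mathfrak{sl}_2$ route and the full-orthogonality claim, and instead invoke (FV) plus the two-line degree argument above; the realization step via Ng\^o's support theorem is as you say.
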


The orthogonal complements $Q_\bullet h(\overline{J}^d_C)$ allow us to directly formulate the condition that a correspondence to $h(\overline{J}^d_C)$ factors through a certain piece of the motivic perverse filtration; we refer to the statements in Theorem \ref{thm2.3} below as examples.

Now we consider two natural operations on the relative Chow motive $h(\overline{J}^d_C)$. The first is induced by the correspondence $(\mathfrak{F}_{1,d})_k$:
\begin{equation} \label{eq:fourierk}
(\mathfrak{F}_{1,d})_k: h(\overline{J}^1_C)(g-k) \to h(\overline{J}^d_C).
\end{equation}
The second is the multiplicative structure given by cup-product:
\[
\cup: h(\overline{J}^d_C) \times h(\overline{J}^d_C) \to h(\overline{J}^d_C).
\]

The following theorem shows the compatibility between the motivic perverse filtration of Theorem \ref{thm2.2} and the two operations above.

\begin{thm}\label{thm2.3}
The following hold for $\pi_d: \overline{J}^d_C \to B$.
\begin{enumerate}
    \item[(a)] The morphism $(\mathfrak{F}_{1,d})_k$ in \eqref{eq:fourierk} factors through
\[
(\mathfrak{F}_{1,d})_k: h(\overline{J}^1_C)(g-k) \to P_kh(\overline{J}^d_C).
\]
In other words, the following composition is zero:
\[
h(\overline{J}^1_C)(g-k) \xrightarrow{(\mathfrak{F}_{1,d})_k} h(\overline{J}^d_C) \to Q_{k+1}h(\overline{J}^d_C).
\]
\item[(b)] The motivic perverse filtration is multiplicative:
\[
\cup: P_k h(\overline{J}^d_C) \times P_lh(\overline{J}^d_C) \to P_{k+l}h(\overline{J}_C^d).
\]
In other words, the following composition is zero:
\[
P_k h(\overline{J}^d_C) \times P_lh(\overline{J}^d_C) \xrightarrow{\cup} h(\overline{J}^d_C) \to Q_{k+l+1}h(\overline{J}_C^d).
\]
\end{enumerate}
\end{thm}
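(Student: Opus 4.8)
The plan is to leverage the Fourier-theoretic description of the projectors $\mathfrak{p}_k$ in \eqref{motivic_p_k} together with the two fundamental structural facts about the Arinkin–Fourier transform on compactified Jacobians: its interaction with the perverse (weight) grading and its exchange of cup-product with convolution (Pontryagin product). For part (a), I would argue directly: by definition $\mathfrak{q}_{k+1} = \sum_{i \geq k+1} (\FF_{1,d})_i \circ (\FF^{-1}_{1,d})_{2g-i}$, so the composition in question is $\sum_{i\geq k+1}(\FF_{1,d})_i \circ (\FF^{-1}_{1,d})_{2g-i}\circ (\FF_{1,d})_k$. The key input is that $\FF^{-1}_{1,d}\circ \FF_{1,d} = [\Delta_{\overline{J}^1_C/B}]$ (the Fourier transforms are mutually inverse at the motivic level, as established in \cite{MSY}), together with the graded refinement: the component-wise composition $(\FF^{-1}_{1,d})_{2g-i}\circ (\FF_{1,d})_k$ vanishes unless $i=k$, because the Fourier transform shifts the perverse degree in a controlled way — the degree-$j$ piece $(\FF_{1,d})_j$ lands in $P_j / P_{j-1}$ and $(\FF^{-1}_{1,d})_{2g-j}$ is the corresponding projection, so cross-terms die for perversity reasons. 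Since the sum ranges over $i \geq k+1$, every term is a cross-term and hence zero.

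For part (b), the strategy is to convert cup-product into Pontryagin product via the Fourier transform and then use that $\overline{J}^1_C \to B$ is (generically) smooth with abelian-variety fibers, where the Pontryagin product has the trivial multiplicativity with respect to the perverse grading. Concretely: by Theorem \ref{thm2.3}(a), $P_k h(\overline{J}^d_C)$ is generated by the images of $(\FF_{1,d})_{k'}$ for $k' \leq k$, so it suffices to check that the cup-product of $(\FF_{1,d})_k(a)$ and $(\FF_{1,d})_l(b)$ lands in $P_{k+l}$. Here I would invoke the exchange formula from \cite{MSY}: the Fourier transform intertwines cup-product on $\overline{J}^d_C$ with (a twist of) the Pontryagin product on the dual side $\overline{J}^1_C$ — more precisely, $(\FF_{1,d})(a\cup b)$ is expressed via the coproduct/Pontryagin structure of $h(\overline{J}^1_C)$, which respects the weight grading additively because on the smooth locus this is just the standard fact that the Pontryagin product $H_i \otimes H_j \to H_{i+j}$ shifts homological (hence perverse, via the Jacobian's self-duality) degree additively, and this extends over the singular locus by the compatibility of the Arinkin sheaf with the perverse filtration proven in \cite{MSY}. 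Tracking the degree bookkeeping then gives exactly $P_{k+l}$.

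The main obstacle I expect is part (b): making the "cup-product $\leftrightarrow$ Pontryagin product" exchange precise at the motivic level over a singular base, and checking that the relevant multiplicativity of the Pontryagin product with respect to the motivic perverse filtration actually holds for the compactified (not just smooth) Jacobian. On the smooth locus this is classical (Künnemann-type decomposition of abelian schemes), but extending it across the discriminant requires knowing that the convolution structure on $h(\overline{J}^1_C)$ — which should be encoded by a relative binary correspondence, i.e. an element of $\Corr_B^*(\overline{J}^1_C, \overline{J}^1_C; \overline{J}^1_C)$ built from the addition-type correspondence on compactified Jacobians — is compatible with the $\mathfrak{p}_k$. I would handle this by citing the relevant multiplicativity statement from \cite{MSY} for the compactified Jacobian (which in turn rests on the support theorem of \cite{Ngo} and the fact that the Fourier kernel $\CP_{e,d}$ is supported in the expected perverse amplitude), and then reducing part (b) to a purely formal manipulation of graded correspondences. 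Part (a), by contrast, should be essentially a one-line consequence of $\FF^{-1}\circ\FF = \mathrm{id}$ together with the graded vanishing of cross-terms, and I would present it first as a warm-up before the more delicate part (b).
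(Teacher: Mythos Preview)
Your approach to part (a) is essentially the paper's: the composition $\mathfrak{q}_{k+1}\circ(\FF_{1,d})_k$ expands as $\sum_{i\geq k+1}(\FF_{1,d})_i\circ(\FF^{-1}_{1,d})_{2g-i}\circ(\FF_{1,d})_k$, and each term vanishes by the Fourier vanishing (FV) of \cite{MSY}, since $(2g-i)+k<2g$ for $i\geq k+1$. One caution: your claim that $(\FF^{-1}_{1,d})_{2g-i}\circ(\FF_{1,d})_k$ vanishes \emph{unless $i=k$} is too strong --- (FV) only gives the one-sided vanishing $i>k$, and the justification ``for perversity reasons'' is circular (the perversity statement is what (FV) \emph{proves}, via Adams operations in $K$-theory). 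But the one-sided vanishing is all that is needed here.

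For part (b) there is a genuine gap. Your plan is to transport cup-product to a Pontryagin/convolution product on the dual side and invoke multiplicativity there from \cite{MSY}. The paper explicitly points out why this does not work directly: the convolution kernel $\FK_{e_1,e_2,d}$ maps $\overline{J}^{e_1}_C\times\overline{J}^{e_2}_C$ to $\overline{J}^{e_1+e_2}_C$, so one cannot stay on $\overline{J}^1_C$ throughout --- it is impossible to arrange $e_1=e_2=e_1+e_2=1$. Consequently the multiplicativity statement \cite[Corollary 4.5(iii)]{MSY} does not apply, and your hoped-for citation is exactly what fails. What is missing is a Fourier vanishing that compares components across \emph{different} degrees: the paper proves a generalized vanishing (FV1), namely $(\FF^{-1}_{e',d})_i\circ(\FF_{e,d})_j=0$ for $i+j<2g$ whenever $e$ is coprime to $r$, via a refinement of the Adams-operation argument. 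With this in hand, one expands $\mathfrak{q}_{k+l+1}\circ[\Delta^{\mathrm{sm}}]\circ(\mathfrak{p}_k\times\mathfrak{p}_l)$ using the convolution identity \eqref{eq:convker} with $e_1+e_2$ chosen coprime to $r$, applies (FV1) to constrain the indices, and finishes by a degree count showing the expression lies in $\Corr_B^{\geq 1}$ while it must lie in $\Corr_B^0$. Your outline has the right shape (Fourier exchange of products) but misses this cross-degree vanishing, which is the new technical input and cannot be replaced by a support-theorem/smooth-locus argument.
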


Statement (a) was proven in \cite[Corollary 4.6(iv)]{MSY} by setting $e=1$. However, (b) does not follow directly from the multiplicativity result \cite[Corollary 4.6(iii)]{MSY} (since we cannot make $e_1 = e_2=e_1+e_2 = 1$); we postpone the proof to Section \ref{Section2.2}. The following result is an immediate consequence of the multiplicativity.

\begin{cor}\label{cor2.4}
Assume that $\alpha \in \mathrm{CH}^l(\overline{J}^d_C)$ lies in the $k$-th piece of the perverse filtration, \emph{i.e.},
\[
\alpha \in \mathrm{CH}^l(P_kh(\overline{J}^d_C)).
\]
Then cupping with $\alpha$ interacts with the motivic perverse filtration as follows:
\[
\alpha: P_ih(\overline{J}^d_C) \to P_{i+k}h(\overline{J}^d_C)(l).
\]
\end{cor}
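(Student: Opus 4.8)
The plan is to deduce Corollary \ref{cor2.4} directly from the multiplicativity statement of Theorem \ref{thm2.3}(b). The class $\alpha\in \mathrm{CH}^l(P_kh(\overline{J}^d_C))$ is, by definition, a morphism $h(B/B)\to P_kh(\overline{J}^d_C)(l)$; that is, $\mathfrak{p}_k\circ\alpha = \alpha$ after the appropriate Tate twist. Cupping with $\alpha$ is the morphism $h(\overline{J}^d_C)\to h(\overline{J}^d_C)(l)$ obtained by feeding $\alpha$ into one slot of the binary cup-product correspondence $[\Delta^{\mathrm{sm}}_{\overline{J}^d_C/B}]$. So the statement to prove is that, for any $i$, the composite
\[
P_ih(\overline{J}^d_C)\xrightarrow{~\alpha\,\cup\,-~} h(\overline{J}^d_C)(l) \to Q_{i+k+1}h(\overline{J}^d_C)(l)
\]
vanishes.

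The key step is to factor $\alpha\cup-$ through $P_ih(\overline{J}^d_C)\times P_kh(\overline{J}^d_C)$. Precompose the cup-product with the inclusion $P_ih(\overline{J}^d_C)\hookrightarrow h(\overline{J}^d_C)$ in the first variable and with $\alpha\colon h(B/B)\to P_kh(\overline{J}^d_C)(l)$ in the second variable; since $h(B/B)$ is the unit, pairing against $\alpha$ in the second slot of $\cup$ is exactly the operator $\alpha\cup-$ restricted to $P_ih(\overline{J}^d_C)$, now landing in the image of
\[
\cup\colon P_ih(\overline{J}^d_C)\times P_kh(\overline{J}^d_C)(l)\to h(\overline{J}^d_C)(l).
\]
Theorem \ref{thm2.3}(b) (with the roles of $k,l$ there played by $i$ and $k$ here, and everything Tate-twisted by $l$, which does not affect projectors) says precisely that this composite, followed by projection to $Q_{i+k+1}h(\overline{J}^d_C)(l)$, is zero. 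Hence $\mathfrak{q}_{i+k+1}\circ(\alpha\cup-)\circ\mathfrak{p}_i = 0$, which is the assertion that $\alpha\cup-$ sends $P_ih(\overline{J}^d_C)$ into $P_{i+k}h(\overline{J}^d_C)(l)$.

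The only points requiring care are bookkeeping: that the Tate twist $(l)$ commutes with all the projectors and does not shift the perverse index (it does not, since $\mathfrak{p}_k$ is a degree-zero correspondence unaffected by twisting the motive); that plugging the unit $h(B/B)$ into one slot of the binary correspondence $[\Delta^{\mathrm{sm}}_{\overline{J}^d_C/B}]$ together with $\alpha$ indeed recovers the cup-product operator $\alpha\cup-$, which is the defining property of the multiplicative structure recalled in Section \ref{sec2.1}; and that $\alpha=\mathfrak{p}_k\circ\alpha$ lets us replace the full motive by $P_kh(\overline{J}^d_C)$ in the second slot. None of these is a genuine obstacle — the content is entirely in Theorem \ref{thm2.3}(b), and this corollary is a formal unwinding of definitions. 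I would therefore expect the proof to be a single short paragraph citing Theorem \ref{thm2.3}(b) and the definition of $\mathrm{CH}^l$ of a motive.
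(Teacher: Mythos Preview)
Your proposal is correct and follows essentially the same approach as the paper: both reduce the claim to the multiplicativity relation of Theorem~\ref{thm2.3}(b) by inserting the class $\alpha$ (which satisfies $\mathfrak{p}_k(\alpha)=\alpha$) into one slot of the small-diagonal binary correspondence. The paper carries this out via the explicit cycle-theoretic computation $p_{23*}\bigl(p_1^*\alpha \cap (\mathfrak{q}_{i+k+1}\circ[\Delta^{\mathrm{sm}}_{\overline{J}^d_C/B}]\circ(\mathfrak{p}_k\times\mathfrak{p}_i))\bigr)=\mathfrak{q}_{i+k+1}\circ\Delta_*\alpha\circ\mathfrak{p}_i$, while you phrase the same manipulation categorically in terms of morphisms of motives and the unit object~$h(B/B)$; the content is identical.
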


\begin{proof}
It suffices to prove the relation 
\[
\mathfrak{q}_{i + k +1}\circ \Delta_*\alpha \circ \mathfrak{p}_i = 0
\]
on the relative product $\overline{J}^d_C\times_B \overline{J}^d_C$, where $\Delta: \overline{J}^d_C \hookrightarrow \overline{J}^d_C \times_B \overline{J}^d_C$ is the diagonal embedding. Theorem \ref{thm2.3}(b) reads
\[
\mathfrak{q}_{i + k +1}\circ [\Delta^{\mathrm{sm}}_{\overline{J}_C^d/B}]\circ \left( \mathfrak{p}_k \times \mathfrak{p}_i \right)=0
\]
which is a relation on the relative triple product. Therefore, we have
\begin{align*}
    0 &= p_{23*}\left(p_1^* \alpha \cap \left( \mathfrak{q}_{i + k +1}\circ [\Delta^{\mathrm{sm}}_{\overline{J}_C^d/B}] \circ \left( \mathfrak{p}_k \times \mathfrak{p}_i \right)\right) \right)\\
    & =\mathfrak{q}_{i + k +1}\circ p_{23*}\left(p_1^* \alpha \cap \left( [\Delta^{\mathrm{sm}}_{\overline{J}_C^d/B}] \circ \left( \mathfrak{p}_k \times \mathfrak{p}_i \right)\right) \right)\\
    & = \mathfrak{q}_{i + k +1}\circ \Delta_*{\mathfrak{p}_k(\alpha)} \circ \mathfrak{p}_i\\
    & = \mathfrak{q}_{i + k +1}\circ \Delta_*{\alpha} \circ \mathfrak{p}_i.
\end{align*}
Here $p_{ij},p_i$ are the natural projections from the relative triple product $\overline{J}^d_C\times_B \overline{J}^d_C\times_B \overline{J}^d_C$ to the corresponding factors. This completes the proof.
\end{proof}

Next, we discuss the motivic $\chi$-independence phenomenon for compactified Jacobian fibrations. For two integers $d,d'$, we consider the correspondence
\[
\mathfrak{C}_{d,d'}:=  \sum_{i=0}^{2g} (\FF_{1,d'})_i \circ (\FF^{-1}_{1,d})_{2g - i} \in  \Corr_B^0(\overline{J}_C^d, \overline{J}^{d'}_C).
\]

Recall that $r$ is the degree of the multisection $D\subset C$ we fixed at the beginning.

\begin{thm}\label{thm2.5}
Assume $d,d'$ are two integers coprime to $r$. Then $\mathfrak{C}_{d,d'}$ induces an isomorphism 
\[
\mathfrak{C}_{d,d'}: h(\overline{J}^d_C) \xrightarrow{~~\simeq~~} h(\overline{J}^{d'}_C)
\]
preserving the motivic perverse filtrations
\[
\mathfrak{C}_{d,d'}: P_kh(\overline{J}^d_C) \xrightarrow{~~\simeq~~} P_kh(\overline{J}^{d'}_C).
\]
Moreover, the inverse of $\mathfrak{C}_{d,d'}$ is given by $\mathfrak{C}_{d',d}$,
\begin{equation} \label{eq:chiisom}
\mathfrak{C}_{d',d} \circ \mathfrak{C}_{d,d'} = [\Delta_{\overline{J}^d_C/B}], \quad \mathfrak{C}_{d,d'} \circ \mathfrak{C}_{d',d} = [\Delta_{\overline{J}^{d'}_C/B}].
\end{equation}
\end{thm}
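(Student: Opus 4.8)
The plan is to reduce everything to the Fourier-theoretic identities already established in \cite{MSY}. The key structural input is the inversion formula for the Arinkin--Fourier transform, which on Chow groups takes the form $\mathfrak{F}_{e,d} \circ \mathfrak{F}^{-1}_{e,d} = [\Delta_{\overline{J}^d_C/B}]$ together with the analogous identity for the composition in the other order (in its appropriate graded/twisted form, as recorded in the proof of the motivic decomposition in \cite[Section 4]{MSY}); the degree $e$ here will be taken to be $1$ throughout as fixed in \eqref{motivic_p_k}. First I would expand the composition $\mathfrak{C}_{d',d} \circ \mathfrak{C}_{d,d'}$ using the definitions: it is $\sum_{i,j} (\FF_{1,d})_j \circ (\FF^{-1}_{1,d'})_{2g-j} \circ (\FF_{1,d'})_i \circ (\FF^{-1}_{1,d})_{2g-i}$. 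The middle composition $\sum_j (\FF^{-1}_{1,d'})_{2g-j}\circ \sum_i(\FF_{1,d'})_i$ is, up to reindexing, a graded piece of $\mathfrak{F}^{-1}_{1,d'}\circ \mathfrak{F}_{1,d'}$, and the inversion formula lets one collapse it. Bookkeeping the degrees via the grading $(\FF_{1,d})_i \in \Corr^{i-g}_B$ shows the surviving terms are exactly those with matched indices, leaving $\sum_i (\FF_{1,d})_i \circ (\FF^{-1}_{1,d})_{2g-i} = [\Delta_{\overline{J}^d_C/B}]$ by the same inversion formula (this last sum is precisely $\mathfrak{p}_{2g} = [\Delta]$ in the notation of \eqref{motivic_p_k} and Theorem \ref{thm2.2}). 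The symmetric computation handles $\mathfrak{C}_{d,d'}\circ\mathfrak{C}_{d',d}$, giving \eqref{eq:chiisom}; the coprimality of $d,d'$ to $r$ is what guarantees the relevant isotypic pieces for the $\mu_r$-gerbe match up so that the Fourier kernels $\CP_{1,d'}, \CP_{1,d}$ compose correctly (equivalently, that $\mathfrak{F}_{1,d'}$ and $\mathfrak{F}^{-1}_{1,d}$ are composable as equivalences between the right isotypic categories).

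Given \eqref{eq:chiisom}, the isomorphism statement $\mathfrak{C}_{d,d'}\colon h(\overline{J}^d_C)\xrightarrow{\simeq} h(\overline{J}^{d'}_C)$ is immediate: $\mathfrak{C}_{d,d'}$ and $\mathfrak{C}_{d',d}$ are two-sided inverses in $\mathrm{Corr}^0_B$, hence mutually inverse morphisms in $\mathrm{CHM}(B)$. It remains to show $\mathfrak{C}_{d,d'}$ carries $P_kh(\overline{J}^d_C)$ isomorphically onto $P_kh(\overline{J}^{d'}_C)$. For this I would verify the defining condition \eqref{key_assumption}, namely $\mathfrak{q}'_{k+1}\circ \mathfrak{C}_{d,d'}\circ \mathfrak{p}_k = 0$ (and symmetrically with $d,d'$ swapped), where $\mathfrak{q}'_{k+1} = \sum_{i\geq k+1} (\FF_{1,d'})_i\circ(\FF^{-1}_{1,d'})_{2g-i}$ by Theorem \ref{thm2.2}. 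Again this is pure index bookkeeping: expanding $\mathfrak{q}'_{k+1}\circ\mathfrak{C}_{d,d'}\circ\mathfrak{p}_k$ and using the inversion formula to collapse the internal $\mathfrak{F}^{-1}_{1,d'}\circ\mathfrak{F}_{1,d'}$ and $\mathfrak{F}^{-1}_{1,d}\circ\mathfrak{F}_{1,d}$ strings, one finds the grading constraints force $i\geq k+1$ (from $\mathfrak{q}'$) and $i\leq k$ (from $\mathfrak{p}_k$) simultaneously, so every term vanishes. Then Proposition \ref{prop2.2}(a) (applied with $X = \overline{J}^d_C$, $X' = \overline{J}^{d'}_C$) upgrades \eqref{key_assumption} to the desired filtered isomorphism $P_kh(\overline{J}^d_C)\simeq P_kh(\overline{J}^{d'}_C)$ compatible with the inclusions.

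The main obstacle I anticipate is not conceptual but organizational: making the grading conventions of the Fourier transform completely precise and consistent — in particular tracking how $(\FF_{e,d})_i$ lands in $\Corr^{i-g}_B$, how the composition of two such correspondences shifts the index, and confirming that the ``diagonal'' terms in the double sum are exactly the ones picked out by the inversion formula while all off-diagonal terms vanish for degree reasons. A secondary point requiring care is the $\mu_r$-gerbe and isotypic-component bookkeeping: one must check that the coprimality hypothesis $(d,r) = (d',r) = 1$ is exactly what makes the FM kernels $\CP_{1,d}$, $\CP_{1,d'}$, $\CP^{-1}_{1,d}$, $\CP^{-1}_{1,d'}$ compose in the required orders as functors between the isotypic categories $D^b\mathrm{Coh}(-)_{(k)}$, so that the Chow-theoretic inversion identities quoted from \cite{MSY} genuinely apply. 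Once these are pinned down, the rest of the argument is formal manipulation of orthogonal and semi-orthogonal projectors of the type already carried out in the proofs of Propositions \ref{prop2.1} and \ref{prop2.2}.
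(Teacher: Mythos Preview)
Your approach to the perverse filtration compatibility (the vanishing $\mathfrak{q}'_{k+1}\circ\mathfrak{C}_{d,d'}\circ\mathfrak{p}_k=0$) is essentially the paper's, modulo your calling the tool ``inversion formula'' when what is actually used is the Fourier vanishing $(\FF^{-1}_{1,*})_a\circ(\FF_{1,*})_b=0$ for $a+b<2g$. That part is fine.

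The gap is in your proof of the identities \eqref{eq:chiisom}. You write the composition as
\[
\mathfrak{C}_{d',d}\circ\mathfrak{C}_{d,d'}=\sum_{i,j}(\FF_{1,d})_j\circ(\FF^{-1}_{1,d'})_{2g-j}\circ(\FF_{1,d'})_i\circ(\FF^{-1}_{1,d})_{2g-i}
\]
and then propose to ``collapse the middle'' via $\FF^{-1}_{1,d'}\circ\FF_{1,d'}=[\Delta]$. But the index $j$ in $(\FF^{-1}_{1,d'})_{2g-j}$ is tied to the outer factor $(\FF_{1,d})_j$, and likewise $i$ is tied to $(\FF^{-1}_{1,d})_{2g-i}$; you cannot sum the middle independently of the outer pieces. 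The standard (FV) kills the terms with $i<j$, but nothing you cite kills the terms with $i>j$: each term already sits in $\Corr_B^0$, so there is no degree obstruction, contrary to your claim that ``off-diagonal terms vanish for degree reasons.''

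What the paper does is reverse the logic: it starts from the \emph{unconstrained} expansion
\[
[\Delta_{\overline{J}^d_C/B}]=\FF_{1,d}\circ\FF^{-1}_{1,d'}\circ\FF_{1,d'}\circ\FF^{-1}_{1,d}=\sum_{i,j,l,m}(\FF_{1,d})_i\circ(\FF^{-1}_{1,d'})_j\circ(\FF_{1,d'})_l\circ(\FF^{-1}_{1,d})_m,
\]
and then cuts this down to $\FC_{d',d}\circ\FC_{d,d'}$ by proving a \emph{new} vanishing (FV2): $(\FF_{e,d})_i\circ(\FF^{-1}_{e,d'})_j=0$ for $i+j<2g$ whenever $d$ is coprime to $r$. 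This is the missing ingredient, established separately via Adams operations in equivariant $K$-theory; it is precisely here that the hypothesis $(d,r)=(d',r)=1$ enters. Your attribution of the coprimality condition to isotypic-category composability is off the mark: at the Chow level the $\FF_{e,d}$ live on the coarse spaces $\overline{J}^e_C\times_B\overline{J}^d_C$ and compose unconditionally; the gerbe weights matter only inside the proof of (FV2).
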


Both Theorem \ref{thm2.3}(b) and Theorem \ref{thm2.5} are proven in the next section, which rely on a generalized version of the Fourier vanishing established in \cite{MSY}.

\subsection{Proofs of Theorem \ref{thm2.3}(b) and Theorem \ref{thm2.5}}\label{Section2.2} In \cite[Section 4.4]{MSY}, we establish a series of relations between the Fourier components, known as Fourier vanishing (FV). For two integers $d, e$, this vanishing states that
\[
\tag{FV} (\FF_{e, d}^{-1})_i \circ (\FF_{e, d})_j = 0, \quad i + j < 2g.
\]
In this section, we prove a generalized Fourier vanishing given as follows: 

\begin{prop}
Let $d, d', e, e'$ be integers. Then if $e$ is coprime to $r$, we have
\begin{equation*}
\tag{FV1} (\FF_{e', d}^{-1})_i \circ (\FF_{e, d})_j = 0, \quad i + j < 2g.
\end{equation*}
If $d$ is coprime to $r$, we have
\begin{equation*}
\tag{FV2} (\FF_{e, d})_i \circ (\FF_{e, d'}^{-1})_j = 0, \quad i + j < 2g.
\end{equation*}
\end{prop}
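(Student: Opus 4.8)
The generalized Fourier vanishings (FV1) and (FV2) should follow by reducing to the original (FV) from \cite{MSY} through a change of degree. The key observation is that composing with a Fourier transform of the form $\FF_{e', e}$ (or its inverse) translates between compactified Jacobians of different degrees, and the composition of two such Fourier kernels differs from the identity only by a shift in degree. More precisely, I would first establish the ``transitivity'' relations for the Fourier kernels: the Poincaré-type sheaves satisfy $\CP_{e',d} \simeq \CP_{e',e} \star \CP_{e,d}$ (convolution of kernels) up to the gerbe-theoretic/isotypic bookkeeping, and correspondingly at the Chow level one has
\[
\FF_{e',d} = \FF_{e',e} \circ \FF_{e,d}, \qquad \FF_{e',d}^{-1} = \FF_{e,d}^{-1}\circ \FF_{e',e}^{-1},
\]
at least after the appropriate Todd-class corrections are absorbed — this is exactly the kind of bookkeeping carried out in \cite[Section 4]{MSY}. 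The coprimality hypothesis on $e$ (resp.\ $d$) is what guarantees that $\overline{J}^e_C$ (resp.\ $\overline{J}^d_C$) carries a genuine universal sheaf after rigidification, so that $\FF_{e',e}$ and its inverse are honest mutually inverse correspondences with $\FF_{e,e'}\circ\FF_{e',e} = [\Delta_{\overline{J}^{e'}_C/B}]$ in the sense of \eqref{eq:chiisom}.

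For (FV1): I would write $(\FF_{e',d}^{-1})_i \circ (\FF_{e,d})_j$ and insert the identity $[\Delta_{\overline{J}^{e}_C/B}] = \FF_{e,e'}\circ \FF_{e',e}$ between the two factors is not quite what is wanted; instead, observe that $\FF_{e,d} = \FF_{e,e'}\circ \FF_{e',d}$ by transitivity (using that $e$ is coprime to $r$ so the intermediate kernel is invertible), so that on graded pieces
\[
(\FF_{e,d})_j = \sum_{a+b = j} (\FF_{e,e'})_a \circ (\FF_{e',d})_b.
\]
Then $(\FF_{e',d}^{-1})_i \circ (\FF_{e,d})_j = \sum_{a+b=j}(\FF_{e',d}^{-1})_i\circ (\FF_{e,e'})_a \circ (\FF_{e',d})_b$, and the middle-and-right composition $(\FF_{e,e'})_a\circ(\FF_{e',d})_b$ reassembles into $(\FF_{e,d})_{a+b}$; this circular reasoning needs to be replaced. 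The correct move is: apply $\FF_{e,e'}$ on the \emph{left} of the whole expression and use invertibility. Since $\FF_{e,e'}\circ \FF_{e',d}^{-1} = \FF_{e,d}^{-1}$ (transitivity of inverses) and the degree bookkeeping matches, $\FF_{e,e'}\circ$ applied to $(\FF_{e',d}^{-1})_i \circ (\FF_{e,d})_j$ produces (a graded component of) $\FF_{e,d}^{-1}\circ \FF_{e,d}$ restricted to the bidegrees summing to $i+j < 2g$, which vanishes by the original (FV) for the pair $(e,d)$. Since $\FF_{e,e'}$ is invertible (here is where coprimality of $e$ enters), the original expression vanishes. The argument for (FV2) is completely parallel, using $\FF_{e,d} = \FF_{e,d}\circ\FF_{d',d}^{-1}\circ\FF_{d',d}$... rather, one uses transitivity on the $d$-side: $\FF_{e,d'}^{-1} = \FF_{d,d'}\circ \FF_{e,d}^{-1}$ together with $\FF_{e,d}\circ\FF_{d',d}$ assembling to a single Fourier transform, and multiply on the right by the invertible $\FF_{d',d'}$-type correspondence (using coprimality of $d$), reducing to (FV) for the pair $(e,d)$.

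**The main obstacle.**
The essential difficulty is the precise formulation and proof of the transitivity relations $\FF_{e',d} = \FF_{e',e}\circ \FF_{e,d}$ and the corresponding statement for inverses, \emph{including all the Todd-class and $\tau$-class corrections} and the $\mu_r$-gerbe/isotypic decomposition bookkeeping. On the derived-category side this is the statement that convolution of the Arinkin Poincaré kernels is again an Arinkin kernel (with shifted degree parameters), which should be extractable from the functoriality of Arinkin's construction \cite{A2}; descending this to Chow groups via the Baum--Fulton--MacPherson Riemann--Roch requires checking that the virtual-tangent-bundle twists compose correctly, in the style of \cite[Section 4.2--4.4]{MSY}. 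A secondary point is verifying that the graded-degree bookkeeping is consistent: one must check that under the composition, a component of bidegree $(i,j)$ with $i+j<2g$ on one side maps into components with total degree $<2g$ on the other side, so that the original (FV) genuinely applies after multiplying by the invertible correction correspondence. Neither of these is conceptually deep, but both require care, and I would expect the write-up to consist largely of tracking these corrections through the constructions of \cite{MSY}.
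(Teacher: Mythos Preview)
Your approach has genuine gaps and I do not see how to complete it along the lines you sketch. The first issue is the transitivity relation $\FF_{e',d}=\FF_{e',e}\circ\FF_{e,d}$ (and its analogue for inverses): the Arinkin kernel $\CP_{e,d}$ is given by an explicit formula in both universal sheaves $\CF^e,\CF^d$, and no convolution identity $\CP_{e',e}\star\CP_{e,d}\simeq\CP_{e',d}$ is established in \cite{A2} or \cite{MSY}; even over the smooth locus this is a nontrivial computation with normalizations, and you would be building the whole argument on it. The second and more serious issue is that, even granting transitivity, the graded-degree bookkeeping does not close. The correspondence $\FF_{e',e}$ is of \emph{mixed} degree, with components $(\FF_{e',e})_a\in\Corr_B^{a-g}$ for all $0\leq a\leq 2g$; multiplying $(\FF_{e',d}^{-1})_i\circ(\FF_{e,d})_j$ on the left by the full $\FF_{e',e}$ therefore mixes all graded pieces, and you cannot deduce vanishing of the individual term from vanishing of the resulting sum. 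If instead you expand $(\FF_{e',d}^{-1})_i=\sum_{a+b=i+g}(\FF_{e',e}^{-1})_a\circ(\FF_{e,d}^{-1})_b$ and apply the original (FV) to kill terms with $b+j<2g$, the surviving range $0\leq a\leq i+j-g$ is nonempty whenever $g\leq i+j<2g$, and those components are generically nonzero. (Also note: invertibility of $\FF_{e,e'}$ holds for \emph{all} $e,e'$ by Arinkin's equivalence and Riemann--Roch, so coprimality plays no role there; and if instead you meant the isomorphism \eqref{eq:chiisom} for $\FC_{e,e'}$, that is proven \emph{using} (FV2), so invoking it here would be circular.)

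The paper's proof is entirely different: it does not reduce to the original (FV) but rather \emph{reruns} the Adams-operation argument of \cite[Proof of Corollary~4.5(i)]{MSY} with new parameters. One considers the $K$-theory classes $\CP_{e',d}^{-1}\circ\psi^N(\CP_{e,d})$, supported in codimension $g$, for infinitely many integers $N$, and applies the Riemann--Roch functor $\tau(-)$. The coprimality hypothesis $(e,r)=1$ enters precisely here: choosing $a$ with $ae\equiv 1\pmod r$ and $N=aN'$ with $N'\equiv e'\pmod r$ forces the class $p_{12}^*\psi^N(\CP_{e,d})\otimes p_{23}^*\CP_{e',d}^{-1}$ to lie in the correct isotypic component, so that it descends from the $\mu_r$-gerbe to the scheme and $\tau(-)$ can be pushed forward along a proper \emph{representable} morphism. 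Once this descent is arranged, the remainder of the \cite{MSY} argument applies verbatim to give (FV1). For (FV2) one simply observes that, by the symmetry of Arinkin's Poincar\'e sheaf, the transpose of $(\FF_{e,d})_i\circ(\FF_{e,d'}^{-1})_j$ is $(\FF_{d',e}^{-1})_j\circ(\FF_{d,e})_i$, which is an instance of (FV1) with the roles of $d$ and $e$ interchanged.
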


\begin{proof}
The proof of (FV1) follows closely that of (FV) in \cite[Proof of Corollary 4.6(i)]{MSY}, where we used the Adams operations $\psi^N(-)$ of \cite[Section 4]{AGP} to scale the Poincar\'e sheaf $\CP_{e, d}$ in $K$-theory. By applying the tau-class $\tau(-)$ to the $K$-theory classes supported in codimension~$g$:
\[
\CP^{-1}_{e, d} \circ \psi^N(\CP_{e,d}) \in K_*(\overline{\mathcal{J}}_C^e \times_B \overline{\mathcal{J}}_C^e)
\]
for infinitely many $N$, we were able to obtain (FV) for each pair of $i, j$ satisfying $i + j < 2g$. The crucial point here is that the functor $\tau(-)$ of \cite{EG2} is only covariant under proper \emph{representable} morphisms of quotient stacks. By choosing
\[
N \equiv 1 \pmod{r}
\]
we made sure that the class
\[
p_{12}^*\psi^N(\CP_{e, d}) \otimes p_{23}^*\CP_{e, d}^{-1} \in K_*(\overline{\mathcal{J}}_C^e \times_B \overline{\mathcal{J}}_C^d \times_B \overline{\mathcal{J}}_C^e)_{(d, 0, -d)}
\]
descends to a class in $K_*(\overline{\mathcal{J}}_C^e \times_B \overline{J}_C^d \times_B \overline{\mathcal{J}}_C^e)$ in order to apply $\tau(-)$ to the proper representable morphism
\[
p_{13}: \overline{\mathcal{J}}_C^e \times_B \overline{J}_C^d \times_B \overline{\mathcal{J}}_C^e \to \overline{\mathcal{J}}_C^e \times_B \overline{\mathcal{J}}_C^e.
\]
Here $p_{ij}$ are the natural projections to the corresponding factors.

Now, under the assumptions of (FV1), we consider the $K$-theory classes supported in codimension $g$:
\[
\CP^{-1}_{e', d} \circ \psi^N(\CP_{e,d}) \in K_*(\overline{\mathcal{J}}_C^e \times_B \overline{\mathcal{J}}_C^{e'}).
\]
Since $(e, r) = 1$, we have $ae \equiv 1 \pmod{r}$ for some integer $a$. Then by taking
\[
N = aN', \quad N' \equiv e' \pmod{r},
\]
we get an infinite sequence of classes
\[
p_{12}^*\psi^N(\CP_{e, d}) \otimes p_{23}^*\CP_{e', d}^{-1} \in K_*(\overline{\mathcal{J}}_C^e \times_B \overline{\mathcal{J}}_C^d \times_B \overline{\mathcal{J}}_C^{e'})_{(ae'd, 0, -d)}
\]
which descend to $K_*(\overline{\mathcal{J}}_C^e \times_B \overline{J}_C^d \times_B \overline{\mathcal{J}}_C^{e'})$. The argument of \cite[Proof of Corollary 4.6(i)]{MSY} then applies \emph{verbatim} to yield (FV1).

For the proof of (FV2), we can either repeat the whole argument by switching the composition order of the Fourier transform and its inverse, or simply observe that the transpose~of
\[
(\FF_{e, d})_i \circ (\FF_{e, d'}^{-1})_j
\]
is precisely
\begin{equation} \label{eq:transpose}
(\FF^{-1}_{d', e})_j \circ (\FF_{d, e})_i.
\end{equation}
This is an immediate consequence of the symmetry in Arinkin's formula \cite[(1.1)]{A2} for the Poincar\'e sheaf; see also \cite[Footnote 12]{MSY}. Therefore, (FV2) follows from (FV1) and \eqref{eq:transpose} by replacing~$e$ with $d$, $e'$ with $d'$, $d$ with $e$, and by switching $i$ and $j$.
\end{proof}

We are ready to prove Theorems \ref{thm2.3}(b) and \ref{thm2.5}. We begin with the latter.

\begin{proof}[Proof of Theorem \ref{thm2.5}]
We first prove the identities \eqref{eq:chiisom}. We expand
\begin{align}
[\Delta_{\overline{J}^d_C/B}] & = \FF_{1, d} \circ \FF^{-1}_{1, d'} \circ \FF_{1, d'} \circ \FF^{-1}_{1, d} \nonumber\\
& = \sum_{i, j, l, m} (\FF_{1, d})_i \circ (\FF^{-1}_{1, d'})_j \circ (\FF_{1, d'})_l \circ (\FF^{-1}_{1, d})_m. \label{eq:ijlm}
\end{align}
For degree reasons the terms in \eqref{eq:ijlm} vanish unless $i + j+ l + m = 4g$. Applying the Fourier vanishing (FV2) with $d, d'$ coprime to $r$, we find that the nonvanishing terms in \eqref{eq:ijlm} must also satisfy $i + j \geq 2g$, $l + m \geq 2g$. Hence both inequalities are actual equalities and we have
\[
[\Delta_{\overline{J}^d_C/B}] = \left(\sum_{i = 0}^{2g}(\FF_{1, d})_i \circ (\FF^{-1}_{1, d'})_{2g - i}\right) \circ \left(\sum_{l = 0}^{2g}(\FF_{1, d'})_l \circ (\FF^{-1}_{1, d})_{2g - l}\right) = \FC_{d', d} \circ \FC_{d, d'},
\]
which proves the first identity in \eqref{eq:chiisom}. The second identity follows by symmetry.

It remains to verify that $\FC_{d, d'}, \FC_{d',d}$ preserve the motivic perverse filtrations. This amounts to showing the relations (\ref{key_assumption}) where $\Fp_k, \Fq_{k + 1}$ (resp.~$\Fp'_k, \Fq'_{k + 1}$) denote the projectors for $\overline{J}^d_C$ (resp.~$\overline{J}^{d'}_C$). By symmetry it suffices to prove the first identity:
\[
\Fq'_{k + 1} \circ\mathfrak{C}_{d,d'}\circ\Fp_k=0.
\]
The left-hand side can be expressed as
\begin{equation} \label{expand}
 \left(\sum_{i \geq k + 1}(\FF_{1, d'})_i \circ (\FF^{-1}_{1, d'})_{2g - i}\right) \circ \left(\sum_{j = 0}^{2g}(\FF_{1, d'})_j \circ (\FF^{-1}_{1, d})_{2g - j}\right) \circ \left(\sum_{m \leq k}(\FF_{1, d})_m \circ (\FF^{-1}_{1, d})_{2g - m}\right).
\end{equation}
This time applying (FV1) with $e = 1$ to (\ref{expand}), we see that the nonvanishing terms must satisfy
\[
2g - i + j \geq 2g, \quad 2g - j + m \geq 2g, \quad i \geq k + 1, \quad m \leq k,
\]
or, equivalently,
\[
k \geq m \geq j \geq i \geq k + 1.
\]
This is absurd, and hence \eqref{expand} vanishes.
\end{proof}

The proof of Theorem \ref{thm2.3}(b) is slightly more complicated but follows a similar pattern. For two integers $e_1, e_2$, recall from \cite[Proof of Corollary 4.6(iii)]{MSY} the Chow-theoretic convolution kernel (with a different notation to avoid confusion with the correspondence $\FC_{d, d'}$)
\[
\FK_{e_1, e_2, d} \in \Corr_B^{\geq -g}(\overline{J}_C^{e_1}, \overline{J}_C^{e_2}; \overline{J}_C^{e_1 + e_2}),
\]
which satisfies
\begin{equation} \label{eq:convker}
\FF_{e_1 + e_2, d} \circ \FK_{e_1, e_2, d} \circ \left(\FF^{-1}_{e_1, d} \times \FF^{-1}_{e_2, d}\right) = [\Delta^{\mathrm{sm}}_{\overline{J}^d_C/B}].
\end{equation}

\begin{proof}[Proof of Theorem \ref{thm2.3}(b)]
We need to show the vanishing
\begin{equation} \label{eq:multtwist}
\Fq_{k + l + 1} \circ [\Delta^{\mathrm{sm}}_{\overline{J}^d_C/B}] 
\circ (\Fp_k \times \Fp_l) = 0 \in \Corr_B^0(\overline{J}^d_C, \overline{J}^d_C; \overline{J}^d_C).
\end{equation}
By \eqref{eq:convker} we have
\begin{align}
\Fq_{k + l + 1} \circ [\Delta^{\mathrm{sm}}_{\overline{J}^d_C/B}] 
\circ (\Fp_k \times \Fp_l) & = \Fq_{k + l + 1} \circ \FF_{e_1 + e_2, d} \circ \FK_{e_1, e_2, d} \circ \left((\FF^{-1}_{e_1, d} \circ \Fp_k) \times (\FF^{-1}_{e_2, d} \circ \Fp_l)\right) \nonumber\\
& = \left(\sum_{i_3 \geq k + l + 1} (\FF_{1, d})_{i_3} \circ (\FF^{-1}_{1, d})_{2g - i_3}\right) \circ \left(\sum_{j_3}(\FF_{e_1 + e_2, d})_{j_3}\right) \circ \FK_{e_1, e_2, d} \label{eq:cupexpand}\\
& \quad\quad \circ \left(\left(\left(\sum_{j_1}(\FF^{-1}_{e_1, d})_{j_1}\right) \circ \left(\sum_{i_1 \leq k}(\FF_{1, d})_{i_1} \circ (\FF^{-1}_{1, d})_{2g - i_1}\right)\right)\right. \nonumber\\
& \quad\quad\quad \times \left.\left(\left(\sum_{j_2}(\FF^{-1}_{e_2, d})_{j_2}\right) \circ \left(\sum_{i_2 \leq l}(\FF_{1, d})_{i_2} \circ (\FF^{-1}_{1, d})_{2g - i_2}\right)\right)\right). \nonumber
\end{align}
So far the choices of $e_1, e_2$ have been arbitrary. But if we choose $e_1, e_2$ such that $e_1 + e_2$ is coprime to $r$, then by applying (FV1) with $e = 1$ or $e = e_1 + e_2$ we see that the nonvanishing terms in \eqref{eq:cupexpand} must satisfy
\[
j_1 + i_1 \geq 2g, \quad j_2 + i_2 \geq 2g, \quad 2g - i_3 + j_3 \geq 2g.
\]
This implies
\[
j_1 \geq 2g - k, \quad j_2 \geq 2g - l, \quad j_3 \geq k + l + 1.
\]
Therefore we have
\begin{multline} \label{eq:degcompare}
\Fq_{k + l + 1} \circ [\Delta^{\mathrm{sm}}_{\overline{J}^d_C/B}] 
\circ (\Fp_k \times \Fp_l) = \Fq_{k + l + 1} \circ \left(\sum_{j_3 \geq k + l + 1}(\FF_{e_1 + e_2, d})_{j_3}\right) \circ \FK_{e_1, e_2, d} \\
\circ \left(\left(\left(\sum_{j_1 \geq 2g - k}(\FF^{-1}_{e_1, d})_{j_1}\right) \circ \Fp_k\right) \times \left(\left(\sum_{j_2 \geq 2g - l}(\FF^{-1}_{e_2, d})_{j_2}\right) \circ \Fp_l\right)\right)
\end{multline}
Comparing degrees, we find that the right-hand side of \eqref{eq:degcompare} lies in
\[
\Corr_{B}^{\geq -g + (k + l + 1 - g) + (g - k) + (g - l)}(\overline{J}^d_C, \overline{J}^d_C; \overline{J}^d_C) = \Corr_B^{\geq 1}(\overline{J}^d_C, \overline{J}^d_C; \overline{J}^d_C),
\]
while the left-hand side lies in $\Corr^0_B(\overline{J}^d_C, \overline{J}^d_C; \overline{J}^d_C)$. This shows the desired vanishing \eqref{eq:multtwist}.
\end{proof}

\subsection{Parabolic Higgs bundles: general residue}\label{Sec2_last}

Theorems \ref{thm2.2}, \ref{thm2.3}, and \ref{thm2.5} in Section~\ref{sec2.2} are analogues of our main theorems in the general setting of compactified Jacobians. For our purpose, we specialize to the case where $C \to B$ arises as a family of spectral curves. The main results of this section are Theorems \ref{thm2.7}, \ref{thm2.8}, and \ref{thm2.9} which treat the case of the Hitchin system associated with parabolic Higgs bundles with general residue.

\subsubsection{Normalized Chern characters}

For the curve $\Sigma$, we consider the nonsingular surface
\[
S:= \mathrm{Tot}_\Sigma(\omega_\Sigma(p)).
\]
Now we work with a family of curves $C \to B$ as in Section \ref{sec2.2}, imposing the extra condition that they lie in the linear system $|n\Sigma|$ of the surface $S$; in particular, every curve $C_b \subset S$ is a degree $n$ finite cover over the zero-section $\Sigma$. We have a natural evaluation map
\[
\mathrm{ev}: C \to S \times B.
\]
The composition of $\mathrm{ev}$ with the projection $S\times B \to \Sigma \times B$ gives the spectral cover together with a (degree $n$) multisection:
\[
\pi: C \to \Sigma \times B, \quad  D:= \pi^{-1}(p \times B).
\]
For each curve $C_b$ with a degree $n$ spectral cover $\pi_b: C_b \to \Sigma$, the multisection over this fiber is ${D}_b = \pi_b^{-1}(p)$.

Next, we introduce the normalized Chern character of Section \ref{Sect0.2}. Let $M^{\mathrm{mero}}$ be the moduli space of stable meromorphic Higgs bundles on the curve $\Sigma$, which parameterizes slope stable (meromorphic) Higgs bundles:
\[
(\CE, \theta), \quad \theta: \CE \to \CE\otimes \omega_\Sigma(p), \quad \mathrm{rk}(\CE) = n, \quad \mathrm{deg}(\CE) = d.
\]
Here we call them meromorphic Higgs bundles since they can be viewed as Higgs bundles which are allowed to have a simple pole at $p$. The variety $M^{\mathrm{mero}}$ is nonsingular, and the coprime condition $(n,d)=1$ ensures the existence of a universal bundle $\CU$ on $\Sigma \times M^{\mathrm{mero}}$. The reason we work first with this moduli space is that all the moduli spaces in Section~\ref{Sec1.2}, including the original moduli space $M_{n,d}$, admit natural morphisms to $M^{\mathrm{mero}}$, and the pullback of the universal bundle gives a universal bundle on each moduli space. So the normalized Chern~character
\[
\widetilde{\mathrm{ch}}(\CU) \in \mathrm{CH}^* ( \Sigma \times M^{\mathrm{mero}} ),
\]
yields the normalized Chern character for each moduli space of Section \ref{Sec1.2}. This strategy was taken in \cite[Section 5.4]{MSY}.

In cohomology, the normalization is given by a class
\begin{equation*}
\delta := \pi^*_\Sigma\delta_\Sigma + \pi^*_M\delta_M \in   H^2(\Sigma\times M^{\mathrm{mero}}, \BQ),
\end{equation*}
with $\pi_\bullet$ the natural projections, satisfying that 
\[
c_1(\CU) + \delta \in H^1(\Sigma, \BQ) \otimes H^1(M^{\mathrm{mero}}, \BQ) \subset H^2(\Sigma\times M^{\mathrm{mero}}, \BQ);
\]
this determines $\delta$ completely. Then the normalized Chern character is defined to be 
\[
\widetilde{\mathrm{ch}}(\CU): = \mathrm{ch}(\CU)\cup \mathrm{exp}\left(\delta \right) \in H^*(\Sigma \times M^{\mathrm{mero}}, \BQ),
\]
which is canonical and independent of the choice of $\CU$. We refer to \cite[Section 0.3]{dCMS} for more details.

For our purpose, we need to lift $\delta$ (associated with a universal bundle $\CU$) canonically to a divisor
\begin{equation}\label{sigma}
\delta: = \pi_\Sigma^* D_\Sigma + \pi_M^* D_M \in \mathrm{CH}^1(\Sigma \times M^{\mathrm{mero}}).
\end{equation}
We first determine $D_M$ using the marked point $p\in \Sigma$:
\[
D_M: = c_1(\CU)|_{p\times M^{\mathrm{mero}}} \in \mathrm{CH}^1(M^{\mathrm{mero}}).
\]
Now we determine $D_\Sigma$, which is eventually written as a linear combination of the point class of~$p$ and the canonical divisor $K_{\Sigma}$ on the curve $\Sigma$. Pick a nonsingular spectral curve \mbox{$C_b \subset \mathrm{Tot}_\Sigma(\omega_{\Sigma}(p))$}; a Higgs bundle in $M^{\mathrm{mero}}$ associated with the spectral curve $C_b$ can be represented by a line bundle $\CL_{C_b}$ on $C_b$ of degree $l(n,d)$, where $l(n,d)$ is given by a Riemann--Roch calculation 
\[
l(n,d)= \frac{n(n-1)(2g-1)}{2} +d.
\]
In particular, we have
\[
c_1(\CL_{C_b}) - \frac{l(n,d)}{n} \cdot {D}_b =c_1(\CL_{C_b}) - \frac{l(n,d)}{n} \cdot \pi_b^* [p]= 0 \in H^2(C_b, \BQ).
\]
Finally, there is a unique constant $\lambda_0 \in \frac{1}{2n}\BZ$ such that
\[
c_1(\CU) -  \frac{l(n,d)}{n} \cdot \pi_b^* [p] + \lambda_0 \cdot K_\Sigma = 0 \in H^2(\Sigma, \BQ).
\]
We set
\[
D_\Sigma : = - \frac{l(n,d)}{n} \cdot  [p] + \lambda_0 \cdot K_\Sigma,
\]
in (\ref{sigma}), which defines the normalized Chern character
\begin{equation}\label{NCC}
\widetilde{\mathrm{ch}}(\CU): = \mathrm{ch}(\CU) \cup \mathrm{exp}(\delta) \in \mathrm{CH}^*(\Sigma \times M^{\mathrm{mero}}).
\end{equation}
We denote by $\widetilde{\mathrm{ch}}_k(\CU)$ its degree $k$ part:
\[
\widetilde{\mathrm{ch}}(\CU) = \sum_{k} \widetilde{\mathrm{ch}}_k(\CU), \quad\widetilde{\mathrm{ch}}_k(\CU) \in \mathrm{CH}^k(\Sigma \times M^{\mathrm{mero}}).
\]
The normalized Chern character (\ref{NCC}) does not depend on the choice of the universal bundle~$\CU$, and its image under the cycle class map recovers the normalized Chern character in cohomology. Indeed, two universal bundles $\CU, \CU'$ differ by a line bundle pulled back from~$M^{\mathrm{mero}}$. Therefore we have $\mathrm{ch}(\CU') = \mathrm{ch}(\CU)\cup \mathrm{exp}(\ell_M)$ with $\ell_M \in \mathrm{CH}^1(M^{\mathrm{mero}})$. By definition, $\CU,\CU'$ share the same normalized Chern character.

\subsubsection{Normalized Chern characters via Fourier transforms}

Next, we give an explicit description of the normalized Chern character for the moduli space~$M_\eta^{\mathrm{par}}$ for a general $\eta \in T$ as in Section \ref{Sec1.2}(a); it admits a compactified Jacobian fibration associated with the family of spectral curves $C_\eta \to W_\eta$. In other words, we have 
\[
M_\eta^{\mathrm{par}} \simeq \overline{J}^e_{C_\eta} \to W_\eta, \quad e=l(n,d).
\]
We consider the diagram
\begin{equation}\label{eqn20}
    \begin{tikzcd}
C_\eta \arrow[r, "\mathrm{AJ}"] \arrow[d, "\pi "]
&\overline{J}^1_{C_\eta} \\
\Sigma \times W_\eta
\end{tikzcd}
\end{equation}
where the horizontal map is the Abel--Jacobi map
\[
\mathrm{AJ}: C_\eta \to \overline{J}^1_{C_\eta},\quad \quad  ( x \in C_b ) \mapsto \mathfrak{m}^\vee_{x/C_b}.
\]
The diagram (\ref{eqn20}) yields a relative correspondence over $W_\eta$:
\[
[C_\eta] \in \mathrm{CH}_{\dim C_\eta}( \Sigma \times \overline{J}^1_{C_\eta} ).
\]

\begin{prop}[\cite{MSY}]\label{Prop2.6}
Let $\CU$ be a universal bundle over $\Sigma \times M^{\mathrm{par}}_\eta \simeq \Sigma \times \overline{J}^e_{C_\eta}$. Then we~have
\[
\widetilde{\mathrm{ch}}(\CU) =  \mathfrak{F}_{1,e} \circ [C_\eta] \in \mathrm{CH}^*(\Sigma \times M^{\mathrm{par}}_\eta).
\]
\end{prop}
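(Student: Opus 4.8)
The plan is to derive the formula from the Beauville--Narasimhan--Ramanan spectral correspondence together with Arinkin's description of the Poincar\'e sheaf along the Abel--Jacobi map, translated from coherent sheaves to Chow cycles by the singular Riemann--Roch theorem of Baum--Fulton--MacPherson. Both sides of the identity are canonical --- $\widetilde{\mathrm{ch}}(\CU)$ is independent of the universal bundle by construction, and $\mathfrak{F}_{1,e}\circ[C]$ involves no universal bundle at all (throughout, $e=l(n,d)$, so that $M^{\mathrm{par}}_\eta=\overline{J}^e_C$ and the relevant Fourier transform is $\mathfrak{F}_{1,e}$) --- so I am free to pick a convenient model for $\CU$. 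Write $\pi\colon C\to\Sigma\times\widetilde{W}_\eta$ for the spectral cover with multisection $D=\pi^{-1}(p\times\widetilde{W}_\eta)$, let $\mathrm{AJ}\colon C\to\overline{J}^1_C$ be the Abel--Jacobi map, and set $\gamma:=(\pi,\mathrm{AJ})\colon C\to\Sigma\times\overline{J}^1_C$, so that the correspondence $[C]$ of (\ref{eqn20}) is $\gamma_*$ of the fundamental class of $C$. I will use two geometric inputs: (i) pushing the rigidified universal sheaf $\CF^e$ on $C\times_{\widetilde{W}_\eta}\overline{\CJ}^e_C$ forward along $\pi\times\mathrm{id}$ produces a family of slope-stable Higgs bundles on $\Sigma$, which, since $(n,d)=1$, is a universal bundle up to tensoring with a line bundle pulled back from $\overline{J}^e_C$; and (ii) Arinkin's defining property of the Poincar\'e sheaf \cite{A2}, in the rigidified form of \cite{MSY}: the pullback of $\CP_{1,e}$ along the Abel--Jacobi map (lifted to the gerbe compatibly with the rigidification) agrees with $\CF^e$ up to a line bundle pulled back from $C$ and from $\overline{\CJ}^e_C$.

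Granting these, I would first prove a sheaf-level statement: combining (i) and (ii) through the projection formula and flat base change along the finite map $\pi\times\mathrm{id}$ identifies the complex
\[
\left(\mathrm{id}_\Sigma\times\mathrm{FM}_{\CP_{1,e}}\right)\bigl(\gamma_*\CO_C\bigr)\ \in\ D^b\mathrm{Coh}\bigl(\Sigma\times\overline{\CJ}^e_C\bigr)
\]
with $\CU$, up to a cohomological shift (contributing only a sign) and up to tensoring with a line bundle of the shape $\pi_\Sigma^*A_\Sigma\otimes\pi_M^*A_M$; the shift and twists record the discrepancy between $\gamma_*\CO_C$ and the ``correct'' line bundle on $C$, together with the normalizations built into $\CF^e$ and $\CU$.

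Next I would pass to rational Chow groups, where gerbes and their coarse spaces are identified. By definition $\mathfrak{F}_{1,e}=\td(-T_{\overline{\CJ}^1_C\times_B\overline{\CJ}^e_C})\cap\tau(\CP_{1,e})$ is the realization of $\mathrm{FM}_{\CP_{1,e}}$ compatible with the $\tau$-transformation; since $\tau$ (in the equivariant form for the $\mu_r$-gerbes, applicable because the relevant projections are proper and \emph{representable}) commutes with proper pushforward and satisfies $\tau(\mathcal{G})=\mathrm{ch}(\mathcal{G})\cdot\td(T_X)\cap[X]$ for a perfect complex $\mathcal{G}$ on smooth $X$, composing with $[C]$ computes the $\tau$-class of $(\mathrm{id}_\Sigma\times\mathrm{FM}_{\CP_{1,e}})(\gamma_*\CO_C)$, corrected by the Todd classes of $C/\widetilde{W}_\eta$ and of $\Sigma$ coming out of the two Grothendieck--Riemann--Roch steps, for $\gamma$ and for $\pi\times\mathrm{id}$. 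Feeding in the sheaf-level identity yields $\mathfrak{F}_{1,e}\circ[C]=\mathrm{ch}(\CU)\cdot\exp(\delta')$ for an explicit class $\delta'=\pi_\Sigma^*D'_\Sigma+\pi_M^*D'_M\in\mathrm{CH}^1(\Sigma\times\overline{J}^e_C)$, with $D'_\Sigma$ a combination of $[p]$, $K_\Sigma$ and the branch divisor of $\pi$, and $D'_M$ a divisor on $\overline{J}^e_C$. To conclude I would check $\delta'=\delta$. For the $\overline{J}^e_C$-component the line-bundle ambiguities above are pinned down by the rigidification $\det\bigl(p_{J*}(\CF^e|_{\overline{\CJ}^e_C\times_C D})\bigr)\simeq\CO$, so restricting the identity to $p\times\overline{J}^e_C$ and using $D=\pi^{-1}(p\times\widetilde{W}_\eta)$ forces $D'_M=c_1(\CU)|_{p\times M^{\mathrm{par}}_\eta}=D_M$. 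For the $\Sigma$-component, $D'_\Sigma$ is a linear combination of $[p]$ and $K_\Sigma$, hence determined by its degree and one further linear condition, both of which I would read off by restricting to a single nonsingular spectral curve $C_b$ and using that the line bundle on $C_b$ underlying a point of $\overline{J}^e_C$ has degree $e=l(n,d)$; this reproduces $D_\Sigma=-\frac{l(n,d)}{n}[p]+\lambda_0K_\Sigma$.

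The main obstacle is precisely this last normalization bookkeeping. What makes it delicate is twofold: the line-bundle ambiguities in (i) and (ii) must be propagated through the rigidification of the $\mu_r$-gerbe $\overline{\CJ}^\bullet_C\to\overline{J}^\bullet_C$, staying within classes that descend to the coarse spaces so that the functoriality of $\tau$ applies (it requires proper \emph{representable} morphisms); and $\CP_{1,e}$ is a genuine complex supported in codimension $g$, not a vector bundle, so the singular Riemann--Roch of Baum--Fulton--MacPherson cannot be avoided, and one must carefully balance the Todd class of the spectral cover $\pi$ --- which is where $K_\Sigma$ and the branch divisor enter --- against that of the Abel--Jacobi map. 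These are exactly the technical points handled in \cite{MSY}, so in practice I would quote the computation there rather than redo it.
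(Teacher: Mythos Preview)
Your proposal is correct and follows essentially the same route as the paper: both invoke Arinkin's description of the Poincar\'e sheaf along the Abel--Jacobi map (\cite[Proposition~4.7]{MSY}), then apply Grothendieck--Riemann--Roch twice --- once to pass from the Poincar\'e sheaf to the universal torsion-free sheaf on $C$, and once along the spectral cover $\pi:C\to\Sigma\times\widetilde{W}_\eta$ to reach the universal bundle on $\Sigma$ --- with the normalization terms arising exactly as you describe (the $-\frac{l(n,d)}{n}[p]$ from the line-bundle twist $\CN$ and the $\lambda_0 K_\Sigma$ from the Todd contribution of $\pi$). Your outline is somewhat more explicit about the rigidification and descent bookkeeping than the paper's proof, which simply cites the computations in \cite[Section~5]{MSY}, but the underlying argument is the same.
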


\begin{proof}
Applying the Grothendieck--Riemann--Roch (GRR) formula to \cite[Proposition 4.7]{MSY} as in the proof of \cite[Proposition 5.1(a)]{MSY}, we express the Chern character of the universal $1$-dimensional torsion free sheaf over the universal spectral curve $C_\eta$ in terms of $\mathfrak{F}_{1,e}$. Then the result follows from applying further the GRR formula along $\pi: C_\eta \to \Sigma\times W_\eta$, expressing the Chern character of a universal bundle over $\Sigma$ in terms of the Chern character of a universal torsion free sheaf over $C_\eta$ \cite[Section 5]{MSY}. 

Note that the term $ - \frac{l(n,d)}{n} \cdot  [p] $ in the normalization comes from the line bundle $\CN$ of \cite[Proposition 4.7]{MSY}, and the second term $\lambda_0\cdot K_\Sigma$ arise from the Todd contribution of the GRR calculation along $\pi: C_\eta\to \Sigma\times W_\eta$.
\end{proof}

\subsubsection{Main results for Higgs bundles with general residue}

We deduce the following three theorems for the moduli space $M^{\mathrm{par}}_\eta$, the first of which is an immediate consequence of Theorem~\ref{thm2.2}.

\begin{thm}\label{thm2.7}
The morphism $h_\eta: M^{\mathrm{par}}_\eta \to W_\eta$ admits a motivic perverse filtration in the sense of Definition \ref{def_motivic_P}, where the projectors are given in Theorem \ref{thm2.2}.
\end{thm}

The next theorem determines the perversity of the normalized Chern character.

\begin{thm}\label{thm2.8}
Taking cup-product with the normalized Chern character satisfies
\[
\widetilde{\mathrm{ch}}_k(\CU): P_ih(\Sigma \times M^{\mathrm{par}}_\eta) \to P_{i+k}h(\Sigma \times M^{\mathrm{par}}_\eta)(k) \in \mathrm{CHM}(\Sigma \times {W}_\eta).
\]
Here the motivic perverse filtration for $\Sigma \times M^{\mathrm{par}}_\eta \to \Sigma \times W_\eta$ is given by the pullback of the motivic perverse filtration for $h_\eta: M^{\mathrm{par}}_\eta \to W_\eta$.
\end{thm}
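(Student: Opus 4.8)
The plan is to reduce the statement to the two compatibility results already established for the compactified Jacobian fibration $\overline{J}^e_C \to \widetilde{W}_\eta$, namely Theorem \ref{thm2.3}(a) (the Fourier component $(\mathfrak{F}_{1,d})_k$ lands in $P_k$) and Corollary \ref{cor2.4} (cupping with a class in $P_k h(\overline{J}^d_C)$ shifts the filtration by $k$). First I would reduce from $\Sigma \times W_\eta$ to $W_\eta$: by the Künneth-type structure of $h(\Sigma)$, the motive $h(\Sigma \times \overline{J}^e_C)$ decomposes over $\Sigma \times W_\eta$ according to the three summands $h^0(\Sigma), h^1(\Sigma), h^2(\Sigma)$, and the perverse filtration on the product is the pullback of the one on $\overline{J}^e_C \to W_\eta$ in the $\overline{J}^e_C$-direction with no shift coming from the $\Sigma$-direction. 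So it suffices to understand how each Künneth component $\widetilde{\mathrm{ch}}_k(\CU)_{(j)} \in \mathrm{CH}^{k-j}(\overline{J}^e_C)$ (the part lying in $H^j(\Sigma) \otimes \mathrm{CH}^{k-j}$, for $j = 0,1,2$) interacts with the filtration on $h(\overline{J}^e_C)$; the total shift by $k$ on $P_\bullet$ will then follow by bookkeeping the $\Sigma$-degree against the $\overline{J}^e_C$-degree.

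The heart of the argument is then the identity of Proposition \ref{Prop2.6}, $\widetilde{\mathrm{ch}}(\CU) = \mathfrak{F}_{1,d} \circ [C] \in \mathrm{CH}^*(\Sigma \times \overline{J}^d_C)$. Taking the degree-$k$ part and decomposing $\mathfrak{F}_{1,d} = \sum_i (\mathfrak{F}_{1,d})_i$, the class $\widetilde{\mathrm{ch}}_k(\CU)$ is a sum of contributions $(\mathfrak{F}_{1,d})_i \circ [C]_{?}$; by Theorem \ref{thm2.3}(a) the operator $(\mathfrak{F}_{1,d})_i$ has image in $P_i h(\overline{J}^d_C)$. Combining this with the degree count relating $i$ to the Chow-degree of the resulting class and to the $\Sigma$-degree, one gets that the component of $\widetilde{\mathrm{ch}}_k(\CU)$ of bidegree $(j, k-j)$ (in $(\Sigma, \overline{J}^e_C)$) lies in $\mathrm{CH}^{k-j}\big(P_{?} h(\overline{J}^e_C)\big)$ with the perversity index $?$ equal to $k$ once the $\Sigma$-shift is accounted for. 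Then Corollary \ref{cor2.4} (applied to the fibration $\overline{J}^e_C \to W_\eta$, using $M^{\mathrm{par}}_\eta = \overline{J}^e_C$) upgrades this to the statement that cupping with $\widetilde{\mathrm{ch}}_k(\CU)$ sends $P_i h$ to $P_{i+k} h(k)$. Finally I would record that, just as in Theorem \ref{thm2.7}, pushing forward along the finite $\mathfrak{S}_n$-quotient $\widetilde{W}_\eta \to W_\eta$ is $t$-exact, so all of this descends correctly from $\widetilde{W}_\eta$ to $W_\eta$.

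The main obstacle I anticipate is the careful matching of degrees across Proposition \ref{Prop2.6}: one must check that the Abel--Jacobi class $[C] \in \mathrm{CH}_{\dim C}(\Sigma \times \overline{J}^1_C)$, when pushed through the mixed-degree correspondence $\mathfrak{F}_{1,d}$, produces exactly the component of $\widetilde{\mathrm{ch}}_k(\CU)$ in each $\Sigma$-Künneth slot, and that the Fourier-index $i$ in Theorem \ref{thm2.3}(a) translates into precisely the perversity index $k$ (not $k-j$ or $k+j$) after incorporating the fixed $\Sigma$-degree contribution. This is essentially the same bookkeeping carried out sheaf-theoretically in \cite{MS_PW}, but here it has to be done at the level of Chow-theoretic correspondence degrees; I expect it to be somewhat delicate but purely formal, with no new geometric input beyond Proposition \ref{Prop2.6}, Theorem \ref{thm2.3}(a), and Corollary \ref{cor2.4}.
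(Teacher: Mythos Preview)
Your proposal identifies the right ingredients --- Proposition \ref{Prop2.6}, Theorem \ref{thm2.3}(a), Corollary \ref{cor2.4}, and the descent along the finite $\mathfrak{S}_n$-quotient $\widetilde{W}_\eta \to W_\eta$ --- but the K\"unneth detour through $h(\Sigma)$ and the bidegree bookkeeping are unnecessary, and in fact this is precisely the ``main obstacle'' you anticipate. The paper's proof avoids it entirely by working directly over the base $\Sigma \times \widetilde{W}_\eta$: the fibration $\Sigma \times \overline{J}^e_C \to \Sigma \times \widetilde{W}_\eta$ is itself (the base change of) a compactified Jacobian fibration, so the Fourier formalism and Corollary \ref{cor2.4} apply to it verbatim, with the motivic perverse filtration being the pullback of the one over $\widetilde{W}_\eta$.

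The degree-matching worry then dissolves. Since $[C] \in \mathrm{CH}_{\dim C}(\Sigma \times \overline{J}^1_C)$ has a single fixed dimension, the identity $\widetilde{\mathrm{ch}}(\CU) = \mathfrak{F}_{1,d} \circ [C]$ of Proposition \ref{Prop2.6} gives, upon taking degree-$k$ parts, exactly
\[
\widetilde{\mathrm{ch}}_k(\CU) = (\mathfrak{F}_{1,d})_k \circ [C],
\]
so the Chow degree is governed solely by the Fourier index, with no $\Sigma$-shift to track. Theorem \ref{thm2.3}(a) then places this class in $P_k\mathrm{CH}^k(\Sigma \times \overline{J}^e_C)$ directly, and a single application of Corollary \ref{cor2.4} to the product fibration finishes. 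Your plan would eventually succeed once the bookkeeping is carried out, but the paper's route is shorter and sidesteps the obstacle you flagged.
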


\begin{proof}
Recall the perverse filtration on $\mathrm{CH}_*(M^{\mathrm{par}}_\eta)$ induced by the motivic perverse filtration associated with the compactified Jacobian fibration $M^{\mathrm{par}}_\eta \to {W}_\eta$. We first note that
\[
\widetilde{\mathrm{ch}}_k(\CU)\in P_k\mathrm{CH}^{k}(\Sigma \times M^{\mathrm{par}}_\eta).
\]
This is a direct consequence of Proposition \ref{Prop2.6} combined with Theorem \ref{thm2.3}(a):
\[
\widetilde{\mathrm{ch}}_k(\CU) = (\mathfrak{F}_{1,e})_k\circ [C_\eta].
\]
Then applying Corollary \ref{cor2.4} to 
\[
\Sigma \times M^{\mathrm{par}}_\eta \to \Sigma \times {W}_\eta
\]
completes the proof of the theorem.
\end{proof}

The third theorem concerns the $\chi$-independence phenomenon and is immediately deduced from Theorem \ref{thm2.5}. 

\begin{thm}\label{thm2.9}
Assume $d,d'$ are two integers coprime to $n$ with $e = l(n, d), e' = l(n, d')$. Let $M^{\mathrm{par}}_\eta, {M^{\mathrm{par}}_\eta}'$ be the parabolic moduli spaces associated with the degrees $d,d'$ respectively. Then~$\mathfrak{C}_{e,e'}$ from Theorem \ref{thm2.5} induces an isomorphism 
\[
\mathfrak{C}_{e,e'}: h(M^{\mathrm{par}}_\eta) \xrightarrow{~~\simeq~~} h({M^{\mathrm{par}}_\eta}')
\]
preserving the motivic perverse filtrations
\[
\mathfrak{C}_{e,e'}: P_kh(M^{\mathrm{par}}_\eta) \xrightarrow{~~\simeq~~} P_kh({M^{\mathrm{par}}_\eta}').
\]
Moreover, the inverse of $\mathfrak{C}_{e,e'}$ is given by $\mathfrak{C}_{e',e}$.
\end{thm}

\subsection{Parabolic Higgs bundles: nilpotent residue} \label{sec2.5}

In the previous section, we have established the desired results for 
\[
h_\eta: M^{\mathrm{par}}_\eta \to W_\eta
\]
associated with parabolic Higgs bundles with general residue. The proofs in fact give the results for the family 
\[
h^\circ: M^{\mathrm{par}}_{T^\circ} \to W_{T^\circ}
\]
over $T^\circ$ introduced in Section \ref{Sec1.2}(a), which puts together the fibrations $h_\eta$ for $\eta \in T^\circ$. Since $\chi_p: M^{\mathrm{par}}_T \to T$ is a smooth morphism, by applying the specialization map for Chow groups via \eqref{sp_diagram}, all the results concerning relations of algebraic cycles hold for 
\[
h_0: M^{\mathrm{par}}_0 \to W_0.
\]

\begin{thm}\label{thm2.10}
The morphism $h_0: M^{\mathrm{par}}_0 \to W_0$ admits a motivic perverse filtration in the sense of Definition \ref{def_motivic_P}, where the projectors are induced by specializing the projectors of~$h_\eta: M^{\mathrm{par}}_\eta \to W_\eta$ with $\eta \in T^\circ$ given in Theorem \ref{thm2.7}.
\end{thm}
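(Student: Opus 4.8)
The plan is to deduce the statement from Theorem \ref{thm2.7} by a specialization argument for Chow groups along the line $T \subset \CA$. First I would set up the relative picture over $T^\circ$: the family $h^\circ: M^{\mathrm{par},\circ} \to W^\circ$ over $T^\circ$ assembles the fibrations $h_\eta$, and by the construction in Section \ref{Sec1.2}(a) the total space $M^{\mathrm{par},\circ}$ is a compactified Jacobian fibration $\overline{J}^e_{C^\circ} \to \widetilde{W}^\circ$, with $\widetilde{W}^\circ$ a degree-$n!$ cover of $W^\circ$. Theorem \ref{thm2.7} (applied relatively over $T^\circ$, which the proof there allows verbatim since the compactified Jacobian construction of Section \ref{sec2.2} works over any irreducible base) produces semi-orthogonal projectors
\[
\mathfrak{p}_k \in \mathrm{CH}_{\dim_{W^\circ} \overline{J}^e_{C^\circ} + \dim W^\circ}\bigl(\overline{J}^e_{C^\circ} \times_{W^\circ} \overline{J}^e_{C^\circ}\bigr), \quad 0 \le k \le 2g,
\]
satisfying $\mathfrak{p}_{k+1} \circ \mathfrak{p}_k = \mathfrak{p}_k$ and inducing the perverse truncation on $R(h^\circ)_* \BQ$.

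Next I would specialize. The family $\chi_p: M^{\mathrm{par}}_T \to T$ is smooth, so $W_0$ and $M^{\mathrm{par}}_0$ are the flat limits at $0 \in T$ of $W^\circ$ and $M^{\mathrm{par},\circ}$; the fibered products $\overline{J}^e \times_{W} \overline{J}^e$ likewise form a flat family over $T$ (flatness over $W_0$ of the compactified Jacobian is part of the hypotheses in Section \ref{sec2.2}). Choosing a smooth curve $T' \to T$ through a point of $T^\circ$ and through $0$, the specialization homomorphism of Fulton (\cite{Ngo} uses the same tool, as does \cite[Section 5]{MSY}) gives maps
\[
\mathrm{sp}: \mathrm{CH}_*\bigl(\overline{J}^e_{C^\circ} \times_{W^\circ} \overline{J}^e_{C^\circ}\bigr) \to \mathrm{CH}_*\bigl(M^{\mathrm{par}}_0 \times_{W_0} M^{\mathrm{par}}_0\bigr)
\]
compatible with proper pushforward, flat pullback, refined Gysin maps, and hence with composition of relative correspondences and with the formation of the relative diagonal. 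Setting $\mathfrak{p}_k^0 := \mathrm{sp}(\mathfrak{p}_k)$, the relations $\mathfrak{p}_{k+1}^0 \circ \mathfrak{p}_k^0 = \mathfrak{p}_k^0$, $\mathfrak{p}_{2R_f}^0 = [\Delta_{M^{\mathrm{par}}_0/W_0}]$, and idempotence are all preserved. The defect of semismallness is constant in the family (the generic and special Hitchin fibers have the same dimension), so the index range $0 \le k \le 2R_f = 2g$ is unchanged.

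The remaining and genuinely nontrivial point is the \emph{realization} axiom: that $\mathfrak{p}_k^0$ induces the perverse truncation ${}^{\mathfrak{p}}\tau_{\le k + (\dim M^{\mathrm{par}}_0 - R_f)} R(h_0)_* \BQ$. This does not follow formally from specialization of cycles, because the Corti--Hanamura functor is not obviously continuous in families; I would instead argue that the cohomology class of $\mathfrak{p}_k^0$ in $H^*(M^{\mathrm{par}}_0 \times_{W_0} M^{\mathrm{par}}_0)$ is the specialization of the class of $\mathfrak{p}_k$, then invoke that over $T^\circ$ this class acts as the perverse truncation projector, and use constructibility/base change for nearby cycles — exactly the mechanism developed in Section \ref{Section4} of the paper — to identify the limit action with the perverse truncation on the special fiber. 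This is where I expect the real work to be, and I anticipate the paper defers the verification of this realization statement to Section \ref{Section4} (the sentence preceding the theorem, ``all the results concerning relations of algebraic cycles hold,'' signals that the cycle-theoretic relations are immediate while the homological realization is handled separately). Granting that identification, the three axioms (a), (b), (c) of a motivic perverse filtration hold for $\{\mathfrak{p}_k^0\}$, proving the theorem.
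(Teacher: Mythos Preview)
Your proposal is correct and matches the paper's approach: the algebraic relations (termination and semi-orthogonality) follow formally from the compatibility of the Chow specialization map with composition of correspondences and the relative diagonal, while the realization axiom is deferred to Section~\ref{Sec4.1}, where it is established via the perverse $t$-exactness of the nearby cycle functor (Proposition~\ref{prop4.1} and Corollary~\ref{cor4.2}). You have correctly anticipated both the division of labor and the mechanism for the homological realization step.
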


As we discussed at the beginning of this Section \ref{sec2.5}, the projectors we obtain from specialization automatically satisfy the conditions (a) and (c) in Definition \ref{def_motivic_P}. It remains to prove~(b), \emph{i.e.}, the homological realization recovers the perverse truncation functor. This will be proven in Section \ref{Sec4.1}.

\begin{thm}\label{thm2.11}
Let $\CU$ be a universal bundle over $\Sigma\times M^{\mathrm{par}}_0$. Then we have
\[
\widetilde{\mathrm{ch}}_k(\CU): P_ih(\Sigma \times M^{\mathrm{par}}_0) \to P_{i+k}h(\Sigma\times M^{\mathrm{par}}_0)(k) \in \mathrm{CHM}(\Sigma \times W_0).
\]
\end{thm}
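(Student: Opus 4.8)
The plan is to deduce Theorem \ref{thm2.11} from Theorem \ref{thm2.8} by the specialization argument already used to pass from $h_\eta$ to $h_0$. More precisely, recall from Section \ref{Sec1.2}(a) that the families $h_\eta : M^{\mathrm{par}}_\eta \to W_\eta$ for $\eta \in T^\circ$ fit into a single family $h^\circ : M^{\mathrm{par},\circ} \to W^\circ$ over $T^\circ$, and that all the moduli spaces in question admit compatible morphisms to the meromorphic Higgs moduli space $M^{\mathrm{mero}}$, so the normalized Chern character $\widetilde{\mathrm{ch}}_k(\CU) \in \mathrm{CH}^k(\Sigma \times M^{\mathrm{mero}})$ of \eqref{NCC} restricts to the corresponding class on each $\Sigma \times M^{\mathrm{par}}_\eta$ and on $\Sigma \times M^{\mathrm{par}}_0$, all of these being pullbacks of one universal class. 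First I would note that Theorem \ref{thm2.8}, together with the proof scheme of Theorem \ref{thm2.7} (pushing forward along the finite $\mathfrak{S}_n$-quotient), actually holds in the stronger form of a statement over the \emph{whole} base $\Sigma\times W^\circ$: the cup-product action
\[
\widetilde{\mathrm{ch}}_k(\CU): P_i h(\Sigma \times M^{\mathrm{par},\circ}) \to P_{i+k} h(\Sigma \times M^{\mathrm{par},\circ})(k) \in \mathrm{CHM}(\Sigma \times W^\circ)
\]
holds, since the Abel--Jacobi description $\widetilde{\mathrm{ch}}_k(\CU) = (\mathfrak{F}_{1,d})_k \circ [C]$ of Proposition \ref{Prop2.6} and Theorem \ref{thm2.3}(a), (b), Corollary \ref{cor2.4} are all proven in the general compactified-Jacobian setting, hence apply fiberwise-uniformly over $T^\circ$.

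Second, I would phrase the desired relation as a vanishing of an explicit cycle. Concretely, the claim $\widetilde{\mathrm{ch}}_k(\CU): P_i h \to P_{i+k} h(k)$ is equivalent to the identity
\[
\mathfrak{q}_{i+k+1} \circ \Delta_* \widetilde{\mathrm{ch}}_k(\CU) \circ \mathfrak{p}_i = 0
\]
on the relative product $(\Sigma \times M^{\mathrm{par}}_0)\times_{\Sigma \times W_0} (\Sigma \times M^{\mathrm{par}}_0)$, exactly as in the proof of Corollary \ref{cor2.4}, where $\mathfrak{p}_i$, $\mathfrak{q}_{i+k+1} = [\Delta] - \mathfrak{p}_{i+k}$ are the projectors of Theorem \ref{thm2.10} obtained by specialization. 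The analogous cycle over $T^\circ$ vanishes by the displayed relation above. Since the relative product over $W_0$ is the fiber at $0\in T$ of the relative product over $W^\circ$ (here one uses that the relevant moduli spaces are flat over $T$ and that $\widetilde W^\circ \to W^\circ$, $T^\circ \hookrightarrow T$ behave well), the specialization homomorphism on Chow groups sends the family of cycles $\mathfrak{q}_{i+k+1,\eta} \circ \Delta_*\widetilde{\mathrm{ch}}_k(\CU) \circ \mathfrak{p}_{i,\eta}$ (identically zero for $\eta\in T^\circ$) to $\mathfrak{q}_{i+k+1,0} \circ \Delta_*\widetilde{\mathrm{ch}}_k(\CU) \circ \mathfrak{p}_{i,0}$, which is therefore also zero. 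This gives the theorem.

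The main obstacle I anticipate is purely bookkeeping: one must check that the specialization map for Chow groups is compatible with the operations of composition of relative correspondences and with cupping by $\widetilde{\mathrm{ch}}_k(\CU)$, i.e.\ that forming the cycle $\mathfrak{q}\circ\Delta_*\widetilde{\mathrm{ch}}_k(\CU)\circ\mathfrak{p}$ commutes with restriction to the special fiber. This requires knowing that the projectors $\mathfrak{p}_{k,0}$ for $h_0$ really are the specializations of the $\mathfrak{p}_{k,\eta}$ (which is precisely how Theorem \ref{thm2.10} defines them), that $\widetilde{\mathrm{ch}}_k(\CU)$ over $\Sigma\times M^{\mathrm{par}}_0$ is the restriction of the class over the family (true, both being pulled back from $\Sigma\times M^{\mathrm{mero}}$), and that all the relative products and diagonals form flat families over $T$ near $0$ so that refined Gysin pullback computes the naive restriction. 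None of these steps is deep, but organizing the compatibilities carefully — in parallel with the discussion preceding Theorem \ref{thm2.10} — is where the real content of the proof lies. Once that is in place, Theorem \ref{thm2.11} follows from Theorem \ref{thm2.8} by specialization just as Theorem \ref{thm2.10} follows from Theorem \ref{thm2.7}.
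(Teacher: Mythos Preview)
Your proposal is correct and follows essentially the same approach as the paper: express the claim as the cycle relation $\mathfrak{q}_{i+k+1}\circ \Delta_*\widetilde{\mathrm{ch}}_k(\CU)\circ \mathfrak{p}_i = 0$ and obtain it by specializing the corresponding relation for $h_\eta$ established in Theorem \ref{thm2.8}. The paper's proof is in fact a one-line version of exactly this argument, so your more detailed discussion of the compatibilities (specialization commuting with composition of correspondences, and $\widetilde{\mathrm{ch}}_k(\CU)$ being pulled back from $\Sigma\times M^{\mathrm{mero}}$) simply makes explicit what the paper leaves implicit.
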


\begin{proof}
Since the statement can be expressed in terms of a relation of algebraic cycles:
\[
\mathfrak{q}_{i+k+1}\circ \Delta_*\widetilde{\mathrm{ch}}_k(\CU) \circ \mathfrak{p}_i =0 
\]
with $\Delta: M^{\mathrm{par}}_0 \hookrightarrow M^{\mathrm{par}}_0\times_{W_0}M^{\mathrm{par}}_0$ the relative diagonal, it follows from specializing the corresponding relation for $h_\eta: M^{\mathrm{par}}_\eta \to W_\eta$ established in Theorem \ref{thm2.8}.
\end{proof}

Similarly, by a straightforward specialization argument, we obtain from Theorem \ref{thm2.9} the following theorem concerning the $\chi$-independence.

\begin{thm}\label{thm2.12}
Assume $d,d'$ are two integers coprime to $n$. Let $M^{\mathrm{par}}_0, {M^{\mathrm{par}}_0}'$ be the parabolic moduli spaces associated with the degrees $d,d'$ respectively. Then there is an isomorphism 
\[
 h(M^{\mathrm{par}}_0) \simeq h({M^{\mathrm{par}}_0}') 
\]
preserving the motivic perverse filtrations
\[
 P_kh(M^{\mathrm{par}}_0) \simeq P_kh({M^{\mathrm{par}}_0}').
 \]
\end{thm}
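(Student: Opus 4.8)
The plan is to deduce the claim from its general-residue counterpart, Theorem \ref{thm2.9}, by the same specialization procedure $\eta \in T^\circ \rightsquigarrow 0 \in T$ already used to pass from Theorem \ref{thm2.8} to Theorems \ref{thm2.10} and \ref{thm2.11}. First I would fix notation: set $e := l(n,d)$ and $e' := l(n,d')$, both coprime to $n$, and recall that the family of spectral curves $C \to \widetilde{W}^\circ$ over $W^\circ$ depends only on the eigenvalue data at $p$ and not on the degree of the Higgs bundle; hence the two parabolic families over $\widetilde{W}^\circ$ are compactified Jacobian fibrations $\overline{J}^e_C$ and $\overline{J}^{e'}_C$ of one and the same curve family, with parabolic Hitchin maps to $W^\circ$. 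As noted at the start of this subsection, the proof of Theorem \ref{thm2.9} in fact yields the statement for the whole family over $W^\circ$: the correspondences $\mathfrak{C}_{e,e'}$ and $\mathfrak{C}_{e',e}$ live in the Chow groups of the relative products over $W^\circ$, are mutually inverse, and satisfy the filtration-preservation relations \eqref{key_assumption} with respect to the (pushed-down) projectors of Theorems \ref{thm2.2} and \ref{thm2.7}.

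Next I would specialize. Restricting the total family over $T^\circ$ to the generic point of the line $T \simeq \BA^1$ and then applying Fulton's specialization homomorphism to the closed point $0 \in T$ produces correspondences $\mathfrak{C}^0_{e,e'}$ and $\mathfrak{C}^0_{e',e}$ in the Chow groups of $M^{\mathrm{par}}_0 \times_{W_0} {M^{\mathrm{par}}_0}'$ and ${M^{\mathrm{par}}_0}' \times_{W_0} M^{\mathrm{par}}_0$. Since all moduli spaces in sight are smooth over $T$ (the map $\chi_p$ is smooth), the specialization map is compatible with proper pushforward, flat pullback and refined Gysin maps, hence with composition of relative correspondences; applying it to $\mathfrak{C}_{e',e}\circ\mathfrak{C}_{e,e'} = [\Delta_{\overline{J}^e_C/W^\circ}]$ and its companion yields $\mathfrak{C}^0_{e',e}\circ\mathfrak{C}^0_{e,e'} = [\Delta_{M^{\mathrm{par}}_0/W_0}]$ and $\mathfrak{C}^0_{e,e'}\circ\mathfrak{C}^0_{e',e} = [\Delta_{{M^{\mathrm{par}}_0}'/W_0}]$. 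Thus $\mathfrak{C}^0_{e,e'}$ is an isomorphism $h(M^{\mathrm{par}}_0) \xrightarrow{\simeq} h({M^{\mathrm{par}}_0}')$ in $\mathrm{CHM}(W_0)$; pushing forward along $W_0 \to \mathrm{pt}$ yields the version over a point.

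For the filtered refinement I would argue that, by construction (Theorem \ref{thm2.10}), the projectors defining the motivic perverse filtrations on $h(M^{\mathrm{par}}_0)$ and $h({M^{\mathrm{par}}_0}')$ are exactly the specializations of the corresponding projectors over $W^\circ$. Because specialization is a ring homomorphism, applying it to \eqref{key_assumption} over $W^\circ$ shows that $\mathfrak{C}^0_{e,e'}$ and $\mathfrak{C}^0_{e',e}$ preserve the specialized filtrations in the sense of the definition preceding Proposition \ref{prop2.2}. That proposition then gives the isomorphisms $P_kh(M^{\mathrm{par}}_0) \simeq P_kh({M^{\mathrm{par}}_0}')$, compatibly with the inclusions into the full motives. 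Note that the proof of Proposition \ref{prop2.2} is a purely formal manipulation of the projectors using only the semi-orthogonality \eqref{eq:semiorth} and the relations \eqref{key_assumption}, so it does not require the realization axiom (b) --- which for $h_0$ is established only in Section \ref{Sec4.1} --- and the present deduction is therefore legitimate at this point.

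I expect the one genuinely technical point to be the compatibility of Fulton's specialization map with the composition of \emph{relative} correspondences over the varying base $W^\circ$: one has to check that the fiber products entering the definition of $\circ$ remain flat over $T$ and that specialization commutes with the refined Gysin pullbacks used there. This is the standard functoriality of specialization, and it is precisely the compatibility already invoked implicitly in the passage from Theorem \ref{thm2.8} to Theorems \ref{thm2.10} and \ref{thm2.11}; in practice it can be handled by the identical argument, so the remainder of the proof is bookkeeping.
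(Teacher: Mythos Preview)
Your proposal is correct and follows essentially the same route as the paper: the paper's proof is the single sentence ``by a straightforward specialization argument, we obtain from Theorem \ref{thm2.9} the following theorem,'' and you have simply unpacked that sentence, spelling out the compatibility of Fulton's specialization with composition of relative correspondences and the passage through Proposition \ref{prop2.2}. Your remark that Proposition \ref{prop2.2} does not rely on the realization axiom (b) is a useful clarification, since the realization part of Theorem \ref{thm2.10} is deferred to Section \ref{Sec4.1}.
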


\begin{rmk}
We conclude this section by noting that the motivic perverse filtration we introduced in Theorem \ref{thm2.10} is independent of the choice of the $1$-dimensional family $T$. Indeed, we consider the Hitchin map associated with the total space of the moduli of stable parabolic Higgs bundles
\[
h: M^{\mathrm{par}} \to W
\]
in Section \ref{Sec1.2}. The Fourier transform can be defined over a Zariski open subset $W^\circ$ of the base $W$ formed by integral spectral curves, and the projectors $\mathfrak{p}_k$ we obtained in Theorem \ref{thm2.11} is the specialization of the projectors over $W^\circ$. Here we choose to work with a $1$-dimensional family $T$ for convenience (\emph{e.g.}~in the sheaf-theoretic argument of Section \ref{Section4}).
\end{rmk}

\section{Projectors and Springer theory}\label{Section3}
In this section, we study the correspondences given by (\ref{corr}) connecting $M^{\mathrm{par}}_0$ and $M_{n,d}$. Our main purpose is to prove Proposition \ref{prop3.4}, which yields a candidate for the motivic perverse filtration (\ref{final_perv}) for $f: M_{n,d} \to B$. For the approach, we reduce the Higgs case to a \emph{local model} where the correspondence (\ref{corr}) is understood via classical Springer theory. This lifts several cohomological results of \cite[Section 8]{HMMS} motivically. Springer-theoretic interpretations of parabolic Hitchin moduli spaces have previously been used in \cite{DGT} in the study of Kac polynomials.

\subsection{Springer theory}

For our purpose, we consider the case $G = \mathrm{PGL}_n$. We use $\mathfrak{g}, \mathfrak{b}, \mathfrak{t}$ to denote the Lie algebras of $G$, the Borel subgroup given by upper triangular matrices, and the maximal torus given by diagonal matrices, respectively. The Weyl group is the permutation group $\mathfrak{S}_n$ which acts on $\mathfrak{t}$ naturally. Let 
\[
\mathfrak{c}: = \mathfrak{g}\sslash G \simeq \mathfrak{t}\sslash \mathfrak{S}_n
\]
be the categorical quotient, and we have a natural morphism
\[
\chi: \mathfrak{g} \to \mathfrak{c}
\]
which can be viewed as calculating the characteristic polynomial of a matrix. We consider the Grothendieck--Springer resolution over the base $\mathfrak{c}$:
 \begin{equation*}
    \begin{tikzcd}
    \widetilde{\mathfrak{g}} \arrow[d, "q"] \arrow[r, ""] & \mathfrak{t} \arrow[d, ""] \\
    \mathfrak{g} \arrow[r, "\chi"] & \mathfrak{c}.
\end{tikzcd}
\end{equation*}
The fibers over $0\in \mathfrak{c}$ recovers the symplectic resolution of the nilpotent cone
\[
q_0: \widetilde{\CN}_0 \to \CN_0
\]
by the total cotangent bundle of the flag variety $\widetilde{\CN}_0 = T^*\mathrm{Fl}$, while the fibers over a general~$\zeta\in \mathfrak{c}$ (\emph{e.g.}~the corresponding matrix has distinct eigenvalues) is a natural quotient by the Weyl group:
\[
q_\zeta: \widetilde{\CN}_\zeta \to {\CN}_\zeta = \widetilde{\CN_\zeta}/\mathfrak{S}_n. 
\]
From this description, we see that any element $w\in \mathfrak{S}_n$ induces a self-correspondence of~$\widetilde{\CN}_\zeta$. As we will illustrate in the next section, the local analogue of the correspondence (\ref{corr}) is (the~$G$-equivariant version of) the following:
\begin{equation}\label{local_corr}
    \begin{tikzcd}
\mathrm{Fl} \arrow[r, "\iota"] \arrow[d, "\pi "]
& \widetilde{\CN}_{0} = T^*\mathrm{Fl} \\
\mathrm{pt}
\end{tikzcd}
\end{equation}
where $\iota$ is the closed embedding of the $0$-section. The following lemma collects a few standard results in Springer theory, which concerns the ($G$-equivariant) self-correspondences
\begin{equation}\label{corr_Fl}
[\mathrm{Fl}] \circ {^\mathfrak{t}}[\mathrm{Fl}] \in \mathrm{CH}_*(\widetilde{\CN_0} \times \widetilde{\CN_0}) = \mathrm{Corr}^*(\widetilde{\CN_0}, \widetilde{\CN_0})
\end{equation}
and
\begin{equation}\label{corr_Fl2}
    {^\mathfrak{t}}[\mathrm{Fl}] \circ [\mathrm{Fl}] \in  \mathrm{CH}_*(\mathrm{pt}) = \mathrm{Corr}^*(\mathrm{pt}, \mathrm{pt}).
    \end{equation}
See \cite[Section 3]{CG} and \cite[Section 1.5.10]{Yun_lecture} for references.

\begin{lem}\label{lem3.1}
The following statements hold for the $G$-equivariant correspondences $[\mathrm{Fl}], {^\mathfrak{t}}[\mathrm{Fl}]$.
\begin{enumerate}
    \item[(a)] The self-correspondence \eqref{corr_Fl} of $\widetilde{\CN}_0$ is the specialization of the self-correspondence of~$\widetilde{\CN}_\zeta$ induced by the longest element $w_0$ in the Weyl group $\mathfrak{S}_n$.
    \item[(b)] The self-correspondence of \eqref{corr_Fl2} of a point is 
    \[
    \varepsilon n!\cdot \mathrm{id} \in \mathrm{Corr}^0(\mathrm{pt}, \mathrm{pt}).
    \]
    Here $\varepsilon =\pm 1$ is a sign dependent on $n$.
    \item[(c)] There is a natural Weyl group $\mathfrak{S}_n$ action on the derived push-forward
\[
Rq_{0*} \BQ_{\widetilde{\CN_0}} \in D^\mathrm{b}_\mathrm{c}(\CN_0)
\]
with the anti-invariant part a shifted skyscraper sheaf
\[
\left(Rq_{0*} \BQ_{\widetilde{\CN_0}} \right)^{\mathrm{sgn}} =\BQ_0\left[ -2\binom{n}{2}\right].
\]
Moreover, the correspondences  
\[
[\mathrm{Fl}]: Rq_{0*} \BQ_{\widetilde{\CN}_0} \to \BQ_0\left[ -2\binom{n}{2} \right], \quad  {^\mathfrak{t}}[\mathrm{Fl}]: \BQ_0\left[ -2\binom{n}{2}\right]\to  Rq_{0*} \BQ_{\widetilde{\CN}_0} \in D^\mathrm{b}_\mathrm{c}(\CN_0)
\]
are projecting to the anti-invariant part and the natural inclusion from the anti-invariant part respectively. 
\end{enumerate}
\end{lem}

\begin{proof}
    Part (b) is the self-intersection of the Lagrangian zero-section 
    \[
    \iota: \mathrm{Fl} \hookrightarrow \widetilde{\CN}_0 = T^*\mathrm{Fl}
    \]
    where the constrant $n!$ is given by the Euler characteristic of the flag variety $\mathrm{Fl}$.

    To see (a) and (c), we recall the Grothendieck--Springer resolution
    \[
    q: \widetilde{\mathfrak{g}} \to \mathfrak{g}
    \]
    which is $G$-equivariant and small; the restriction of $q$ to the regular semisimple locus is a torsor under the Weyl group $\mathfrak{S}_n$. The derived pushforward
    \[
    Rq_* \mathbb{Q}_{\widetilde{\mathfrak{g}}} \in D^b_c(\mathfrak{g})
    \]
    is the intermediate extension of the local systems obtained from the regular semisimple locus, which naturally carries an $\mathfrak{S}_n$-action by the functoriality of intermediate extension. In particular, this induces the $\mathfrak{S}_n$-action on every Springer fiber. The isotypic component with respect to the longest element $w_0$ for the Springer fiber over $0 \in \CN_0 \subset \mathfrak{g}$ is given as in (c). For our purpose, it suffices to show that the generalized eigenspace with respect to $w_0$ is realized as the correspondence (\ref{corr_Fl}). As explained in \cite[Section 1.5.10]{Yun_lecture} and \cite[Remark 3.3.4]{Yun1}, the Springer action of $\mathfrak{S}_n$ can be realized by correspondences of the Steinberg variety $\widetilde{\mathfrak{g}} \times_{\mathfrak{g}} \widetilde{\mathfrak{g}}$. After restricting over $\CN_0$, the irreducible components of the Steinberg variety are parameterized by elements $w \in \mathfrak{S}_n$. More precisely, the Steinberg variety over $\CN_0$ admits a stratification into conormal bundles of $G$-orbits in $\mathrm{Fl}\times \mathrm{Fl}$ (see \cite[Corollary 3.3.5]{CG}), and the element $w_0$ corresponds to the largest open stratum whose conormal bundle has closure
    \[
    \mathrm{Fl} \times \mathrm{Fl} \subset \widetilde{\CN}_0 \times_{\CN_0} \widetilde{\CN}_0.
    \]
    Its induced correspondence is exactly (\ref{corr_Fl}), which proves the lemma.
\end{proof}



In the next section, we will discuss a global version of Lemma \ref{lem3.1} and use it to analyze the correspondence
\begin{equation}\label{MM}
[\widetilde{M}_0] \circ {^\mathfrak{t}}[\widetilde{M}_0] \in \mathrm{Corr}_{W_0}^*(M_0^{\mathrm{par}}, M_0^{\mathrm{par}}).
\end{equation}

\subsection{Hitchin moduli spaces}

Recall the moduli space $M^{\mathrm{mero}}$ of stable meromorphic Higgs bundles of Section \ref{Sec2_last}. It admits a natural evaluation map at the point $p\in \Sigma$ to the stack quotient $\mathfrak{g}/G$:
\begin{equation}\label{evaluation}
\mathrm{ev}_p: M^{\mathrm{mero}} \to \mathfrak{g}/G
\end{equation}
as we introduce below.

We denote by $M^{\mathrm{ab}}$ the \emph{abelian} Hitchin moduli space
\[
M^{\mathrm{ab}} = \mathrm{Pic}^0(\Sigma) \times H^0(\Sigma, \omega_\Sigma(p))
\]
which parameterize rank $1$ Higgs bundles. There is a natural action of $M^{\mathrm{ab}}$ on $M^{\mathrm{mero}}$ by
\[
(\CL, \sigma)\cdot (\CE, \theta) = (\CL\otimes \CE , \sigma+\theta), \quad (\CL, \sigma) \in M^{\mathrm{ab}}, \quad (\CE, \theta)\in M^{\mathrm{mero}}
\]
where the sum $\sigma +\theta$ is defined in the sense that we view both $\sigma, \theta$ as morphisms
\[
\CL\otimes \CE \to \CL\otimes \CE\otimes \omega_\Sigma(p).
\]
The quotient space
\[
\widehat{M}^{\mathrm{mero}}: = M^{\mathrm{mero}}/M^{\mathrm{ab}}
\]
is a nonsingular Deligne--Mumford stack, known as the moduli space of stable meromorphic~$\mathrm{PGL}_n$-Higgs bundles, which admits a natural evaluation map at the point $p$; see \cite[Section 2]{HT0} and \cite[Section 2.4]{dCHM1}. The evaluation map (\ref{evaluation}) for the original moduli space is the composition of the quotient map and the~$\mathrm{PGL}_n$-evaluation map:
\[
M^{\mathrm{mero}} \twoheadrightarrow \widehat{M}^{\mathrm{mero}} \to \mathfrak{g}/G.
\]

\begin{prop}\label{prop3.2}
    The morphism $\mathrm{ev}_p$ in \eqref{evaluation} is smooth and surjective.
\end{prop}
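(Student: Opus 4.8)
The plan is to check smoothness and surjectivity at the level of the deformation theory of meromorphic Higgs bundles, using the hypercohomology description of the tangent space to $M^{\mathrm{mero}}$. Recall that for a meromorphic Higgs bundle $(\CE, \theta)$ with $\theta\colon \CE \to \CE\otimes\omega_\Sigma(p)$, the tangent space at the corresponding point is $\BH^1$ of the two-term complex $C^\bullet = \big[\, \mathcal{E}nd(\CE) \xrightarrow{[\theta,-]} \mathcal{E}nd(\CE)\otimes \omega_\Sigma(p)\,\big]$ placed in degrees $0,1$. The evaluation morphism $\mathrm{ev}_p$ records the conjugacy class of $\mathrm{res}_p(\theta)\in \mathcal{E}nd(\CE_p)$, i.e.\ it is the composite of taking the residue $\CE \mapsto (\CE_p, \mathrm{res}_p(\theta)) \in \mathfrak{gl}_n/\mathrm{GL}_n$ with the projection to $\mathfrak{g}/G = \mathfrak{pgl}_n/\mathrm{PGL}_n$, and on tangent spaces it is induced by the residue map $C^\bullet \to \mathcal{E}nd(\CE_p)[-1]$ (a map of complexes landing in degree $1$, the skyscraper at $p$). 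First I would identify the derivative of $\mathrm{ev}_p$ with the induced map on $\BH^1$ and reduce smoothness to the surjectivity of this map together with a dimension/$\BH^2$-vanishing count.

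Concretely, for smoothness I would use the local-to-global spectral sequence and the exact triangle relating $C^\bullet$ to its restriction at $p$: there is a short exact sequence of complexes $0 \to C^\bullet(-p)|_{\omega_\Sigma} \to C^\bullet \to \mathcal{E}nd(\CE_p)[-1]\to 0$ in an appropriate sense (the kernel being the subcomplex of Higgs fields with honest pole structure, i.e.\ the non-meromorphic piece), which on $\BH^1$ gives a long exact sequence whose connecting map is exactly $d(\mathrm{ev}_p)$. Surjectivity of $d(\mathrm{ev}_p)$ then follows once $\BH^2(C^\bullet(-p)|_{\omega_\Sigma}) = 0$, which by Serre duality on the curve is dual to $\BH^0$ of the Serre-dual complex; this vanishing is where I would invoke stability of $(\CE,\theta)$ (a stable Higgs bundle is simple, so the only global endomorphisms are scalars, and after twisting down by $-p$ these die). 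Because $M^{\mathrm{mero}}$ is a smooth variety of the known dimension $n^2(2g-2+1)+2 = n^2(2g-1)+2$ (one extra $\omega_\Sigma(p)$-worth of Higgs field compared to the usual case) and $\mathfrak{g}/G$ has dimension $0$ as a stack with $\mathfrak{g}$ itself of dimension $n^2-1$, a clean way to finish is: the fiber of $\mathrm{ev}_p$ over a point of $\mathfrak{g}/G$ with a fixed residue is a moduli space of \emph{parabolic-type} Higgs bundles with prescribed residual eigenvalues, which is itself smooth of the expected complementary dimension (this is essentially the statement recalled around \eqref{chi_p} that $\chi_p$ is smooth), so smoothness of $\mathrm{ev}_p$ follows from smoothness of total space and fibers plus the dimension count.

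For surjectivity, I would argue that every adjoint orbit closure in $\mathfrak{g}$ is hit: given any $\xi \in \mathfrak{g}$, one must produce a stable meromorphic Higgs bundle whose residue at $p$ is conjugate to $\xi$. The cleanest route is to build an explicit example on the spectral side — pick a spectral curve $C_b \subset \mathrm{Tot}_\Sigma(\omega_\Sigma(p))$ whose fiber over $p$ realizes the eigenvalues of $\xi$ with the correct multiplicities (using that $H^0(\Sigma,\omega_\Sigma(p))$ has enough sections to prescribe the value at $p$ freely, via Lemma \ref{lem1.20} the sum-zero constraint is automatic in the $\mathrm{PGL}_n$ picture), and take a line bundle of suitable degree on $C_b$, then push forward to get a Higgs bundle; stability for generic such data is standard, and the residue of the pushed-forward Higgs field at $p$ has the prescribed characteristic polynomial. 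For non-regular $\xi$ (repeated eigenvalues, nontrivial Jordan type) one uses reducible/non-reduced spectral curves or, more robustly, a direct deformation argument: the image of $\mathrm{ev}_p$ is constructible and $G$-invariant, contains a dense open (the regular semisimple locus, handled by the integral spectral curve construction), and is closed because $f$ is proper — so it is everything. The main obstacle I anticipate is the bookkeeping in the non-regular residue case, i.e.\ checking that prescribed Jordan types (not merely characteristic polynomials) are achieved; I expect this to be handled either by the smoothness already established (a dominant smooth map to $\mathfrak{g}/G$ is automatically surjective onto the image, which is open, and then closedness finishes it) or by citing the corresponding statement for the parabolic Hitchin map, so that surjectivity is in the end a formal consequence of smoothness plus properness rather than requiring explicit constructions for every orbit.
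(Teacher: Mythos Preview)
Your smoothness strategy via deformation theory is exactly what the paper invokes: its proof simply cites \cite[Proposition~4.1]{MS_HT} for the smoothness of the $\mathrm{PGL}_n$-evaluation map and notes that the quotient $M^{\mathrm{mero}}\twoheadrightarrow\widehat{M}^{\mathrm{mero}}$ is smooth. However, your computation of $\BH^2$ of the kernel complex is not correct. If $C^\bullet_0 = [\mathcal{E}nd(\CE)\to\mathcal{E}nd(\CE)\otimes\omega_\Sigma]$ is the kernel of the residue sequence, Serre duality gives $\BH^2(C^\bullet_0)\cong\BH^0(C^\bullet_0)^\vee$, and $\BH^0(C^\bullet_0)$ consists of Higgs endomorphisms of $(\CE,\theta)$, which is $\BC$ by stability --- there is no twist by $\CO(-p)$ in degree~$0$, so the scalars do \emph{not} die. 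The resulting one-dimensional cokernel of $d(\mathrm{ev}_p)\colon\BH^1(C^\bullet)\to\mathfrak{gl}_n$ is precisely the trace direction, which disappears only upon passing to $\mathfrak{g}=\mathfrak{pgl}_n$; this is exactly why the paper works with $\mathrm{PGL}_n$ (see the remark immediately following the proposition).

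For surjectivity there is a genuine gap: properness of the Hitchin map $f$ says nothing about the image of $\mathrm{ev}_p$ being closed in $\mathfrak{g}/G$, since $\mathrm{ev}_p$ is not proper ($M^{\mathrm{mero}}$ is not proper). The paper does not argue surjectivity separately, presumably folding it into the citation. A correct argument runs as follows: smoothness makes the image a nonempty $G$-invariant open $U\subset\mathfrak{g}$, which is also $\BC^*$-invariant because scaling the Higgs field scales the residue. The complement $Z=\mathfrak{g}\setminus U$ is then closed and $\BC^*$-invariant, so if nonempty it contains $0$; but $0$ lies in the image (any holomorphic Higgs bundle, i.e.\ one with $\theta\in H^0(\mathcal{E}nd(\CE)\otimes\omega_\Sigma)$, has zero residue at $p$), forcing $Z=\emptyset$.
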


\begin{proof}
    The quotient map $M^{\mathrm{mero}} \twoheadrightarrow \widehat{M}^{\mathrm{mero}}$ is smooth. Therefore, the smoothness of the composition follows from the smoothness of the second evaluation map which has been proven in \cite[Proposition 4.1]{MS_HT} via deformation theory.
\end{proof}

\begin{rmk}
    We do not work directly with the evaluation map associated with $\mathrm{GL}_n$ for the following reasons. First, Lemma \ref{lem1.20} implies that the image of $\mathrm{ev}_p$ for a $\mathrm{GL}_n$-Higgs bundle has trace $0$; in particular the trace lies naturally in 
    \[
    \mathfrak{g}=\{\mathrm{tr} = 0\} \subset \mathfrak{gl}_n.
    \]
    Second, when we pass from the moduli stack of $\mathrm{GL}_n$-stable Higgs bundle to the corresponding moduli space, we need to rigidify the automorphism group $\BC^* \subset \mathrm{GL}_n$. Working with the evaluation map associated with $G = \mathrm{PGL}_n$ is more convenient which resolves both issues.
\end{rmk}

Now we may use the evaluation map to pull back natural morphisms in Springer theory. As a first example, the pullback of (\ref{evaluation}) via the ($G$-equivariant) Grothendieck--Springer resolution recovers the parabolic moduli space

\begin{equation*}
\begin{tikzcd}
M^{\mathrm{par}} \arrow[r, "\mathrm{ev}_p"] \arrow[d, ""]
& \widetilde{\mathfrak{g}}/G\arrow[d, "q"] \\
M^{\mathrm{mero}} \arrow[r,"\mathrm{ev}_p"]
& \mathfrak{g}/G;
\end{tikzcd}
\end{equation*}
see \cite[Section 2.1]{Yun1}. This further yields the specialization
\begin{equation} \label{sp_diagram2}
\mathrm{ev}_p: M^{\mathrm{par}}_\zeta \rightarrow \widetilde{\CN}_\zeta/G \quad \rightsquigarrow \quad \mathrm{ev}_p: M^{\mathrm{par}}_0 \rightarrow \widetilde{\CN}_0/G.
\end{equation}
Finally, the pullback along $\mathrm{ev}_p$ of the $G$-equivariant version of the diagram (\ref{local_corr})
\begin{equation}\label{G_23}
    \begin{tikzcd}
\mathrm{Fl}/G \arrow[r, "\iota"] \arrow[d, "\pi "]
& \widetilde{\CN}_{0}/G \\
\mathrm{pt}/G
\end{tikzcd}
\end{equation}
recovers the diagram (\ref{corr}). Since all subsequent maps $\mathrm{ev}_p$ are obtained via base change from~(\ref{evaluation}), they are all smooth and surjective by Proposition \ref{prop3.2}.

Recall from Section \ref{Sec1.2}(a) that a general point $\eta \in T$ maps to a point $\zeta \in \overline{B}^{\mathrm{par}}$ (by abuse of notation), which forgets the order of the prescribed eigenvalues at $p$. The universal spectral curve $C_\eta \to W_\eta$ is obtained via the base change
\[
\begin{tikzcd}
    C_\eta \arrow[d, "\simeq"] \arrow[r, ""] & W_\eta \arrow[d, "\simeq"] \\
    C_\zeta \arrow[r, ""] & B_\zeta^{\mathrm{par}}
\end{tikzcd}
\]
which further induces the base change
\[
\begin{tikzcd}
    M^{\mathrm{par}}_\eta \arrow[d, "\simeq"] \arrow[r, ""] & W_\eta \arrow[d, "\simeq"] \\
    \overline{J}^e_{C_\zeta} \arrow[r, ""] & B_\zeta^{\mathrm{par}}.
\end{tikzcd}
\]
We also note that the fiber $M^{\mathrm{par}}_\zeta$ over $\zeta \in \overline{B}^{\mathrm{par}}$ consists of $n!$ copies of $\overline{J}^e_{C_\zeta}$ corresponding to the different orders of the eigenvalues at $p$, and that $M^{\mathrm{par}}_\eta$ is one of them.

\begin{prop}\label{prop3.4}
Assume that the self-correspondence
\[
\mathfrak{Z}_\eta \in \mathrm{Corr}_{W_\eta}^*(M^{\mathrm{par}}_\eta, M^{\mathrm{par}}_\eta)
\]
is pulled back from $\overline{J}^e_{C_\zeta} \times_{B^{\mathrm{par}}_\zeta}\overline{J}^e_{C_\zeta}$. Let 
\[
\mathfrak{Z}_0 \in \mathrm{Corr}^*_{W_0}(M^{\mathrm{par}}_0, M^{\mathrm{par}}_0)
\]
be the specialization of $\mathfrak{Z}_\eta$. Then \eqref{MM} commutes with $\mathfrak{Z}_0$, \emph{i.e.}, we have
\[
\left([\widetilde{M}_0] \circ {^\mathfrak{t}}[\widetilde{M}_0]\right) \circ \mathfrak{Z}_0 =  \mathfrak{Z}_0\circ \left([\widetilde{M}_0] \circ {^\mathfrak{t}}[\widetilde{M}_0]\right) \in \mathrm{Corr}^*_{W_0}(M^{\mathrm{par}}_0, M^{\mathrm{par}}_0). 
\]
\end{prop}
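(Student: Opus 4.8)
The plan is to move everything to the generic locus $T^\circ \subset T$ of Section~\ref{Sec1.2}(a), where the self-correspondence $[\widetilde{M}_0]\circ{}^{\mathfrak{t}}[\widetilde{M}_0]$ is realized as the graph of a Weyl-group element, prove the commutation there by an invariance argument, and then specialize $\eta \rightsquigarrow 0$. As recalled before the proposition, the correspondence diagram (\ref{corr}) is the pullback along the smooth surjective evaluation map $\mathrm{ev}_p$ of the $G$-equivariant Springer diagram (\ref{G_23}); since flat pullback is compatible with composition of relative correspondences, the class (\ref{MM}) equals the $\mathrm{ev}_p$-pullback of the local self-correspondence $[\mathrm{Fl}]\circ{}^{\mathfrak{t}}[\mathrm{Fl}]$, taken relatively and $G$-equivariantly over $\CN_0/G$ as in Remark~\ref{rmk3.2}. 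By Lemma~\ref{lem3.1}(a) this local self-correspondence is the specialization $\mathrm{sp}_{\eta\rightsquigarrow 0}$ of the graph of the longest element $w_0 \in \mathfrak{S}_n$ acting on $\widetilde{\CN}_\eta/G$. Pulling back along $\mathrm{ev}_p$ and using that smooth pullback and specialization both commute with composition, we obtain
\[
[\widetilde{M}_0]\circ{}^{\mathfrak{t}}[\widetilde{M}_0] = \mathrm{sp}_{\eta\rightsquigarrow 0}\bigl(\Gamma^{\mathrm{par}}_{w_0}\bigr),
\]
where $\Gamma^{\mathrm{par}}_{w_0} \in \mathrm{Corr}^*_{W_\eta}(M^{\mathrm{par}}_\eta, M^{\mathrm{par}}_\eta)$ is the graph of the automorphism of $M^{\mathrm{par}}_\eta$ induced by $w_0$ through the $\mathfrak{S}_n$-quotient $M^{\mathrm{par}}_\eta \to \overline{J}^e_C$; here we use that this $\mathfrak{S}_n$-action coincides with the $\mathrm{ev}_p$-pullback of the Weyl action on $\widetilde{\CN}_\eta$ (cf.\ \cite[Section 8]{HMMS}, \cite{DGT}).

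Since $\mathfrak{Z}_0$ is by hypothesis the specialization of $\mathfrak{Z}_\eta$, and the specialization map respects composition of relative correspondences — as already exploited in Section~\ref{Section2} in the passage from $h^\circ$ to $h_0$ — it now suffices to establish the commutation over $T^\circ$:
\[
\Gamma^{\mathrm{par}}_{w_0}\circ\mathfrak{Z}_\eta = \mathfrak{Z}_\eta\circ\Gamma^{\mathrm{par}}_{w_0} \in \mathrm{Corr}^*_{W_\eta}(M^{\mathrm{par}}_\eta, M^{\mathrm{par}}_\eta).
\]
Because $M^{\mathrm{par}}_\eta \to \overline{J}^e_C$ is the quotient by a free $\mathfrak{S}_n$-action (over $T^\circ$ the spectral curves have $n$ distinct points over $p$, which $\mathfrak{S}_n$ permutes), the induced map $M^{\mathrm{par}}_\eta \times_{W_\eta} M^{\mathrm{par}}_\eta \to \overline{J}^e_C \times_{W_\eta} \overline{J}^e_C$ is the quotient by $\mathfrak{S}_n \times \mathfrak{S}_n$; hence $\mathfrak{Z}_\eta$, being pulled back along it, is $\mathfrak{S}_n \times \mathfrak{S}_n$-invariant as a cycle, and in particular $(w_0 \times w_0)_*\mathfrak{Z}_\eta = \mathfrak{Z}_\eta$. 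On the other hand, for the graph $\Gamma_w$ of any automorphism $w$ of $M^{\mathrm{par}}_\eta$ over $W_\eta$ one has $\Gamma_w\circ\mathfrak{Z} = (\mathrm{id}\times w)_*\mathfrak{Z}$ and $\mathfrak{Z}\circ\Gamma_w = (w^{-1}\times\mathrm{id})_*\mathfrak{Z}$ for any self-correspondence $\mathfrak{Z}$, so $\Gamma_w$ commutes with $\mathfrak{Z}$ exactly when $(w\times w)_*\mathfrak{Z} = \mathfrak{Z}$. Applying this with $w = w_0$ and $\mathfrak{Z} = \mathfrak{Z}_\eta$ gives the displayed identity over $T^\circ$, and specializing finishes the proof.

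The substantive point is the identification in the first paragraph of $[\widetilde{M}_0]\circ{}^{\mathfrak{t}}[\widetilde{M}_0]$ with $\mathrm{sp}_{\eta\rightsquigarrow 0}(\Gamma^{\mathrm{par}}_{w_0})$: this requires matching the global correspondence $[\widetilde{M}_0]$ with the $\mathrm{ev}_p$-pullback of $[\mathrm{Fl}/G]$ — tracking the codimension-$\binom{n}{2}$ shifts and the relative structures over $B$, $W_0$ and $\CN_0/G$ — and, most delicately, checking that the $\mathfrak{S}_n$-action on $M^{\mathrm{par}}_\eta$ coming from the cover $\widetilde{W}_\eta \to W_\eta$ agrees with the one pulled back from the Weyl action on $\widetilde{\CN}_\eta$ along $\mathrm{ev}_p$, so that Lemma~\ref{lem3.1}(a) may be invoked globally. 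Granting this, together with the standard compatibility of flat pullback and of specialization with composition of relative correspondences, the rest — invariance implies commutation, then specialization — is formal.
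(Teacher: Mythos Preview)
Your proof is correct and follows essentially the same route as the paper: identify $[\widetilde{M}_0]\circ{}^{\mathfrak{t}}[\widetilde{M}_0]$ with the $\mathrm{ev}_p$-pullback of the local Springer correspondence, invoke Lemma~\ref{lem3.1}(a) to recognize it as the specialization of the $w_0$-graph on $M^{\mathrm{par}}_\eta$, observe that this commutes with any correspondence pulled back from the $\mathfrak{S}_n$-quotient $\overline{J}^e_C$, and specialize. Your write-up is in fact more explicit than the paper's on two points the paper leaves terse --- the invariance argument $(w_0\times w_0)_*\mathfrak{Z}_\eta=\mathfrak{Z}_\eta$ and the compatibilities of pullback/specialization with composition --- and your closing paragraph correctly flags the one genuinely delicate identification (that the two $\mathfrak{S}_n$-actions on $M^{\mathrm{par}}_\eta$ agree), which the paper also takes for granted.
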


\begin{proof}
Since the diagram (\ref{corr}) is induced by the pullback of the diagram (\ref{G_23}) via the evaluation maps, we obtain that 
\begin{equation}\label{corrr}
\mathrm{ev}^* \left([\mathrm{Fl}/G] \circ {^\mathfrak{t}}[\mathrm{Fl}/G]\right) = [\widetilde{M}_0] \circ {^\mathfrak{t}}[\widetilde{M}_0].
\end{equation}
Here the evaluation map is given by
\[
\mathrm{ev}: M_0^{\mathrm{par}}\times_{W_0} M_0^{\mathrm{par}} \to \widetilde{\CN}_0/G \times \widetilde{\CN}_0/G.
\]
Lemma \ref{lem3.1}(a) then implies that the correspondence (\ref{corrr}) is the specialization via \eqref{sp_diagram2} of a self-correspondence of $M^{\mathrm{par}}_\zeta$ induced by $w_0 \in \mathfrak{S}_n$. 

For our purpose we need to compare the specialization via \eqref{sp_diagram} which is used to define the correspondence $\mathcal{Z}_0$, to the specialization via \eqref{sp_diagram2} which yields \eqref{corrr}. The former is the specialization coming from the family $\chi_p: M^{\mathrm{par}} \to \CA$ as in \eqref{chi_p}, which is smooth and compatible with intersection-theoretic operations. The latter can be rephrased as coming from the family given by the composition
\[
M^{\mathrm{par}} \xrightarrow{\chi_p} \CA \to \CA\sslash \mathfrak{S}_n \simeq \overline{B}^{\mathrm{par}}.
\]
This family is smooth in the following \emph{stacky} sense: we consider instead the composition of~$\chi_p$ with the map to the stack quotient
\begin{equation} \label{eq:stackfam}
M^{\mathrm{par}} \xrightarrow{\chi_p} \CA \to \CA/\mathfrak{S}_n.
\end{equation}
The specialization map for Chow groups with respect to the family \eqref{eq:stackfam} is again compatible with intersection-theoretic operations. Note that since we are dealing with quotient stacks, the required intersection theory, including the functorialities of the first six chapters of \cite{Ful} needed to define the specialization map, has been established in \cite{EG}. Further, by identifying the Chow groups/rings of a Deligne--Mumford stack and its coarse moduli space, we see that for cycles pulled back from correspondences for $\overline{J}^e_{C_\zeta}$ relative over $B_\zeta^{\mathrm{par}}$, the two specializations via \eqref{sp_diagram} and \eqref{sp_diagram2} only differ by a nonzero constant.

Finally, the pullback of any self-correspondence of $\overline{J}^e_{C_\zeta}$ to $M^{\mathrm{par}}_\zeta$ is invariant under the~$\mathfrak{S}_n$-action permuting the $n!$ copies of $\overline{J}^e_{C_\zeta}$, and hence commutes with the aforementioned self-correspondence of $M^{\mathrm{par}}_\zeta$ induced by \mbox{$\omega_0 \in \mathfrak{S}_n$}. The commutativity is preserved by specializing via the family \eqref{eq:stackfam}, which proves the proposition.
\end{proof}

Recall the morphism
\[
\Gamma: \mathrm{Corr}_{W_0}^0(M_0^{\mathrm{par}}, M^{\mathrm{par}}_0) \to \mathrm{Corr}_{B}^0(M_{n,d}, M_{n,d})
\]
from \eqref{eq:defGamma}. From now on, we use
\[
\mathfrak{p}^{\mathrm{par}}_k \in \mathrm{Corr}_{W_0}^0(M_0^{\mathrm{par}}, M^{\mathrm{par}}_0), \quad k=0,1, \cdots, 2(\dim M_0^{\mathrm{par}} - \dim W_0)
\]
to denote the projectors associated with the motivic perverse filtration of Theorem \ref{thm2.10}.

\begin{prop}\label{prop3.5}
The following statements hold.
\begin{enumerate}
    \item[(a)] We have
    \[    \Gamma([\Delta_{M_0^{\mathrm{par}}/W_0}]) = \varepsilon n!\cdot [\Delta_{M_{n,d}/B}].
    \]
    Here $\varepsilon  = \pm 1$ is as in Lemma \ref{lem3.1}(b).
    \item[(b)] We have the vanishing
    \[
    \Gamma(\mathfrak{p}_k^{\mathrm{par}}) = 0, \quad k < \binom{n}{2}.    \]
    \item[(c)] We have the stabilization 
    \[    \Gamma(\mathfrak{p}_k^{\mathrm{par}}) = [\Delta_{M_{n,d}/B}], \quad k\geq 2(\dim M_0^{\mathrm{par}} - \dim W_0) -\binom{n}{2}.    \]
  \item[(d)] We have the semi-orthogonality relation
  \[
 \Gamma(\mathfrak{p}^{\mathrm{par}}_l) \circ \Gamma(\mathfrak{p}^{\mathrm{par}}_k) = \varepsilon n! \cdot \Gamma(\mathfrak{p}^{\mathrm{par}}_k), \quad k < l.
 \]
\end{enumerate}
\end{prop}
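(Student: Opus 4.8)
The plan is to establish the four statements in turn, using the Springer‑theoretic input (Lemma~\ref{lem3.1}(b) and Proposition~\ref{prop3.4}) for (a) and (d), and the vanishing criterion Lemma~\ref{lem1.2} applied to the Fourier description of the projectors (Theorems~\ref{thm2.2}, \ref{thm2.7}, \ref{thm2.10}) for (b) and (c). Write $R:=\dim M^{\mathrm{par}}_0-\dim W_0$ for the defect of semismallness of $h_0$; by the construction of the motivic perverse filtration (Theorem~\ref{thm2.10} via Theorems~\ref{thm2.2}, \ref{thm2.7}) this equals the arithmetic genus $g$ of the spectral curves, so the index $k$ ranges over $0\le k\le 2g$. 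I will also fix a smooth irreducible variety $Z$, flat and surjective over $W_0$, playing the role of the ``degree $1$'' dual of $M^{\mathrm{par}}_0$: over the integral locus $W^{\circ}_0\subset W_0$ it is the compactified Jacobian fibration $\overline{J}^1_C$ of Section~\ref{sec2.2}, and globally it is the moduli of degree $1$ strongly parabolic Higgs bundles (smooth since $\gcd(1,n)=1$, with $\dim Z-\dim W_0=g$). The Fourier components $(\mathfrak{F}_{1,d})_i$, $(\mathfrak{F}^{-1}_{1,d})_i$ then live as relative correspondences between $Z$ and $M^{\mathrm{par}}_0$ over $W_0$, obtained by specializing those over $\widetilde{W}_\eta$ as $\eta\to 0$.

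For (a): since $\Gamma([\Delta_{M^{\mathrm{par}}_0/W_0}])={}^{\mathfrak{t}}[\widetilde{M}_0]\circ[\widetilde{M}_0]$, I would argue exactly as for \eqref{corrr} in the proof of Proposition~\ref{prop3.4}, but in the opposite composition order: pulling the $G$‑equivariant Springer diagram \eqref{G_23} back along the smooth surjective evaluation maps identifies ${}^{\mathfrak{t}}[\widetilde{M}_0]\circ[\widetilde{M}_0]$ with $\mathrm{ev}^{*}\big({}^{\mathfrak{t}}[\mathrm{Fl}/G]\circ[\mathrm{Fl}/G]\big)$, and the $G$‑equivariant, $\CN_0$‑relative form of Lemma~\ref{lem3.1}(b) (Remark~\ref{rmk3.2}) gives ${}^{\mathfrak{t}}[\mathrm{Fl}/G]\circ[\mathrm{Fl}/G]=\epsilon n!\cdot\mathrm{id}$. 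Using that these correspondences are relative over (the image of) $\mathrm{pt}/G$ and that $M_{n,d}\times_{W_0}M_{n,d}=M_{n,d}\times_B M_{n,d}$, this yields $\Gamma([\Delta_{M^{\mathrm{par}}_0/W_0}])=\epsilon n!\cdot[\Delta_{M_{n,d}/B}]$. (An alternative is a direct excess‑intersection computation along the zero section $\widetilde{M}_0\hookrightarrow M^{\mathrm{par}}_0$, whose normal bundle restricts fibrewise to the cotangent bundle of $\mathrm{Fl}_n$, with $\int_{\mathrm{Fl}_n}e(T\mathrm{Fl}_n)=n!$ and sign $\epsilon=(-1)^{\binom{n}{2}}$.)

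For (b) and (c): by Theorems~\ref{thm2.2}, \ref{thm2.7}, \ref{thm2.10} one has $\mathfrak{p}^{\mathrm{par}}_k=\sum_{i\le k}(\mathfrak{F}_{1,d})_i\circ(\mathfrak{F}^{-1}_{1,d})_{2g-i}$, and setting $\mathfrak{q}^{\mathrm{par}}_{k+1}:=[\Delta_{M^{\mathrm{par}}_0/W_0}]-\mathfrak{p}^{\mathrm{par}}_k=\sum_{i\ge k+1}(\mathfrak{F}_{1,d})_i\circ(\mathfrak{F}^{-1}_{1,d})_{2g-i}$, each summand factors through $Z$ as $\alpha_2\circ\alpha_1$ with $\alpha_1=(\mathfrak{F}^{-1}_{1,d})_{2g-i}\colon M^{\mathrm{par}}_0\to Z$ and $\alpha_2=(\mathfrak{F}_{1,d})_i\colon Z\to M^{\mathrm{par}}_0$. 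A routine degree count (using $\dim Z-\dim W_0=g$) shows $\alpha_1$ has codimension $2g-i$ in $M^{\mathrm{par}}_0\times_{W_0}Z$ and $\alpha_2$ has codimension $i$ in $Z\times_{W_0}M^{\mathrm{par}}_0$. For (b), $k<\binom{n}{2}$ forces $i\le k<\binom{n}{2}$ on every summand of $\mathfrak{p}^{\mathrm{par}}_k$, so Lemma~\ref{lem1.2} (second hypothesis, with this $Z$) kills each $\Gamma\big((\mathfrak{F}_{1,d})_i\circ(\mathfrak{F}^{-1}_{1,d})_{2g-i}\big)$ and hence $\Gamma(\mathfrak{p}^{\mathrm{par}}_k)=0$ by linearity of $\Gamma$. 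For (c), $k\ge 2g-\binom{n}{2}=2R-\binom{n}{2}$ forces $2g-i\le 2g-(k+1)<\binom{n}{2}$ on every summand of $\mathfrak{q}^{\mathrm{par}}_{k+1}$, so Lemma~\ref{lem1.2} (first hypothesis) gives $\Gamma(\mathfrak{q}^{\mathrm{par}}_{k+1})=0$, whence $\Gamma(\mathfrak{p}^{\mathrm{par}}_k)=\Gamma([\Delta_{M^{\mathrm{par}}_0/W_0}])$, which by (a) equals $\epsilon n!\cdot[\Delta_{M_{n,d}/B}]$.

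For (d): write $A:=[\widetilde{M}_0]\circ{}^{\mathfrak{t}}[\widetilde{M}_0]\in\mathrm{Corr}^{*}_{W_0}(M^{\mathrm{par}}_0,M^{\mathrm{par}}_0)$, so that $\Gamma(\mathfrak{p}^{\mathrm{par}}_l)\circ\Gamma(\mathfrak{p}^{\mathrm{par}}_k)={}^{\mathfrak{t}}[\widetilde{M}_0]\circ\big(\mathfrak{p}^{\mathrm{par}}_l\circ A\circ\mathfrak{p}^{\mathrm{par}}_k\big)\circ[\widetilde{M}_0]$. By Theorem~\ref{thm2.7} (and its specialization Theorem~\ref{thm2.10}) the $\mathfrak{p}^{\mathrm{par}}_k$ are pulled back from self‑correspondences of $\overline{J}^e_C=M^{\mathrm{par}}_\eta/\mathfrak{S}_n$, so Proposition~\ref{prop3.4} gives $A\circ\mathfrak{p}^{\mathrm{par}}_k=\mathfrak{p}^{\mathrm{par}}_k\circ A$; combined with $\mathfrak{p}^{\mathrm{par}}_l\circ\mathfrak{p}^{\mathrm{par}}_k=\mathfrak{p}^{\mathrm{par}}_k$ for $k<l$ (this is \eqref{eq:semiorth} for the motivic perverse filtration of $h_0$) one gets $\mathfrak{p}^{\mathrm{par}}_l\circ A\circ\mathfrak{p}^{\mathrm{par}}_k=\mathfrak{p}^{\mathrm{par}}_l\circ\mathfrak{p}^{\mathrm{par}}_k\circ A=\mathfrak{p}^{\mathrm{par}}_k\circ A$, and substituting back together with ${}^{\mathfrak{t}}[\widetilde{M}_0]\circ[\widetilde{M}_0]=\epsilon n!\cdot[\Delta_{M_{n,d}/B}]$ from (a) yields $\Gamma(\mathfrak{p}^{\mathrm{par}}_l)\circ\Gamma(\mathfrak{p}^{\mathrm{par}}_k)=\epsilon n!\cdot\Gamma(\mathfrak{p}^{\mathrm{par}}_k)$. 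I expect the main obstacle to be part (a): promoting the numerical Springer identity Lemma~\ref{lem3.1}(b) to its $G$‑equivariant, $\CN_0$‑relative form and checking that base change along the evaluation maps (which are not morphisms over $W_0$) genuinely produces $\epsilon n!\cdot[\Delta_{M_{n,d}/B}]$ as a relative correspondence over $B$. A secondary subtlety, needed for (b) and (c), is making the auxiliary space $Z$ and the factorizations of $\mathfrak{p}^{\mathrm{par}}_k$ through it precise over all of $W_0$ (not merely over the integral locus), which requires tracking the Fourier components through the pushforward $\widetilde{W}_\eta\to W_\eta$ and the specialization $\eta\to 0$, both of which are compatible with composition of relative correspondences but must be invoked carefully.
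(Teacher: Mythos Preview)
Your proposal is correct and matches the paper's approach: (a) by pulling back the Springer identity of Lemma~\ref{lem3.1}(b) along the evaluation maps, (b) and (c) by applying Lemma~\ref{lem1.2} to the Fourier factorization \eqref{motivic_p_k} of the projectors, and (d) by combining Proposition~\ref{prop3.4} with (a) and the semi-orthogonality \eqref{eq:semiorth}. Note that your computation in (c) correctly yields $\epsilon n!\cdot[\Delta_{M_{n,d}/B}]$ rather than $[\Delta_{M_{n,d}/B}]$ as stated; this is consistent with (a) and with the later normalization in \eqref{final_perv}, so the discrepancy is a typo in the statement rather than an error in your argument.
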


\begin{proof}
(a) follows from pulling back the local statement of Lemma \ref{lem3.1}(b). (b) and (c) follow from Lemma \ref{lem1.2} and the construction of $\mathfrak{p}^{\mathrm{par}}_k$ in terms of the specialization of algebraic cycles of the form (\ref{motivic_p_k}). (d) is a consequence of part (a) and Proposition \ref{prop3.4}:
\begin{align*}
     \Gamma(\mathfrak{p}^{\mathrm{par}}_l) \circ \Gamma(\mathfrak{p}^{\mathrm{par}}_k) & =  {^\mathfrak{t}[\widetilde{M}_0]}\circ\mathfrak{p}^{\mathrm{par}}_l
     \circ \left([\widetilde{M}_0] \circ {^\mathfrak{t}[\widetilde{M}_0]} \right)\circ\mathfrak{p}^{\mathrm{par}}_k
     \circ [\widetilde{M}_0] \\
     & = \left({^\mathfrak{t}[\widetilde{M}_0]}  \circ [\widetilde{M}_0] \right) \circ {^\mathfrak{t}[\widetilde{M}_0]}    \circ \left(\mathfrak{p}^{\mathrm{par}}_l  \circ\mathfrak{p}^{\mathrm{par}}_k \right)
     \circ [\widetilde{M}_0] \\
     & = \varepsilon n!\cdot  {^\mathfrak{t}[\widetilde{M}_0]}    \circ \mathfrak{p}^{\mathrm{par}}_k \circ [\widetilde{M}_0]    \\
     & = \varepsilon n! \cdot \Gamma(\mathfrak{p}^{\mathrm{par}}_k).
\end{align*}
Here we have used that every $\mathfrak{p}_k^{\mathrm{par}}$ satisfies the assumption of $\mathfrak{Z}_0$ in Proposition \ref{prop3.4}, and we applied (a) to deduce the third identity.
\end{proof}

By a direct dimension calculation, we have
\[
(\dim M_0^{\mathrm{par}} - \dim W_0) - \binom{n}{2} = d_f.
\]
Therefore, after rescaling and relabelling as
\[
\mathfrak{p}_k:= \frac{\varepsilon}{n!} \cdot \Gamma\left(\mathfrak{p}^{\mathrm{par}}_{k-\binom{n}{2}}\right), 
\]
we obtain the projectors
\begin{equation}\label{final_perv}
\mathfrak{p}_k \in \mathrm{Corr}_B^0(M_{n,d}, M_{n,d}), \quad k=0,1,\cdots, 2d_f.
\end{equation}
In Section \ref{Section4}, we will prove that (\ref{final_perv}) forms a motivic perverse filtration; as for Theorem \ref{thm2.10}, we only need to show that the homological realization of $\mathfrak{p}_k$ recovers the perverse truncation functor.

\section{Sheaf-theoretic operations}\label{Section4}

The purpose of this section is to prove a general specialization result for the decomposition theorem (Proposition \ref{prop4.1}). Then we apply it to the Hitchin systems we studied in the previous sections and complete the proofs of all the main theorems.


\subsection{Specializations}\label{Sec4.1}

In this section, we finish the proof of Theorem \ref{thm2.10} by addressing the compatibility of the Chow and sheaf-theoretic specialization maps, and their relationship with the perverse truncation. 

Our setup is the following: let $T \simeq \mathbb{A}^1$ and let $\pi: B \to T$ be a smooth morphism. Consider two proper morphisms $f: X \to B$, $g: Y \to B$ such that the compositions $\pi \circ f: X \to T$, $\pi \circ g: Y \to T$ are smooth:
\[
\begin{tikzcd}
X \arrow[r, "f"] \arrow[rd, ""] & B \arrow[d, "\pi"] & Y \arrow[l, "g", swap] \arrow[ld, ""] \\
& T.
\end{tikzcd}
\]
We think of $f: X \to B$, $g: Y \to B$ as two families of proper morphisms over $T$.

Suppose we are given over a nonempty open subset $T^\circ \subset T$ a correspondence
\begin{equation} \label{eq:defz}
\FZ \in \Corr^*_{B_{T^\circ}}(X_{T^\circ}, Y_{T^\circ}) = \mathrm{CH}_*(X_{T^\circ} \times_{B_{T^\circ}} Y_{T^\circ}),
\end{equation}
which puts together a family of correspondences $\FZ_\eta$ for $\eta \in T^\circ$:
\[
\FZ_\eta \in \Corr^*_{B_\eta}(X_\eta, Y_\eta) = \mathrm{CH}_*(X_\eta \times_{B_\eta} Y_\eta).
\]
Let $0 \in T$ be the special point. Then by the specialization map for Chow groups~via
\[
X_\eta \to B_\eta \quad \rightsquigarrow \quad X_0 \to B_0,
\]
we obtain a correspondence
\[
\FZ_0 \in \Corr^*_{B_\eta}(X_\eta, Y_\eta) = \mathrm{CH}_*(X_0 \times_{B_0} Y_0).
\]
Also recall from \cite[Lemma 2.21]{CH} that the cycle class $\mathrm{cl}(\FZ_\eta) \in H^{\mathrm{BM}}_{2*}(X_\eta \times_{B_\eta} Y_\eta, \BQ)$ induces a morphism in $D^\mathrm{b}_\mathrm{c}(B_\eta)$:
\begin{equation} \label{eq:clceta}
\mathrm{cl}(\FZ_\eta): Rf_{\eta*}\BQ_{X_\eta} \to Rg_{\eta*}\BQ_{Y_\eta}[*],
\end{equation}
where we ignore the precise shifting index. Similarly, we have a morphism in $D^\mathrm{b}_\mathrm{c}(B_0)$:
\begin{equation} \label{eq:clc0}
\mathrm{cl}(\FZ_0): Rf_{0*}\BQ_{X_0} \to Rg_{0*}\BQ_{Y_0}[*].
\end{equation}

\begin{prop} \label{prop4.1}
Let $\FZ_\eta, \FZ_0$ be as above. For $k \in \BZ$, if
\begin{equation} \label{eq:phketa}
{^\Fp \CH}^k(\mathrm{cl}(\FZ_\eta)) : {^\Fp \CH}^k(Rf_{\eta*}\BQ_{X_\eta}) \to {^\Fp \CH}^k(Rg_{\eta*}\BQ_{Y_\eta}[*])
\end{equation}
is an isomorphism (resp.~zero), then the same holds for
\begin{equation} \label{eq:phk0}
{^\Fp \CH}^k(\mathrm{cl}(\FZ_0)) : {^\Fp \CH}^k(Rf_{0*}\BQ_{X_0}) \to {^\Fp \CH}^k(Rg_{0*}\BQ_{Y_0}[*]).
\end{equation}
\end{prop}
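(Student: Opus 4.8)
The plan is to show that the Chow-theoretic specialization $\FZ_\eta\rightsquigarrow\FZ_0$ is intertwined, through the Corti--Hanamura realization, with the sheaf-theoretic \emph{nearby-cycle} functor for the family over $T$, and then to invoke $t$-exactness of nearby cycles for the perverse $t$-structure. Since the assertion is local on $T$ near $0$, I would first replace $T$ by a small disc (equivalently the spectrum of a henselian DVR) $\Delta$ with generic point $\eta$, and form the base-changed families $X_\Delta,Y_\Delta,B_\Delta$ and $W_\Delta:=X_\Delta\times_{B_\Delta}Y_\Delta$; write $j\colon(-)_\eta\hookrightarrow(-)_\Delta$ for the (open) generic fibre and $i\colon(-)_0\hookrightarrow(-)_\Delta$ for the special fibre, a principal Cartier divisor. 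By construction of the specialization map, the Zariski closure $\overline{\FZ}\in\mathrm{CH}_*(W_\Delta)$ of $\FZ_\eta$ satisfies $j^*\overline{\FZ}=\FZ_\eta$ and $i^!\overline{\FZ}=\FZ_0$, where $i^!$ is the refined Gysin pullback to $W_0$. Realizing $\overline{\FZ}$ over $B_\Delta$ gives a morphism $\mathrm{cl}(\overline{\FZ})\colon Rf_{\Delta*}\BQ_{X_\Delta}\to Rg_{\Delta*}\BQ_{Y_\Delta}[*]$ whose restriction along $j\colon B_\eta\hookrightarrow B_\Delta$ is $\mathrm{cl}(\FZ_\eta)$.

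Next I would identify $\mathrm{cl}(\FZ_0)$ with the nearby cycles of $\mathrm{cl}(\FZ_\eta)$. Since $\pi\circ f$ and $\pi\circ g$ are smooth, the vanishing cycles $\phi$ of the constant sheaves on $X_\Delta$ and $Y_\Delta$ vanish, and $\psi$ commutes with the proper pushforwards $Rf_*$, $Rg_*$; hence the canonical cospecialization maps
\[
i^*Rf_{\Delta*}\BQ_{X_\Delta}\xrightarrow{\ \simeq\ }\psi_{B_\Delta/\Delta}\bigl(Rf_{\eta*}\BQ_{X_\eta}\bigr),\qquad i^*Rg_{\Delta*}\BQ_{Y_\Delta}\xrightarrow{\ \simeq\ }\psi_{B_\Delta/\Delta}\bigl(Rg_{\eta*}\BQ_{Y_\eta}\bigr)
\]
are isomorphisms, both sides being canonically $Rf_{0*}\BQ_{X_0}$, respectively $Rg_{0*}\BQ_{Y_0}$, by proper base change. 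Naturality of cospecialization then identifies $\psi_{B_\Delta/\Delta}(\mathrm{cl}(\FZ_\eta))$ with $i^*\mathrm{cl}(\overline{\FZ})$; and I would identify the latter with $\mathrm{cl}(i^!\overline{\FZ})=\mathrm{cl}(\FZ_0)$ using the compatibility of the realization with Gysin pullback along the regular closed immersion $i\colon B_0\hookrightarrow B_\Delta$. This gives $\mathrm{cl}(\FZ_0)=\psi_{B_\Delta/\Delta}(\mathrm{cl}(\FZ_\eta))$ under the above identifications.

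To conclude, the nearby-cycle functor $\psi_{B_\Delta/\Delta}$ is exact for the perverse $t$-structures up to a fixed shift, so it commutes with the functors ${}^{\Fp}\CH^\bullet$ up to a reindexing that is the same on source and target; since any functor carries isomorphisms to isomorphisms and zero morphisms to zero morphisms, applying it to $\mathrm{cl}(\FZ_\eta)$ yields exactly the claim: if ${}^{\Fp}\CH^k(\mathrm{cl}(\FZ_\eta))$ is an isomorphism (resp.\ zero), then so is ${}^{\Fp}\CH^k(\mathrm{cl}(\FZ_0))$. I want to stress that neither semisimplicity nor the decomposition theorem is needed for this last step.

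The hard part will be the compatibility invoked in the second paragraph: that forming the closure over $\Delta$ and applying $i^!$ on algebraic cycles corresponds, under the Corti--Hanamura realization, to applying $i^*$ — equivalently, via the vanishing of $\phi$ on the smooth total families, to applying $\psi$ — on the associated morphisms of complexes. Making this precise requires unwinding the construction of the realization functor (cycle class maps, K\"unneth isomorphisms, relative Verdier duality) and checking that each ingredient commutes with Gysin pullback to the special fibre, with the dimension and shift bookkeeping carried out in the conventions of \cite{CH}. The remaining inputs — $t$-exactness of nearby cycles, their commutation with proper pushforward, and the vanishing of $\phi$ along smooth morphisms — are standard and should be quoted.
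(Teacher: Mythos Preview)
Your proposal is correct and follows essentially the same route as the paper: form the closure $\overline{\FZ}$ over the total base, realize it, use vanishing of $\phi$ (from smoothness over $T$) to identify $i^*\mathrm{cl}(\overline{\FZ})$ with $\psi\mathrm{cl}(\overline{\FZ})$, and then invoke perverse $t$-exactness of $\psi[-1]$. The paper dispatches the step you flag as the ``hard part'' --- that $\mathrm{cl}(\FZ_0)=i^*\mathrm{cl}(\overline{\FZ})$ --- in a single line (``by construction and proper base change''), so you may be overestimating the work needed there; it is a standard compatibility of cycle classes with specialization rather than something requiring a detailed unwinding of the Corti--Hanamura construction.
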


\begin{proof}
Let $\overline{\FZ} \in \mathrm{CH}_*(X \times_B Y)$ be any  extension of $\FZ \in \mathrm{CH}_*(X_{T^\circ} \times_{B_{T^\circ}} Y_{T^\circ})$ in \eqref{eq:defz}, \emph{e.g.}~by taking the Zariski closure. The cycle class $\mathrm{cl}(\overline{\FZ}) \in H^{\mathrm{BM}}_{2*}(X \times_B Y, \BQ)$ induces a morphism in~$D^\mathrm{b}_\mathrm{c}(B)$:
\begin{equation} \label{eq:clc}
\mathrm{cl}(\overline{\FZ}) : Rf_*\mathbb{Q}_X \to Rg_*\mathbb{Q}_Y[*].
\end{equation}
Then by construction and proper base change, the morphism $\mathrm{cl}(\FZ_0)$ in \eqref{eq:clc0} is obtained by applying~$i^*$ to \eqref{eq:clc} for the inclusion $i: B_0 \hookrightarrow B$:
\[
\mathrm{cl}(\FZ_0) = i^*\mathrm{cl}(\overline{\FZ}): Rf_{0*}\BQ_{X_0} \to Rg_{0*}\BQ_{Y_0}[*].
\]

We give an alternative construction of \eqref{eq:clc0} via nearby and vanishing cycles following \cite{dCMa, dC2}. Recall the distinguished triangle
\begin{equation} \label{eq:can}
i^*[-1] \to \psi_B[-1] \xrightarrow{\mathrm{can}} \phi_B \xrightarrow{+1}
\end{equation}
where $\psi_B, \phi_B : D^\mathrm{b}_\mathrm{c}(B) \to D^\mathrm{b}_\mathrm{c}(B_0)$ are the nearby and vanishing cycle functors respectively. Since $X$ is smooth over~$T$, we have $\phi_X\mathbb{Q}_X \simeq 0 \in D^\mathrm{b}_\mathrm{c}(X_0)$ and by proper base change for $\phi$,
\[
\phi_B Rf_*\mathbb{Q}_X \simeq Rf_{0*}\phi_X\mathbb{Q}_X \simeq 0 \in D^\mathrm{b}_\mathrm{c}(B_0).
\]
Similarly, we have
\[
\phi_B Rg_*\mathbb{Q}_Y \simeq Rg_{0*}\phi_Y\mathbb{Q}_Y \simeq 0 \in D^\mathrm{b}_\mathrm{c}(B_0).
\]
We now apply the distinguished triangle \eqref{eq:can} to the morphism $\mathrm{cl}(\overline{\FZ})$ in \eqref{eq:clc}, which yields a commutative diagram
\begin{equation} \label{eq:nophi}
\begin{tikzcd}
Rf_{0*}\BQ_{X_0} \arrow[r, "\mathrm{cl}(\FZ_0)"] \arrow[d, "\simeq"] & Rg_{0*}\BQ_{Y_0}[*] \arrow[d, "\simeq"] \\
\psi_BRf_*\BQ_X \arrow[r, "\psi_B\mathrm{cl}(\overline{\FZ})"] & \psi_BRg_*\BQ_Y[*].
\end{tikzcd}
\end{equation}
Here both columns are isomorphisms since the vanishing cycles are trivial. Moreover, since~$\psi_B$ is the \emph{nearby cycle} functor, the bottom morphism $\psi_B\mathrm{cl}(\overline{\FZ})$ is fully determined by the morphism~$\mathrm{cl}(\FZ_\eta)$ in~\eqref{eq:clceta}.

Finally, we recall the fact that $\psi_B[-1]$ is perverse $t$-exact, and hence commutes with taking perverse cohomology. Therefore, if \eqref{eq:phketa} is an isomorphism (resp.~zero), then
\[
{^\Fp \CH}^k(\psi_B\mathrm{cl}(\overline{\FZ})) : {^\Fp \CH}^k(\psi_B Rf_*\BQ_X) \to {^\Fp \CH}^k(\psi_B Rg_*\BQ_Y[*])
\]
is also an isomorphism (resp.~zero). By the diagram \eqref{eq:nophi} the same holds for \eqref{eq:phk0}.
\end{proof}

A useful consequence of Proposition \ref{prop4.1} is that the validity of the Corti--Hanamura motivic decomposition conjecture \cite{CH} is preserved under specialization. This also completes the proof of Theorem \ref{thm2.10}.

\begin{cor}\label{cor4.2}
Let $X \xrightarrow{f} B \xrightarrow{\pi} T$ be as above, with $f$ proper and $\pi, \pi \circ f$ smooth. If the motivic decomposition conjecture holds for $X_\eta \to B_\eta$, then the specializations of the projectors provide a motivic decomposition for $X_0 \to B_0$.
\end{cor}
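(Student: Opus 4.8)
The plan is to combine two ingredients: the multiplicativity of the specialization map on Chow groups of relative fibre products, and Proposition~\ref{prop4.1}, which transports the homological realization of a correspondence from the general fibre to the special fibre.

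First I would dispose of the algebraic half. Let $\mathfrak{r}_i^\eta \in \Corr^0_{B_\eta}(X_\eta, X_\eta)$, $0 \le i \le 2R_f$, be the orthogonal projectors provided by the motivic decomposition conjecture over $B_\eta$, so that $\sum_i \mathfrak{r}_i^\eta = [\Delta_{X_\eta/B_\eta}]$, $\mathfrak{r}_i^\eta \circ \mathfrak{r}_i^\eta = \mathfrak{r}_i^\eta$ and $\mathfrak{r}_i^\eta \circ \mathfrak{r}_j^\eta = 0$ for $i \ne j$. Spreading these cycles out over a Zariski neighbourhood of $0 \in T$ and applying the specialization map $\mathrm{CH}_*(X_\eta \times_{B_\eta} X_\eta) \to \mathrm{CH}_*(X_0 \times_{B_0} X_0)$ of Section~\ref{Sec4.1}, one obtains cycles $\mathfrak{r}_i^0 \in \Corr^0_{B_0}(X_0, X_0)$. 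Since composition of relative correspondences is assembled from flat pullback, proper pushforward and refined Gysin intersection --- all compatible with restriction to the fibre over $0$ --- the specialization map respects composition and sends the relative diagonal of $X/B$ to that of $X_0/B_0$. Hence $\sum_i \mathfrak{r}_i^0 = [\Delta_{X_0/B_0}]$, $\mathfrak{r}_i^0 \circ \mathfrak{r}_i^0 = \mathfrak{r}_i^0$ and $\mathfrak{r}_i^0 \circ \mathfrak{r}_j^0 = 0$ for $i \ne j$, so the $\mathfrak{r}_i^0$ are orthogonal projectors decomposing the relative diagonal.

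It remains to verify that the $\mathfrak{r}_i^0$ induce the sheaf-theoretic decomposition \eqref{DTHiggs} for $f_0: X_0 \to B_0$, which is the only non-formal point. By hypothesis $\mathrm{cl}(\mathfrak{r}_i^\eta)$ realizes the projection of $Rf_{\eta*}\BQ_{X_\eta}$ onto its $i$-th shifted perverse cohomology sheaf; equivalently ${^\mathfrak{p}}\CH^k(\mathrm{cl}(\mathfrak{r}_i^\eta))$ is an isomorphism for $k = i$ and zero for $k \ne i$. Applying Proposition~\ref{prop4.1} with $X = Y$, $f = g$ and $\FZ_\eta = \mathfrak{r}_i^\eta$ (so $\FZ_0 = \mathfrak{r}_i^0$), the same holds for $\mathrm{cl}(\mathfrak{r}_i^0)$. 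Since $\mathrm{cl}$ is a ring homomorphism, the $\mathrm{cl}(\mathfrak{r}_i^0)$ are mutually orthogonal idempotents of $Rf_{0*}\BQ_{X_0}$ summing to the identity, and they cut out a direct sum decomposition $Rf_{0*}\BQ_{X_0} = \bigoplus_i C_i$. Additivity of ${^\mathfrak{p}}\CH^k$ together with the vanishing ${^\mathfrak{p}}\CH^k(\mathrm{cl}(\mathfrak{r}_i^0)) = 0$ for $k \ne i$ gives ${^\mathfrak{p}}\CH^k(C_i) = 0$ for $k \ne i$ and ${^\mathfrak{p}}\CH^i(C_i) = {^\mathfrak{p}}\CH^i(Rf_{0*}\BQ_{X_0})$, whence $C_i \simeq {^\mathfrak{p}}\CH^i(Rf_{0*}\BQ_{X_0})[-i]$; this is exactly \eqref{DTHiggs}. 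Thus the $\mathfrak{r}_i^0$ form a motivic decomposition of $X_0 \to B_0$.

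Running the identical argument for the truncation-type projectors $\mathfrak{p}_k^\eta$ of Theorem~\ref{thm2.10} --- whose realization is the natural arrow ${^\mathfrak{p}}\tau_{\le k} \hookrightarrow \mathrm{id}$, i.e.\ ${^\mathfrak{p}}\CH^j(\mathrm{cl}(\mathfrak{p}_k^\eta))$ the identity for $j \le k$ and zero for $j > k$ --- shows the specializations $\mathfrak{p}_k^0$ realize the perverse truncation of $Rf_{0*}\BQ_{X_0}$, which is condition~(b) in the definition of a motivic perverse filtration; this settles the outstanding point in Theorem~\ref{thm2.10}. The one thing requiring genuine care is the multiplicativity of the specialization map for relative correspondences --- a standard but slightly technical compatibility between refined intersection and deformation to the normal cone --- while all the new content is packaged into Proposition~\ref{prop4.1}, so I do not expect any serious obstacle beyond this bookkeeping.
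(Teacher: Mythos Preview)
Your proposal is correct and follows essentially the same approach as the paper's proof: specialize the projectors using multiplicativity of the Chow-theoretic specialization map (so orthogonality relations and the diagonal are preserved), then invoke Proposition~\ref{prop4.1} to check that the homological realization of the specialized projectors recovers the decomposition theorem for $f_0: X_0 \to B_0$. The paper's proof is simply a terser version of what you wrote, and your added remark deducing the outstanding realization condition~(b) for Theorem~\ref{thm2.10} is exactly how the paper uses this corollary.
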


\begin{proof}
The (semi-)orthogonal projectors of $X_\eta \to B_\eta$ specialize immediately to (semi-)orthog\-onal projectors of $X_0 \to B_0$, while the relative diagonal class $[\Delta_{X_\eta/B_\eta}]$ specializes to $[\Delta_{X_0/B_0}]$. Now Proposition~\ref{prop4.1} guarantees that the homological realization of the specialized projectors recovers the decomposition theorem for $X_0 \to B_0$.
\end{proof}

\subsection{Correspondences}

For our purpose, we connect the decomposition theorem associated with the Hitchin maps:
\[
h_0: M^{\mathrm{par}}_0 \to W_0, \quad f: M_{n,d} \to B \subset W_0.
\]
The diagram (\ref{corr}) yields sheaf-theoretic correspondences 
\begin{equation}\label{Sheaf_corr}
[\widetilde{M}_0]: Rh_{0*}\BQ_{M^{\mathrm{par}}_0} \to  Rf_* \BQ_{M_{n,d}} \left[- 2\binom{n}{2} \right], \quad {^\mathfrak{t}}[\widetilde{M}_0]:  Rf_* \BQ_{M_{n,d}}\left[ -2\binom{n}{2} \right] \to Rh_{0*}\BQ_{M^{\mathrm{par}}_0} 
\end{equation}
which take places in the derived category $D^\mathrm{b}_\mathrm{c}(W_0)$.

The following lemma is a global version of Lemma \ref{lem3.1}(c).

\begin{lem}\label{lem4.1}
There is a natural Weyl group $\mathfrak{S}_n$-action on the object
\[
Rh_{0*}\BQ_{M^{\mathrm{par}}_0} \in D^\mathrm{b}_\mathrm{c}(W_0);
\]
the anti-invariant part is 
\[
\left(Rh_{0*}\BQ_{M^{\mathrm{par}}_0}\right)^{\mathrm{sgn}} = Rf_* \BQ_{M_{n,d}}\left[ -2\binom{n}{2} \right] \in D^\mathrm{b}_\mathrm{c}(W_0).
\]
Furthermore, under this isomorphism the correspondences \eqref{Sheaf_corr} are projecting to the anti-invariant part and the natural inclusion from the anti-invariant part respectively.
\end{lem}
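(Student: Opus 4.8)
The strategy is to obtain the global statement by ``spreading out'' the local Springer-theoretic statement of Lemma \ref{lem3.1}(c) (equivalently Remark \ref{rmk3.2}) along the evaluation maps, exactly as Proposition \ref{prop3.4} did for the correspondence-commutation statement. First I would recall the Cartesian diagram from Section \ref{Section3} expressing $M^{\mathrm{par}} \to M^{\mathrm{mero}}$ as the pullback of the Grothendieck--Springer resolution $q: \widetilde{\mathfrak{g}}/G \to \mathfrak{g}/G$ along $\mathrm{ev}_p$, and its specialization $M^{\mathrm{par}}_0 \xrightarrow{\mathrm{ev}_p} \widetilde{\CN}_0/G$ sitting over $\widetilde{\CN}_0/G \to \CN_0/G$. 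Since $\mathrm{ev}_p$ is smooth (Proposition \ref{prop3.2}), the $\mathfrak{S}_n$-action on $Rq_{0*}\BQ_{\widetilde{\CN}_0}$ described in Remark \ref{rmk3.2}, which comes from the generic $\mathfrak{S}_n$-cover $\widetilde{\CN}_\eta \to \CN_\eta$ and the specialization $\eta \rightsquigarrow 0$, pulls back to a natural $\mathfrak{S}_n$-action on $Rh_{0*}\BQ_{M^{\mathrm{par}}_0} \in D^b_c(W_0)$; here I use proper base change together with the fact that $M^{\mathrm{par}}_0 \to M^{\mathrm{mero}}$ and $\widetilde{\CN}_0/G \to \CN_0/G$ fit in a Cartesian square over $W_0 \to \CN_0/G$ (more precisely, the relevant base is obtained from $W_0$ by the evaluation map to $\CN_0/G$, and the Springer complex is pulled back along it). Equivalently, one may construct the action directly from the generic $\mathfrak{S}_n$-cover $M^{\mathrm{par}}_\eta \to \overline{J}^e_C$ of Section \ref{Sec1.2}(a) together with the specialization along $T$.

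Next, the anti-invariant summand. Working $G$-equivariantly and relatively over $\CN_0/G$, Remark \ref{rmk3.2} says that $[\mathrm{Fl}/G]$ and ${}^{\mathfrak{t}}[\mathrm{Fl}/G]$ realize the projection of $Rq_{0*}\BQ_{\widetilde{\CN}_0}$ onto its sign-isotypic part $\BQ_0[-2\binom{n}{2}]$ and the inclusion of that part, respectively, and in particular $(Rq_{0*}\BQ_{\widetilde{\CN}_0})^{\mathrm{sgn}} \simeq \BQ_{\CN_0/G}[-2\binom{n}{2}]$. Pulling this back along $\mathrm{ev}_p: M^{\mathrm{par}}_0 \times_{W_0} \cdots \to \widetilde{\CN}_0/G$ — using (\ref{corrr}) from the proof of Proposition \ref{prop3.4}, which identifies the pullback of the local correspondences with $[\widetilde{M}_0]$ and ${}^{\mathfrak{t}}[\widetilde{M}_0]$ — gives that $[\widetilde{M}_0]$ and ${}^{\mathfrak{t}}[\widetilde{M}_0]$ realize the projection onto and inclusion of the sign part of $Rh_{0*}\BQ_{M^{\mathrm{par}}_0}$, and that this sign part is the pullback along the smooth map $\mathrm{ev}_p: M_{n,d} \to \widetilde{\CN}_0/G$ (composed appropriately) of $\BQ[-2\binom{n}{2}]$, i.e. it is $Rf_*\BQ_{M_{n,d}}[-2\binom{n}{2}]$ by proper base change along the same Cartesian square that defines $\widetilde{M}_0$ in diagram (\ref{corr}) (recall $\widetilde{M}_0 \to M_{n,d}$ is the pullback of $\mathrm{Fl}/G \to \mathrm{pt}/G$, i.e. of $T^*\mathrm{Fl} \supset \mathrm{Fl} \to \mathrm{pt}$, along $\mathrm{ev}_p$, and the zero-section of $T^*\mathrm{Fl}$ has trivial cohomology complex $\BQ_{\mathrm{pt}}$). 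Combining, we get the displayed isomorphism and the description of the correspondences.

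\emph{Main obstacle.} The delicate point is making precise the sheaf-theoretic base change that identifies the sign-part with $Rf_*\BQ_{M_{n,d}}[-2\binom{n}{2}]$ as an object of $D^b_c(W_0)$, rather than merely fiberwise: one must check that the various evaluation maps $\mathrm{ev}_p$ on the moduli spaces, on $M^{\mathrm{par}}_0 \times_{W_0} M^{\mathrm{par}}_0$, and on $\widetilde{M}_0$ are all obtained by base change from the single smooth surjection (\ref{evaluation}), so that all the Springer-theoretic identities (commuting $\mathfrak{S}_n$-action, $[\mathrm{Fl}]\circ{}^{\mathfrak t}[\mathrm{Fl}]$, the computation of the sign-isotypic component) transport intact; this is exactly the mechanism already used in Proposition \ref{prop3.4}, so the work is to assemble those pieces carefully rather than to prove anything genuinely new. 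A secondary subtlety is bookkeeping the shift $2\binom{n}{2} = 2\dim\mathrm{Fl}$ and the (stack-theoretic) passage between $G$-equivariant objects on $\widetilde{\CN}_0/G$ and honest complexes on $W_0$, but this is routine given Remark \ref{rmk3.2}.
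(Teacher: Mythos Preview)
Your proposal is correct and follows essentially the same approach as the paper: the paper's proof consists of a single sentence observing that diagram (\ref{corr}) is the pullback of the local diagram (\ref{local_corr}) along the smooth morphisms $\mathrm{ev}_p$ (Proposition \ref{prop3.2}), so Lemma \ref{lem4.1} follows by pulling back the $G$-equivariant version of Lemma \ref{lem3.1}(c) as formulated in Remark \ref{rmk3.2}. Your write-up simply unpacks this pullback and base-change mechanism in more detail than the paper does.
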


\begin{proof}
    By Proposition \ref{prop3.2}, the diagram (\ref{corr}) is the pullback of the local diagram (\ref{local_corr}) along smooth morphisms $\mathrm{ev}_p$. Therefore, Lemma \ref{lem4.1} follow from pulling back the $G$-equivariant version of Lemma \ref{lem3.1}(c).
\end{proof}

\subsection{Proofs of Theorems \ref{thm1}, \ref{motivic_lefschetz}, \ref{thm2}, \ref{thm3}}

In this section, we complete the proofs of all the main theorems.

\subsubsection{Proof of Theorem \ref{thm1}}\label{sec4.3.1}

We start with the proof of Theorem \ref{thm1}. By Proposition \ref{prop2.1}, it suffices to show that the correspondences we constructed in (\ref{final_perv}) form a motivic perverse filtration. By Proposition \ref{prop3.5}, it remains to check that the homological realization
\[
\mathfrak{p}_k (Rf_* \BQ_{M_{n,d}}) \rightarrow Rf_* \BQ_{M_{n,d}}
\]
recovers the perverse truncation
 \[
    ^{\mathfrak{p}}\tau_{\leq k+ \dim {B}} Rf_*\BQ_{M_{n,d}} \rightarrow  Rf_*\BQ_{M_{n,d}}.
\]
We know that this statement holds for $h_0: M^{\mathrm{par}}_0 \to W_0$ and $\mathfrak{p}_k^{\mathrm{par}}$ by Theorem \ref{thm2.10} (proven in Section \ref{Sec4.1}). Now, we consider the correspondence of $\mathfrak{p}_k$ given by the composition:
\[
\mathfrak{p}_k: Rf_* \BQ_{M_{n,d}} \xrightarrow{[\widetilde{M}_0]} Rh_{0*} \BQ_{M_0^{\mathrm{par}}} \left[2\binom{n}{2} \right] \xrightarrow{\mathfrak{p}^{\mathrm{par}}_k}  Rh_{0*} \BQ_{M_0^{\mathrm{par}}} \left[2\binom{n}{2} \right]  \xrightarrow{{^\mathfrak{t}[\widetilde{M}_0]}} Rf_* \BQ_{M_{n,d}}.
\]
By Lemma \ref{lem4.1}, we have a direct sum decomposition induced by the $\mathfrak{S}_n$-action:
\begin{equation}\label{isotypic}
Rh_{0*} \BQ_{M_0^{\mathrm{par}}} \left[2\binom{n}{2} \right] = (Rh_{0*} \BQ_{M_0^{\mathrm{par}}})^{\mathrm{sgn}} \left[2\binom{n}{2} \right] \oplus \mathrm{others},
\end{equation}
and applying $\mathfrak{p}_k$ to $Rf_* \BQ_{M_{n,d}}$ is induced by applying $\mathfrak{p}_k^{\mathrm{par}}$ on the first factor above. In particular, it coincides with the perverse truncation. This completes the proof. \qed

\subsubsection{ Proof of Theorem \ref{motivic_lefschetz}}

We first note the following result of \cite[Proposition 6.4]{ACLS}.\label{sec4.3.2}

\medskip
{\noindent \bf Fact 1.} The compactified Jacobian fibration
\[
h_\eta: M^{\mathrm{par}}_\eta \to W_\eta
\]
of Section \ref{Sec1.2}(a) satisfies the relative Lefschetz standard conjecture. 

\medskip
More precisely, this is due to the resolutions of the Lefschetz standard conjecture for abelian varieties \cite{Lieb, Kun} and the fact that the decomposition theorem associated with a compactified Jacobian fibration associated with integral locally planar curves has \emph{full support} by the Ng\^{o} support theorem \cite{Ngo}.

Our strategy is to reduce the case of $f: M_{n,d} \to B$ to $h_\eta: M^{\mathrm{par}}_\eta \to W_\eta$. Let $\sigma \in H^2(M_{n,d}, \BQ)$ be a relative ample class. By \cite{Markman}, we can write $\sigma$ in terms of tautological classes as follows. We first recall that we have a canonical isomorphism
\begin{equation}\label{decom_1}
H^2(M_{n,d}, \BQ) = H^2(\widehat{M}_{n,d}, \BQ) \oplus H^2(J_\Sigma, \BQ)
\end{equation}
where $\widehat{M}_{n,d}$ is the $\mathrm{PGL}_n$-Hitchin moduli space and $J_\Sigma$ is the Jacobian variety of the curve $\Sigma$; see \cite[Section 2.4]{dCHM1}. The first summand on the right-hand side of (\ref{decom_1}) spans a $1$-dimensional subspace, generated by the relative ample tautological class given by the K\"unneth component of normalized second Chern character $\widetilde{\mathrm{ch}}_2(\CU)$ with respect to the point class $[\mathrm{pt}]\in H^2(\Sigma, \BQ)$ on the curve. This class can be lifted to the ($\BQ$-)divisor class
\[
c_2([\Sigma]): = \pi_{M*}\left(\pi_\Sigma^* [\Sigma] \cup \widetilde{\mathrm{ch}}_2(\CU) \right) \in \mathrm{Pic}(M_{n,d})_\BQ.
\]
Therefore, under the decomposition (\ref{decom_1}), we may present the relative ample class $\sigma$ as
\begin{equation}\label{taut_pre}
\sigma = c_2([\Sigma]) + \theta 
\end{equation}
where $\theta$ is induced by an ample divisor on $J_\Sigma$. The right-hand side is the pullback of a divisor on $M^{\mathrm{mero}}$ with the same tautological presentation (\ref{taut_pre}). As in Section \ref{Sec2_last}, pulling back this class yields relative ample classes on each variety $M^{\mathrm{par}}_\eta$ with $\eta \in T$.

In conclusion, we obtain the following fact.

\medskip

{\noindent \bf Fact 2.} There exists a divisor $\sigma_0^{\mathrm{par}} \in \mathrm{Pic}(M_0^{\mathrm{par}})_\BQ$ satisfying
\begin{equation} \label{eq:piiota}
\pi^* \sigma = \iota^* \sigma_0^{\mathrm{par}}
\end{equation}
where $\iota, \pi$ are defined in the diagram (\ref{corr}). Moreover, this divisor $\sigma_0^{\mathrm{par}}$ is obtained as the specialization of a relative ample divisor
\[
\sigma_\eta^{\mathrm{par}} \in \mathrm{Pic}(M^\mathrm{par}_\eta)_{\BQ}
\]
for general $\eta$.
\medskip

Note that in general the cup-product with respect to an algebraic cycle $E$ is induced by a correspondence given by the $\Delta_*E$, where $\Delta$ is the (relative) diagonal. Now we consider the correspondences
\[
\Delta_*{\sigma^i} \in \mathrm{Corr}^*_{B}(M_{n,d}, M_{n,d}), \quad \Delta_*{(\sigma^{\mathrm{par}}_0)^i} \in \mathrm{Corr}_{W_0}^*(M_0^{\mathrm{par}}, M_0^{\mathrm{par}}).
\]

\medskip

{\noindent \bf Fact 3.} We have
\[
\Gamma\left(\Delta_*{(\sigma^{\mathrm{par}}_0)^i} \right) = \varepsilon n!\cdot \Delta_*{\sigma^i} \in \mathrm{Corr}^*_B(M_{n,d}, M_{n,d}).
\]
In particular, after the homological realization we obtain the commutative diagram
\begin{equation*}
\begin{tikzcd}
Rf_* \BQ_{M_{n,d}} \arrow[r, "\varepsilon n!\cdot(\cup \sigma^i)"] \arrow[d, "{[\widetilde{M}_0]}"]
& Rf_* \BQ_{M_{n,d}}[2i] \\
Rh_{0*}\BQ_{M^{\mathrm{par}}_0}\left[2\binom{n}{2} \right] \arrow[r,"\cup (\sigma_0^{\mathrm{par}})^i"]
& Rh_{0*}\BQ_{M^{\mathrm{par}}_0}\left[2\binom{n}{2} +2i\right]\arrow[u, "{{^\mathfrak{t}}[\widetilde{M}_0]}"].
\end{tikzcd}
\end{equation*}

\medskip

\begin{proof}[Proof of Fact 3]
It suffices to show the following relation of correspondences
\begin{equation} \label{eq:commu}
{^\Ft[\widetilde{M}_0]} \circ \Delta_*(\sigma_0^{\mathrm{par}})^i = \Delta_*\sigma^i \circ {^\Ft[\widetilde{M}_0]} \in \Corr^*_{W_0}(M_0^{\mathrm{par}}, M_{n, d}).
\end{equation}
Then by Proposition \ref{prop3.5}(a), we have
\[
\Gamma(\Delta_*(\sigma_0^{\mathrm{par}})^i) = {^\Ft[\widetilde{M}_0]} \circ \Delta_*(\sigma_0^{\mathrm{par}})^i \circ [\widetilde{M}_0] = \Delta_*\sigma^i \circ {^\Ft[\widetilde{M}_0]} \circ [\widetilde{M}_0] = \varepsilon n! \cdot \Delta_*\sigma^i,
\]
which proves Fact 3. To see the relation \eqref{eq:commu}, we expand the left-hand side
\begin{align*}
{^\Ft[\widetilde{M}_0]} \circ \Delta_*(\sigma_0^{\mathrm{par}})^i & = p_{13*}\delta^!\left(\Delta_*(\sigma_0^{\mathrm{par}})^i \times {^\Ft[\widetilde{M}_0]}\right) \\
& = p_{13*}\delta^!\left(\left(q_2^*(\sigma_0^\mathrm{par})^i \cap [\Delta_{M_0^{\mathrm{par}}/W_0}]\right) \times {^\Ft[\widetilde{M}_0]}\right) \\
& = p_{13*}\delta^!\left([\Delta_{M_0^{\mathrm{par}}/W_0}] \times \left(r_1^*(\sigma_0^\mathrm{par})^i \cap {^\Ft[\widetilde{M}_0]}\right)\right) \\
& = r_1^*(\sigma_0^\mathrm{par})^i \cap {^\Ft[\widetilde{M}_0]} = {^\Ft(\iota^*(\sigma_0^\mathrm{par})^i \cap [\widetilde{M}_0])}.
\end{align*}
Here $\delta^!$ is the refined Gysin pullback with respect to the regular (absolute diagonal) embedding~$\Delta: M_0^{\mathrm{par}} \to M_0^{\mathrm{par}} \times M_0^{\mathrm{par}}$, and $p_{13}: M_0^{\mathrm{par}} \times_{W_0} M_0^{\mathrm{par}} \times_{W_0} M_{n, d} \to M_0^{\mathrm{par}} \times_{W_0} M_{n, d}$, $q_2: M_0^{\mathrm{par}} \times_{W_0} M_0^{\mathrm{par}} \to M_0^{\mathrm{par}}$, and $r_1: M_0^{\mathrm{par}} \times_{W_0} M_{n, d} \to M_0^{\mathrm{par}}$ are the natural projections. Similarly, we compute the right-hand side of \eqref{eq:commu}:
\begin{align*}
\Delta_*\sigma^i \circ {^\Ft[\widetilde{M}_0]} & = p_{13*}\delta^!\left({^\Ft[\widetilde{M}_0]} \times \Delta_*\sigma^i\right) \\
& = p_{13*}\delta^!\left({^\Ft[\widetilde{M}_0]} \times \left(r_1^*\sigma^i \cap [\Delta_{M_{n, d}/B}]\right)\right) \\
& = p_{13*}\delta^!\left(\left(q_2^*\sigma^i \cap {^\Ft[\widetilde{M}_0]}\right) \times [\Delta_{M_{n, d}/B}]\right) \\
& = q_2^*\sigma^i \cap {^\Ft[\widetilde{M}_0]} = {^\Ft(\pi^*\sigma^i \cap [\widetilde{M}_0])}.
\end{align*}
This time $\delta^!$ is the refined Gysin pullback with respect to $\Delta: M_{n, d} \to M_{n, d} \times M_{n, d}$, and $p_{13}, q_2, r_1$ are the corresponding natural projections. Hence \eqref{eq:commu} follows from \eqref{eq:piiota}.
\end{proof}

Now we construct the cycle $\mathfrak{Z}_{\sigma,i}$ for the relative ample class $\sigma$ on $M_{n,d}$. We start with the correspondence as in Fact 1 for $M^{\mathrm{par}}_\eta$: 
\begin{equation} \label{zpareta}
\mathfrak{Z}_{\sigma^{\mathrm{par}}_{\eta},i} \in \mathrm{Corr}_{W_\eta}^*( M^{\mathrm{par}}_\eta, M^{\mathrm{par}}_\eta)
\end{equation}
which gives the inverse of the Lefschetz symmetry induced by $(\sigma^{\mathrm{par}}_{\eta})^i$ for general $\eta$.

By Proposition \ref{prop4.1}, its specialization
\begin{equation}\label{zpar}
\mathfrak{Z}_{\sigma^{\mathrm{par}}_{0},i} \in \mathrm{Corr}_{W_0}^*( M^{\mathrm{par}}_0, M^{\mathrm{par}}_0)
\end{equation}
gives the inverse of the Lefschetz symmetry induced by $(\sigma^{\mathrm{par}}_{0})^i$, which respects the decomposition given by the $\mathfrak{S}_n$-structure since \eqref{zpareta} is pulled back from a self-correspondence of~$\overline{J}^e_{C_{\zeta}}$ relative over $B_\zeta^{\mathrm{par}}$. In particular, the correspondence (\ref{zpar}) induces an isomorphism on the perverse cohomology sheaves of the first summand of (\ref{isotypic}).

Finally, we set
\[
\mathfrak{Z}_{\sigma,i}: = \Gamma \left( \mathfrak{Z}_{\sigma^{\mathrm{par}}_{0},i}\right) \in \mathrm{Corr}_{B}^*(M_{n,d}, M_{n,d}).
\]
As in the proof of Theorem \ref{thm1} in Section \ref{sec4.3.1}, the diagram of Fact 3 together with Lemma~\ref{lem4.1} implies that the sheaf-theoretic operator $\cup \sigma^{i}$ coincides with the restriction of the operator~$\cup (\sigma^{\mathrm{par}}_0)^i$ to the anti-invariant component with respect to the $\mathfrak{S}_n$-equivariant structure. Since the inverse operator (\ref{zpar}) of $\cup (\sigma^{\mathrm{par}}_0)^i$ also respects the decomposition (\ref{isotypic}), we conclude that the homological realization of $\mathfrak{Z}_{\sigma, i}$  gives the inverse of  $\cup \sigma^{i}$ as desired.
\qed


\subsubsection{Proof of Theorem \ref{thm2}}\label{sec4.3.3}
We consider the diagram 
\begin{equation}\label{corr_Sigma}
    \begin{tikzcd}
\Sigma \times \widetilde{M}_0 \arrow[r, "\iota"] \arrow[d, "\pi"]
&\Sigma \times M_0^{\mathrm{par}} \\
\Sigma\times M_{n,d} 
\end{tikzcd}
\end{equation}
given by the base change of (\ref{corr}). For convenience, we use the same notation for this diagram as for (\ref{corr}); \emph{e.g.}~we still use $[\widetilde{M}_0]$ to denote the correspondence from $\Sigma \times M_{n,d}$ to $\Sigma \times M_0^{\mathrm{par}}$.

By the construction of the normalized Chern character, we have
\[
\pi^*\widetilde{\mathrm{ch}}(\CU) = \iota^* \widetilde{\mathrm{ch}}(\CU).
\]
Here the normalized Chern character on the left-hand side lies over $\Sigma \times M_{n,d}$ and the normalized Chern character on the right-hand side lies over $\Sigma \times M_{0}^{\mathrm{par}}$; their match after pulling back to~$\Sigma \times \widetilde{M}_0$ follows from the fact that both normalized Chern characters are pulled back from~$\Sigma \times M^{\mathrm{mero}}$. By the same argument as for Fact 3 in Section \ref{sec4.3.2}, we have
\begin{equation}\label{match1}
\Gamma\left(\Delta_* \widetilde{\mathrm{ch}}_k(\CU)\right) = \varepsilon n!\cdot \Delta_* \widetilde{\mathrm{ch}}_k(\CU)\in  \mathrm{Corr}^*_{\Sigma\times B}(\Sigma \times M_{n,d}, \Sigma \times M_{n,d})
\end{equation}
where the two sides concern the normalized Chern characters on $\Sigma\times M^{\mathrm{par}}_0$ and $\Sigma \times M_{n,d}$ respectively. Recall the projectors $\mathfrak{p}_k$ from (\ref{corr}) and we use the same notation to denote the pullback of these projectors to the varieties in (\ref{corr_Sigma}). We define
\[
\mathfrak{q}_{k+1} := \Delta_{\Sigma\times M_{n,d}/\Sigma\times B} - \mathfrak{p}_k = \frac{\varepsilon}{n!}\cdot \Gamma\left(\Delta_{\Sigma\times M_0^{\mathrm{par}}/\Sigma\times B} - \mathfrak{p}^{\mathrm{par}}_{k-\binom{n}{2}}\right),
\]
where the second identity is a consequence of Proposition \ref{prop3.5}(a). 

For our purpose, we want to prove the relation:
\[
\mathfrak{q}_{i + k+1} \circ \Delta_* \widetilde{\mathrm{ch}}_k(\CU) \circ \mathfrak{p}_i = 0;
\]
using (\ref{match1}) this can be rewritten as
\begin{equation}\label{relation_100}
\Gamma\left(\mathfrak{q}^{\mathrm{par}}_{i + k+1-\binom{n}{2}}\right) \circ \Gamma\left(\Delta_* \widetilde{\mathrm{ch}}_k(\CU)\right)\circ \Gamma\left( \mathfrak{p}^{\mathrm{par}}_{i-\binom{n}{2}} \right) = 0.
\end{equation}
Since all the correspondences
\[
\mathfrak{q}^{\mathrm{par}}_{i + k+1-\binom{n}{2}}, \quad \Delta_* \widetilde{\mathrm{ch}}_k(\CU), \quad \mathfrak{p}^{\mathrm{par}}_{i-\binom{n}{2}} \in \mathrm{Corr}^*_{\Sigma\times W_0}(\Sigma\times M_0^{\mathrm{par}}, \Sigma \times M_0^{\mathrm{par}})
\]
are specializations of cycles pulled back from correspondences for $\overline{J}^e_{C_\zeta}$ relative over $B_\zeta^{\mathrm{par}}$, by Proposition~\ref{prop3.4} the relation (\ref{relation_100}) follows from the relation
\[
\mathfrak{q}^{\mathrm{par}}_{i + k+1-\binom{n}{2}} \circ \Delta_* \widetilde{\mathrm{ch}}_k(\CU)\circ \mathfrak{p}^{\mathrm{par}}_{i-\binom{n}{2}} =0 \in \mathrm{Corr}^*_{\Sigma\times W_0}(\Sigma\times M_0^{\mathrm{par}}, \Sigma \times M_0^{\mathrm{par}}).
\]
This is given by Theorem \ref{thm2.11}. The proof is completed. \qed

\subsubsection{Proof of Theorem \ref{thm3}}

We denote by
\[
\mathfrak{C} \in \mathrm{Corr}_{W_0}^0(M^{\mathrm{par}}_0, {M^{\mathrm{par}}_{0}}'), \quad \mathfrak{C}' \in \mathrm{Corr}_{W_0}^0({M_{0}^{\mathrm{par}}}', M_{0}^{\mathrm{par}})
\]
the correspondences constructed in Theorem \ref{thm2.12} inducing an isomorphism
\[
h(M_0^{\mathrm{par}}) \simeq h({M_0^{\mathrm{par}}}').
\]
Then an identical argument as above (using Proposition \ref{prop3.4}) implies that
\[
\overline{\mathfrak{C}}:= \frac{\varepsilon}{n!} \cdot \mathfrak{C} \in \mathrm{Corr}_B^0(M_{n,d}, M_{n,d'}), \quad \overline{\mathfrak{C}}':= \frac{\varepsilon}{n!} \cdot \mathfrak{C}' \in \mathrm{Corr}_B^{0}(M_{n,d'}, M_{n,d})
\]
induces an isomorphism as desired.  \qed

\subsubsection{Remarks on the twisted case} \label{sec4.3.5}

As in Remark \ref{remark1.1}, one can also consider the $\mathfrak{D}$-twisted case with $\mathfrak{D} >0$. Here we discuss briefly the possible strategies to prove the main theorems in the~$\mathfrak{D}$-twisted case.

The decomposition theorem associated with $f: M^{\mathfrak{D}}_{n,d} \to B^{\mathfrak{D}}$ has full support; see Remark~\ref{remark1.1} and \cite{CL}. In this case, Theorem \ref{motivic_lefschetz} follows directly from \cite[Proposition 6.4]{ACLS} as Fact 1 above, and the argument of Section \ref{sec4.3.3} is not needed.

On the other hand, due to the failure of the $\mathfrak{D}$-twisted version of Lemma \ref{lem1.20}, for general~$\eta \in T$ we can only guarantee that all the spectral curves are reduced, but there may exist non-integral ones. Therefore, the extension of Arinkin's Fourier--Mukai transform over reduced locally planar curves \cite{MRV1, MRV2} may be needed to construct the projectors for the motivic decomposition.

\section{Questions and conjectures}

In this section, we discuss some conjectures and open questions concerning algebraic cycles for the Hitchin system.

\subsection{Multiplicativity}

A central question raised by de Cataldo, Hausel, and Migliorini in accompany with their discovery of the P=W conjecture (see \cite[Introduction]{dCHM1}) is the \emph{multiplicativity} of the perverse filtration:
\begin{equation}\label{quest1}
\cup: P_kH^*(M_{n,d}, \BQ) \times P_{l}H^*(M_{n,d}, \BQ) \rightarrow P_{k+l} H^*(M_{n,d}, \BQ).
\end{equation}
This phenomenon now is a consequence of the resolutions of the P=W conjecture, whose proofs rely heavily on the tautological generation result of Markman \cite{Markman}. 

In general, the multiplicativity holds for Leray filtrations associated with proper morphisms, but fails for the perverse filtration. The main reason is that the (sheaf-theoretic) cup-product does not interact nicely with the perverse truncation functor in general.

For the Hitchin system, we expect that the sheaf-theoretic enhancement of (\ref{quest1}) holds.

\begin{conj}\label{conj1}
The perverse truncation functor interacts with cup-product as follows
\[
\cup: {^\mathfrak{p}}\tau_{\leq k} Rf_* \BQ_{M_{n,d}} \times {^\mathfrak{p}}\tau_{\leq l} Rf_* \BQ_{M_{n,d}} \to {^\mathfrak{p}}\tau_{\leq k+l- (\dim M_{n, d} - d_f)} Rf_* \BQ_{M_{n,d}}.
\]
\end{conj}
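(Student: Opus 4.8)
\textbf{Proof proposal for Conjecture \ref{conj1}.}

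The plan is to run exactly the same reduction that worked for the motivic statements in Section \ref{sec4.3.3}, but now at the level of the derived category $D^b_c(W_0)$ rather than relative Chow motives. First I would reduce the Hitchin system $f\colon M_{n,d}\to B$ to the parabolic Hitchin system $h_0\colon M^{\mathrm{par}}_0\to W_0$ via the correspondence $[\widetilde{M}_0]$. By Lemma \ref{lem4.1}, the complex $Rf_*\BQ_{M_{n,d}}[-2\binom{n}{2}]$ is the $\mathfrak{S}_n$-anti-invariant summand of $Rh_{0*}\BQ_{M^{\mathrm{par}}_0}$, and the correspondences $[\widetilde{M}_0]$, ${^\mathfrak{t}}[\widetilde{M}_0]$ realize the projection onto and inclusion of this summand. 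Since $Rh_{0*}\BQ_{M^{\mathrm{par}}_0}$ carries an $\mathfrak{S}_n$-action and the cup-product is $\mathfrak{S}_n$-equivariant, the sheaf-theoretic cup-product on $Rf_*\BQ_{M_{n,d}}$ is the restriction to the sign-isotypic component of the cup-product on $Rh_{0*}\BQ_{M^{\mathrm{par}}_0}$; moreover the perverse truncation of the sign-component is the sign-component of the perverse truncation, since $\psi$-exactness and semisimplicity make the isotypic decomposition strict for the perverse $t$-structure. Hence it suffices to prove the analogous statement for $h_0\colon M^{\mathrm{par}}_0\to W_0$.

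Next I would reduce $h_0$ to the general-residue parabolic family $h_\eta\colon M^{\mathrm{par}}_\eta\to W_\eta$ by a specialization argument, analogous to Proposition \ref{prop4.1} but for the trilinear cup-product morphism. Concretely, one would spread out the relevant cup-product morphism over the one-parameter family and use that $\psi_B[-1]$ is perverse $t$-exact together with the vanishing of vanishing cycles (since the total spaces are smooth over $T$), so that the perverse-truncation behaviour of the cup-product on the special fiber is inherited from the general fiber. Finally, over $W_\eta$ we have $M^{\mathrm{par}}_\eta = \overline{J}^e_C$ (after $\mathfrak{S}_n$-quotient), and here the desired statement is exactly the sheaf-theoretic shadow of the motivic multiplicativity Theorem \ref{thm2.3}(b): the motivic relation $\mathfrak{q}_{k+l+1}\circ[\Delta^{\mathrm{sm}}]\circ(\mathfrak{p}_k\times\mathfrak{p}_l)=0$ realizes, under the Corti--Hanamura functor, to the statement that cup-product sends ${^\mathfrak{p}}\tau_{\leq k}\times{^\mathfrak{p}}\tau_{\leq l}$ into ${^\mathfrak{p}}\tau_{\leq k+l-(\dim \overline{J}^e_C - g)}$, because the homological realizations of $\mathfrak{p}_k,\mathfrak{p}_l,\mathfrak{q}_{k+l+1}$ are the perverse truncation/co-truncation functors by Theorem \ref{thm2.2}. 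Tracking the shift $\dim M_0^{\mathrm{par}}-\dim W_0 = \dim\overline{J}^e_C - g$ and using $(\dim M^{\mathrm{par}}_0-\dim W_0)-\binom{n}{2}=R_f$ then yields the stated bound for $M_{n,d}$.

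The main obstacle I expect is the second step: making precise the claim that the sheaf-theoretic cup-product morphism specializes compatibly under nearby cycles. The trilinear correspondence $[\Delta^{\mathrm{sm}}]$ lives on a triple fiber product and one must check that its cycle class, spread out over $T$ and restricted to the special fiber, agrees with the honest cup-product on $M^{\mathrm{par}}_0$ — this is plausible by proper base change but requires care with the refined Gysin maps, exactly as in the proof of Proposition \ref{prop4.1}. A secondary subtlety is that the conjecture as stated is about the \emph{sheaf-theoretic} perverse truncation functor, not merely about global cohomology or Chow groups; one must therefore genuinely work with the triangulated structure of $D^b_c(W_0)$ and the $t$-exactness of $\psi[-1]$ rather than passing to cohomology. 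Modulo these points, the argument is essentially a transcription of Sections \ref{sec4.3.3} and \ref{Sec4.1} with ``relative Chow motive'' replaced by ``object of $D^b_c$'' and ``motivic projector'' replaced by ``perverse truncation functor''; I would not expect any genuinely new geometric input beyond what is already developed in the paper.
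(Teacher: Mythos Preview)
The statement you are attempting to prove is listed in the paper as an \emph{open conjecture} (Conjecture \ref{conj1}); the paper does not prove it, and in fact explicitly identifies the obstruction to the strategy you propose. The gap is in your first step, the reduction from $f\colon M_{n,d}\to B$ to $h_0\colon M_0^{\mathrm{par}}\to W_0$.

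Your claim that ``the sheaf-theoretic cup-product on $Rf_*\BQ_{M_{n,d}}$ is the restriction to the sign-isotypic component of the cup-product on $Rh_{0*}\BQ_{M_0^{\mathrm{par}}}$'' is not justified and is almost certainly false as stated. First, the $\mathfrak{S}_n$-action of Lemma \ref{lem4.1} is the Springer action; it does not arise from a geometric $\mathfrak{S}_n$-action on $M_0^{\mathrm{par}}$, so there is no reason for the small diagonal, and hence the cup-product, to be $\mathfrak{S}_n$-equivariant. Second, even granting equivariance, $\mathrm{sgn}\otimes\mathrm{sgn}$ is the trivial representation, so the product of two sign-isotypic classes would land in the invariant part, not the sign part. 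Third, the correspondence $[\widetilde{M}_0]=\iota_*\pi^*$ is not a ring map: $\pi^*$ is multiplicative but the Gysin pushforward $\iota_*$ is not, so $[\widetilde{M}_0]$ does not intertwine the two cup-products. What made the argument in Section \ref{sec4.3.3} work for Theorem \ref{thm2} was the very special identity $\pi^*\widetilde{\mathrm{ch}}_k(\CU)=\iota^*\widetilde{\mathrm{ch}}_k(\CU)$, both sides being pulled back from $M^{\mathrm{mero}}$; this gave Fact 3, $\Gamma(\Delta_*\widetilde{\mathrm{ch}}_k(\CU))=\epsilon n!\cdot\Delta_*\widetilde{\mathrm{ch}}_k(\CU)$, for that particular class. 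There is no analogue of this for the full small diagonal $[\Delta^{\mathrm{sm}}_{M_{n,d}/B}]$, which is exactly what the paper says after Conjecture \ref{conj2}: ``the main difficulty\ldots is to understand the small diagonal cycle for $M_{n,d}$ using parabolic moduli spaces.'' Your steps 2 and 3 (specialization via a ternary analogue of Proposition \ref{prop4.1}, and the input from Theorem \ref{thm2.3}(b)) are reasonable, but without a correct version of step 1 the argument does not go through.
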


By the proof of Theorem \ref{thm1}, the perverse filtration and its sheaf-theoretic enhancement are governed by the algebraic cycles $\mathfrak{p}_k, \mathfrak{q}_k$. We propose the following conjecture which further strengthens Conjecture \ref{conj1}; this also provides an explanation of the multiplicativity phenomenon in terms of \emph{tautological relations} concerning $\mathfrak{p}_k, \mathfrak{q}_k$.

\begin{conj}\label{conj2}
The motivic perverse filtration (\ref{Motivic_P}) is multiplicative with respect to cup-product: 
\[
\cup: P_k h(M_{n,d}) \times P_lh(M_{n,d}) \to P_{k+l}h(M_{n,d}),
\]
\emph{i.e.}, the following relations hold
\[
\Fq_{k+l+1} \circ [\Delta^{\mathrm{sm}}_{M_{n, d}/B}]\circ (\Fp_k \times \Fp_l) =0 \in \mathrm{CH}_*(M_{n,d} \times_B M_{n,d} \times_B M_{n,d} ).
\]
Here $\Delta^{\mathrm{sm}}_{M_{n, d}/B}$ is the relative small diagonal in the relative triple product.
\end{conj}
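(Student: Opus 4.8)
The plan is to follow the same route used for Theorems \ref{thm1}, \ref{motivic_lefschetz}, \ref{thm2}, \ref{thm3}: reduce the assertion for $f\colon M_{n,d}\to B$ to the parabolic Hitchin system $h_0\colon M_0^{\mathrm{par}}\to W_0$ via the correspondence $\Gamma$ and Springer theory, after first establishing the parabolic statement from the compactified Jacobian case. \emph{Step 1 (parabolic multiplicativity).} One first proves the analogue of Conjecture \ref{conj2} for $h_0\colon M_0^{\mathrm{par}}\to W_0$, namely
\[
\mathfrak{q}^{\mathrm{par}}_{k+l+1}\circ[\Delta^{\mathrm{sm}}_{M_0^{\mathrm{par}}/W_0}]\circ(\mathfrak{p}^{\mathrm{par}}_k\times\mathfrak{p}^{\mathrm{par}}_l)=0 .
\]
By Theorem \ref{thm2.3}(b) this relation holds for the compactified Jacobian fibration $\overline{J}^e_C\to\widetilde{W}_\eta$ with $\eta\in T^\circ$ general (choosing $e_1,e_2$ so that $e_1+e_2$ is coprime to $n$, exactly as in Section \ref{Section2.2}). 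One then pushes the relation forward along the finite $\mathfrak{S}_n$-quotient $\widetilde{W}_\eta\to W_\eta$ — relative small diagonals push forward to relative small diagonals under base change along a finite map, and composition of binary/ternary relative correspondences is compatible with this pushforward, just as in the proof of Theorem \ref{thm2.7} — to obtain parabolic multiplicativity for $h_\eta\colon M^{\mathrm{par}}_\eta\to W_\eta$, hence for the family $h^\circ\colon M^{\mathrm{par},\circ}\to W^\circ$. Finally, specializing $\eta\rightsquigarrow 0\in T$ (the relation is one of algebraic cycles, so the specialization arguments of Theorems \ref{thm2.10}, \ref{thm2.11}, \ref{thm2.12} apply verbatim) yields the parabolic statement.

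\emph{Step 2 (transfer via $\Gamma$), and the main obstacle.} Here the argument of Fact 3 and Proposition \ref{prop3.5}(a) does \emph{not} go through, and I expect this to be the hard part. For divisor classes one had $\pi^*\sigma=\iota^*\sigma_0^{\mathrm{par}}$, and for Chern classes $\pi^*\widetilde{\mathrm{ch}}_k(\CU)=\iota^*\widetilde{\mathrm{ch}}_k(\CU)$, which is what made $\Gamma$ send $\Delta_*\sigma^i$ (resp.\ $\Delta_*\widetilde{\mathrm{ch}}_k(\CU)$) to $\epsilon n!$ times the corresponding class on $M_{n,d}$. The relative small diagonal has no such compatibility: since $[\widetilde{M}_0]=\iota_*\pi^*$ is a Gysin pushforward and not a ring homomorphism, one computes
\[
[\widetilde{M}_0](\alpha_1)\cup_{M_0^{\mathrm{par}}}[\widetilde{M}_0](\alpha_2)=\iota_*\!\left(e\big(N_{\widetilde{M}_0/M_0^{\mathrm{par}}}\big)\cdot\pi^*(\alpha_1\cup_{M_{n,d}}\alpha_2)\right),
\]
which differs from $[\widetilde{M}_0](\alpha_1\cup_{M_{n,d}}\alpha_2)=\iota_*\pi^*(\alpha_1\cup_{M_{n,d}}\alpha_2)$ by the Euler class of the normal bundle of $\widetilde{M}_0$ in $M_0^{\mathrm{par}}$, equivalently of the relative cotangent bundle of the flag fibration $\pi$. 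This discrepancy is not a constant along $B$, so $\Gamma$ does not take $[\Delta^{\mathrm{sm}}_{M_0^{\mathrm{par}}/W_0}]$ to a scalar multiple of $[\Delta^{\mathrm{sm}}_{M_{n,d}/B}]$, and the Springer reduction of Proposition \ref{prop3.4} — which handles self-correspondences pulled back from $\overline{J}^e_C$, and more generally operations that act by scalars on the $\mathrm{sgn}$-isotypic part — fails for the cup product. Equivalently: under the identification $h(M_{n,d})\simeq (Rh_{0*}\BQ_{M_0^{\mathrm{par}}})^{\mathrm{sgn}}$ of Lemma \ref{lem4.1}, the $\mathrm{sgn}$-isotypic part is \emph{not} closed under cup product, since $\mathrm{sgn}\otimes\mathrm{sgn}$ is the trivial representation; so the $\mathfrak{S}_n$-equivariant multiplicativity on $M_0^{\mathrm{par}}$ does not descend. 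This is precisely why Conjecture \ref{conj2} is open.

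\emph{Possible ways around it.} I see three candidate strategies. (i) A corrected Springer computation: track the Euler-class discrepancy above through $\Gamma$ and show the correction term lies in a sufficiently low piece of the motivic perverse filtration — the relevant Euler classes are flag-fiber classes, hence of controlled perversity relative to $\pi$ — so that the net operation still respects the filtration. (ii) A motivic enhancement of Markman's tautological generation theorem \cite{Markman}: if the cup-product algebra object $h(M_{n,d})$ in $\mathrm{CHM}(B)$ is generated by the sub-motives $\widetilde{\mathrm{ch}}_k(\CU)\big(h(\Sigma)\big)$, then Conjecture \ref{conj2} would follow from Theorem \ref{thm2} exactly as sheaf-theoretic multiplicativity follows from Markman's theorem together with \cite{MS_PW}; here the obstacle is the motivic generation statement, which seems out of reach of current techniques. (iii) Over the elliptic locus $B^{\mathrm{ell}}$ the conjecture is immediate from Theorem \ref{thm2.3}(b) for the compactified Jacobian fibration, so the content is entirely the extension over $B\setminus B^{\mathrm{ell}}$ — but, as with the Fourier transform, no suitable degeneration family relating the two is available, so this approach runs into the same fundamental difficulty highlighted in the introduction.
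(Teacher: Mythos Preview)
The statement is Conjecture~\ref{conj2}, which the paper presents as an \emph{open conjecture}, not as a theorem with a proof. There is therefore no proof in the paper to compare against. Your write-up is not a proof but a (correct) diagnosis of why the paper's methods do not settle the conjecture, and it aligns with the paper's own assessment.

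Concretely: your Step~1 is established. The paper notes immediately after stating the conjecture that, by Theorem~\ref{thm2.3}(b), the analogue holds for the parabolic Hitchin system $h_\eta\colon M^{\mathrm{par}}_\eta\to W_\eta$ with $\eta$ general (and over the elliptic locus $B^{\mathrm{ell}}$); the specialization argument you sketch then gives the $h_0$ statement, exactly as in Theorems~\ref{thm2.10}--\ref{thm2.12}. Your Step~2 identifies the genuine obstruction: $\Gamma$ does not carry the relative small diagonal of $M_0^{\mathrm{par}}$ to a scalar multiple of that of $M_{n,d}$, and under the identification of Lemma~\ref{lem4.1} the $\mathrm{sgn}$-isotypic summand is not closed under cup-product since $\mathrm{sgn}\otimes\mathrm{sgn}$ is trivial. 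This is precisely what the paper flags when it writes that ``the main difficulty in approaching Conjecture~\ref{conj2} is to understand the small diagonal cycle for $M_{n,d}$ using parabolic moduli spaces.'' Your three candidate strategies are reasonable speculation, but none is carried out in the paper, and the conjecture remains open there.
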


By Theorem \ref{thm2.3}(b), Conjecture \ref{conj2} holds for the parabolic Hitchin system $h_\eta: M^{\mathrm{par}}_\eta \to W_\eta$ for general $\eta$ or the elliptic part of the Hitchin system $f^{\mathrm{ell}}: M^{\mathrm{ell}}_{n,d} \to B^{\mathrm{ell}}$. From the perspective of this paper, the main difficulty in approaching Conjecture \ref{conj2} is to understand the small diagonal cycle for $M_{n,d}$ using parabolic moduli spaces.

Conjectures \ref{conj1} and \ref{conj2}, which concern multiplicative filtrations at the sheaf-theoretic and the motivic levels respectively, are further expected to be refined into multiplicative decompositions; see Remark \ref{rmk0}(b).

\subsection{Refined motivic decomposition}

Support theorems yield rich structures for the decomposition theorem associated with the Hitchin system. For example, by studying the decomposition theorem over the locus $B^{\mathrm{red}} \subset B$ of the Hitchin base formed by reduced spectral curves, de Cataldo, Heinloth, and Migliorini showed in \cite{dCHeM} that every partition of $n$ contributes a support of $Rf_*\BQ_{M_{n,d}}$. Therefore, following the philosophy of Corti--Hanamura \cite{CH}, we expect to have the motives which lift the contribution of the support indexed by each partition.

\begin{question}\label{Q1}
    Can we find algebraic cycles which yield a refinement of the motivic decomposition (\ref{motivic_DT}) based on supports of the decomposition theorem for $f: M_{n,d} \to B$?
\end{question}

The conjectural refinement of Question \ref{Q1} is mysterious for the following reasons. As we commented in Remark \ref{remark1.1}, if we consider the decomposition theorem for the $\mathfrak{D}$-twisted Hitchin system, the support theorem behaves very differently for $\mathfrak{D}=0$ and $\mathfrak{D}>0$. In particular, the decomposition theorem has full support when $\mathfrak{D}>0$. Therefore, the conjectural motives of Question \ref{Q1}, which lift the contribution of smaller supports of the decomposition theorem, should only exist for $\mathfrak{D} = 0$.

\subsection{Singular moduli spaces}
When the degree $d$ is not coprime to the rank $n$, the Hitchin moduli space $M_{n,d}$ is singular. Many properties concerning the decomposition theorem and the perverse filtration in the coprime setting are expected to be generalized to the non-coprime setting. We expect that there should exist algebraic cycles which induce these sheaf-theoretic structures. 

A challenging question in this direction is to construct relative \emph{BPS motives} for all $M_{n,d}$ without the coprime assumption. More precisely, from the perspective of cohomological Donaldson--Thomas theory the role played by the constant sheaf $\BQ_{n,d}$ should be replaced by a mysterious perverse sheaf, called the BPS sheaf
\[
\Phi_{\mathrm{BPS},d} \in \mathrm{Perv}(M_{n,d}),
\]
when $M_{n,d}$ is possibly singular. This perverse sheaf recovers the (shifted) constant sheaf $\BQ_{M_{n,d}}[\dim M_{n,d}]$ when $(n,d)=1$, but is in general more complicated than the intersection cohomology complex $\mathrm{IC}_{M_{n,d}}$; see \cite{Toda} or \cite[Section 0.4]{MS}. The sheaf-theoretic $\chi$-independence result (\ref{coh_chi}) is generalized by Kinjo--Koseki \cite{KK} using the BPS perverse sheaf:
\begin{equation}\label{chi_KK}
Rf_{d*} {\Phi_{\mathrm{BPS},d}} \simeq Rf_{d'*} \Phi_{\mathrm{BPS},d'}.
\end{equation}

\begin{question}
Can we lift the sheaf-theoretic $\chi$-independence identity (\ref{chi_KK}) to relative Chow motives over the Hitchin base $B$?
\end{question}
    
When $(n,d)\neq 1$, we have
\[
\Phi_{\mathrm{BPS},d} = \mathrm{IC}_{M_{n,d}} \oplus \varphi_d
\]
where $\varphi_d$ is a perverse sheaf supported on the singular locus of $M_{n,d}$. It would be interesting to see how the extra summand $\varphi_d$ is detected by algebraic cycles. The BPS motive which lifts $Rf_{d*} {\Phi_{\mathrm{BPS},d}}$ may be viewed as the motivic Gopakumar--Vafa invariant associated with the local Calabi--Yau 3-fold $X:= \mathrm{Tot}_\Sigma(\omega_\Sigma \oplus \CO_\Sigma)$.

\end{document}